\pgfplotsset{
tick label style = {font = \tiny},
legend style = {font = \scriptsize},
xlabel style={yshift=+0.5ex},
ylabel style={yshift=-0.5ex}
}
\newcommand\0{\boldsymbol{0}}
\newcommand{\eps}{\varepsilon}
\newcommand{\y}{\mathbf{y}}
\newcommand{\N}{\mathbb{N}}
\renewcommand{\P}{\mathbb{P}}
\newcommand{\R}{\mathbb{R}}
\newcommand{\V}{\boldsymbol{\mathbb{V}}}
\newcommand{\X}{\mathbb{X}}
\newcommand{\MM}{\mathcal{M}}
\newcommand{\NN}{\mathcal{N}}
\newcommand{\RR}{\mathcal{R}}
\renewcommand{\SS}{\mathcal{S}}
\newcommand{\TT}{\mathcal{T}}
\newcommand\III{\mathfrak{I}}
\newcommand\MMM{\mathfrak{M}}
\newcommand\PPP{\mathfrak{P}}
\newcommand\QQQ{\mathfrak{Q}}
\newcommand{\qsat}{q_{\mathrm{sat}}}
\DeclareMathOperator*{\level}{level}
\DeclareMathOperator*{\hull}{span}
\DeclareMathOperator*{\refine}{refine}
\DeclareMathOperator*{\supp}{supp}
\newcommand{\enorm}[3][]{#1|\!#1|\!#1|\,#2\,#1|\!#1|\!#1|_{#3}}
\newcommand{\bnorm}[2][]{#1|\!#1|\!#1|\,#2\,#1|\!#1|\!#1|}
\newcommand{\norm}[3][]{#1\|#2#1\|_{#3}}
\newcommand\Cest{C_{\rm est}}
\renewcommand\aa{\boldsymbol{a}}
\newcommand\ee{\boldsymbol{e}}
\newcommand\ff{\boldsymbol{f}}
\newcommand\uu{\boldsymbol{u}}
\newcommand\vv{\boldsymbol{v}}
\newcommand\ww{\boldsymbol{w}}
\newcommand\coarse{\bullet}
\newcommand\fine{\circ}
\renewcommand\d{\mathrm{d}}
\newcommand\sfA{\boldsymbol{\sf A}}
\newcommand\sfu{\boldsymbol{\sf u}}
\newcommand\sfb{\boldsymbol{\sf b}}
\newcommand\sfx{\boldsymbol{\sf x}}
\newcommand\CY{C_Y}
\newcommand\KY{K}
\newcommand\Cstab{C_{\rm stb}}
\newcommand\Cloc{C_{\rm loc}}
\newcommand{\dual}[3][]{#1\langle#2\,,\,#3#1\rangle_{D}}
\newcommand\G{\mathbb{G}}
\newcommand\deterministic{\star}
\newcommand\est{\tau}
\newcommand{\UU}{\mathcal{U}}
\def\set#1#2{\big\{#1 \,:\, #2\big\}}
\def\reff#1#2{\stackrel{\eqref{#1}}{#2}}
\def\refp#1#2{\stackrel{\phantom{\eqref{#1}}}{#2}}
\newtheorem{theorem}{Theorem}
\newtheorem{lemma}[theorem]{Lemma}
\newtheorem{algorithm}[theorem]{Algorithm}
\newtheorem{remark}[theorem]{Remark}
\newtheorem{marking}{Marking criterion}
\def\@seccntformat#1{%
  \protect\textup{\protect\@secnumfont
    \hspace*{5mm}\ifnum\pdfstrcmp{subsection}{#1}=0 \bfseries\fi
    \csname the#1\endcsname
    \protect\@secnumpunct
  }%
}
\title{Two-level a~posteriori error estimation for\\ adaptive multilevel stochastic Galerkin FEM}
\author{Alex Bespalov}
\address{School of Mathematics, University of Birmingham, Edgbaston, Birmingham B15 2TT, UK}
\email{a.bespalov@bham.ac.uk}
\author{Dirk Praetorius}
\address{Institute of Analysis and Scientific Computing, TU Wien, Wiedner Hauptstra\ss{}e~8--10, 1040 Vienna, Austria}
\email{dirk.praetorius@asc.tuwien.ac.at}
\author{Michele Ruggeri}
\address{Institute of Analysis and Scientific Computing, TU Wien, Wiedner Hauptstra\ss{}e~8--10, 1040 Vienna, Austria}
\email{michele.ruggeri@asc.tuwien.ac.at}
\date{\today}
\subjclass[2010]{35R60, 65C20, 65N15, 65N30, 65N50}
\keywords{adaptive methods, a~posteriori error analysis, two-level error estimation,
multilevel stochastic Galerkin method, finite element method, parametric PDEs}
\thanks{\emph{Acknowledgments.}
The work of the first author was supported by the EPSRC under grant EP/P013791/1
and by The Alan Turing Institute under the EPSRC grant EP/N510129/1.
The work of the second and third authors was supported by the Austrian Science Fund (FWF) under grants F65 and P33216.
The authors are grateful to David Silvester (University of Manchester) for useful discussions and
advice on the implementation of iterative solvers in multilevel stochastic Galerkin FEM}
\begin{document}
%%%%%%%%%%%%%%%%%%%%

%%%%%%%%%%%%%%%%%%%%
\begin{abstract}
The paper considers a class of parametric elliptic partial differential equations (PDEs),
where the coefficients and the right-hand side function depend on infinitely many (uncertain) parameters.
We introduce a two-level \textsl{a~posteriori} estimator to control the energy error in
multilevel stochastic Galerkin approximations for this class of PDE problems.
We prove that the two-level estimator always provides a lower bound for the unknown approximation error,
while the upper bound is equivalent to a saturation assumption.
We propose and empirically compare three adaptive algorithms,
where the structure of the estimator is exploited to perform spatial refinement as well as parametric enrichment.
The paper also discusses implementation aspects of computing multilevel stochastic Galerkin approximations.
\end{abstract}
%%%%%%%%%%%%%%%%%%%%

\maketitle
\thispagestyle{fancy}

%%%%%%%%%%%%%%%%%%%%
\section{Introduction} \label{sec:intro}
%%%%%%%%%%%%%%%%%%%%

%%%%%%%%%%%%%%%%%%%%
\subsection{Multilevel stochastic Galerkin FEM}
%%%%%%%%%%%%%%%%%%%%

The effective numerical solution of partial differential equations (PDEs) with uncertain or parameter-dependent inputs requires
non-trivial computational methods and efficient algorithms.
Stochastic Galerkin finite element methods (SGFEMs) provide a powerful alternative to traditional sampling techniques
for such problems, in particular, when the inputs and solutions are sufficiently smooth functions of parameters
(for comparison between SGFEM and popular sampling methods,
such as Monte Carlo and stochastic collocation finite element methods,
we refer to~\cite{gwz2014,bntt2011,gllmn2014}).
Appropriate construction of the underlying approximation spaces and adaptivity
are the keys to computationally efficient SGFEM implementations,
particularly in the case of inputs depending on infinitely many uncertain parameters.

Stochastic Galerkin approximations are typically represented in terms of a finite
generalized polynomial chaos (gPC) expansion
with spatial coefficients residing in finite element spaces.
If all spatial coefficients reside in the same finite element space,
the corresponding SGFEM approximation space is termed \emph{single-level} and its dimension
has a multiplicative representation
(i.e., the total number of degrees of freedom is equal to
the number of active terms in the gPC expansion multiplied by the dimension of the finite element space).
An alternative to this is a more flexible \emph{multilevel} construction,
where spatial gPC-coefficients may reside in different finite element spaces.
In this case, the dimension of the SGFEM approximation space admits an additive representation
(i.e., the total number of degrees of freedom is equal to the sum of dimensions of all involved finite element spaces).

\emph{Multilevel} SGFEMs have emerged in~\cite{CohenDeVoreSchwab10,cds11,Gittelson_13_CRM}.
These works have provided a theoretical benchmark for convergence analysis of the SGFEM.
In particular, under some assumptions on parametric inputs,
they have proved the existence of a sequence of multilevel approximation spaces
such that the errors in the associated Galerkin solutions converge to zero with an optimal rate
(i.e., with the rate of the chosen FEM for the corresponding parameter-free problem).
Practical realizations of adaptive algorithms that generate these sequences
of approximation spaces and Galerkin solutions have been developed in~\cite{egsz14}
and more recently in~\cite{cpb18+}.
While the predicted optimal convergence behavior of multilevel SGFEM approximations
has been observed numerically for parametric problems with spatially regular~\cite{cpb18+}
and spatially singular~\cite{egsz14} solutions,
a provable optimality result for the developed adaptive algorithms for multilevel SGFEMs has so far remained an open problem.

Multilevel approaches
based on hierarchies of spatial approximations have been studied
also for sampling methods.
Remaining within the context of the numerical approximation of elliptic PDEs with uncertain data,
we refer, e.g., to~\cite{cgst2011} for multilevel Monte Carlo (MLMC) methods,
to~\cite{kss2015} for multilevel quasi-Monte Carlo methods,
and to~\cite{tjwg2015} for multilevel stochastic collocation (MLSC) methods.
Adaptive strategies for MLMC and MLSC
have been developed
recently in~\cite{ky2018} and~\cite{LangSS20},~respectively.

%%%%%%%%%%%%%%%%%%%%
\subsection{Main contributions and outline of the paper} \label{sec:contribute}
%%%%%%%%%%%%%%%%%%%%

In this paper, we consider the same parametric model problem as studied in the above cited works~\cite{egsz14,cpb18+}
(among very many other works)---the steady-state diffusion equation with a spatially varying coefficient
that has affine dependence on infinitely many parameters.

For the numerical solution of this problem,
we propose an adaptive algorithm
that iterates the following loop of four modules:
\begin{equation*}
{\sf SOLVE} \longrightarrow {\sf ESTIMATE}  \longrightarrow {\sf MARK}  \longrightarrow {\sf REFINE}
\end{equation*}
(see Algorithm~\ref{algorithm} below).
Let us briefly describe each of these modules emphasizing their specific features pertinent
to the multilevel SGFEM.

$\bullet$ \textsf{SOLVE}:
In this module,
the multilevel SGFEM approximation
is computed as a finite gPC expansion with coefficients in the current set of finite element spaces.
One of the challenges in implementing multilevel SGFEMs is the efficient computation
of nonsquare stiffness matrices associated with two different finite element meshes.
The existing implementations either rely on projection techniques to compute these stiffness matrices approximately
(see~\cite{egsz14,alea}) or restrict themselves to spatial discretizations on nested uniform meshes (see~\cite{cpb18+}).
In this paper, we propose an effective procedure for \emph{direct} computation of nonsquare stiffness matrices
for a pair of general, not necessarily nested meshes
obtained from the same coarse mesh by finitely many steps of a fixed mesh refinement rule
(in our case, newest vertex bisection).
SGFEMs give rise to very large linear systems
with block structure.
Solving such linear systems numerically is a non-trivial task that stimulated
the development and analysis of iterative solvers,
preconditioning strategies, and low-rank approximation techniques;
see, e.g., \cite{ghanemkruger96,elmanpowell2009,ullmann2010,sg2014,ehlmw2014,dklm2015,bly2021}.
In order to solve the linear systems arising in the multilevel SGFEM,
we use a bespoke implementation of the Minimum Residual method from~\cite{ss11}
with the mean-based preconditioner from~\cite{ghanemkruger96,elmanpowell2009}.

$\bullet$ \textsf{ESTIMATE}:
In this module, the error between the (unknown) exact solution and the multilevel SGFEM approximation
is estimated by suitable error indicators.
The \textsl{a~posteriori} error estimation in multilevel SGFEMs has been addressed in~\cite{egsz14,cpb18+}.
While explicit residual-based error estimators are employed in~\cite{egsz14},
hierarchical-type error estimators are analyzed in~\cite{cpb18+}.
Building on the ideas in our recent works for a single-level SGFEM~\cite{bprr18+,bprr18++},
in this paper,
we propose a novel \textsl{a~posteriori} error estimation strategy for multilevel SGFEM approximations.
The estimator, which combines a two-level spatial estimator and a hierarchical parametric estimator,
allows to estimate
the error contributions from finite element discretizations in the physical domain
and those from the approximation (obtained via truncation) in the parameter domain independently from each other.
We prove that the combined error estimator is always efficient,
i.e., up to a multiplicative constant, it provides a lower bound for the energy error,
whereas its reliability (i.e., the upper bound for the error) is equivalent to a saturation assumption
(see subsection~\ref{sec:saturation} below).
This choice of the error estimation strategy is motivated by a recent success in proving
optimal convergence rates for adaptive algorithms for deterministic problems;
see~\cite{PRS20}.
Thus, we see our \textsl{a~posteriori} error analysis in this paper as an important step towards proving
the optimality result for adaptive multilevel SGFEM approximations by extending the methodology
developed in~\cite{PRS20} to the parametric~setting.

$\bullet$ \textsf{MARK}:
In this module,
some of the spatial and parametric components of the current multilevel SGFEM approximation
are selected for refinement
by assessing the values of the error indicators computed in the module \textsf{ESTIMATE}.
The application of the module \textsf{MARK} highlights key differences between adaptive multilevel SGFEM and
multilevel sampling methods (such as MLMC and MLSC).
The latter methods typically require the number of active parameters in approximations to be fixed \textsl{a priori},
and the balance between
the spatial error (e.g., due to finite element discretization) and
the parametric error (e.g., due to Monte Carlo sampling or high-dimensional polynomial interpolation)
is achieved by employing \textsl{a priori} bounds for spatial errors and by minimizing the cost functional
(see, e.g.,~\cite[section~3.4]{gwz2014} for MLMC and~\cite{tjwg2015} for MLSC).
Adaptive SGFEM algorithms are fundamentally different.
Firstly, they require no sampling.
Secondly,
the selection of active parameters, the truncation of the gPC expansion,
and the balance between
spatial and parametric components of approximation errors
are performed \emph{automatically}
using \textsl{a~posteriori} error indicators and
the adopted marking criterion.
The choice of the marking criterion is critical.
In this work, we propose three different marking strategies, all based
on the bulk-chasing criterion proposed in the deterministic setting by D\"orfler \cite{doerfler}.
In addition to two standard marking criteria that lead to separate refinement of either spatial or parametric components
at each iteration of the adaptive loop (see, e.g., \cite{egsz14,egsz15,bs16,bprr18++,cpb18+}),
we also exploit the multilevel structure of the approximation space and perform a \emph{combined} refinement at each
iteration by employing D\"orfler marking on the joint set of spatial and parametric error indicators.
While combined refinement is prohibitively expensive for single-level SGFEM
(because of the multiplicative dependence of the dimension of the discrete space on the number of
active terms in the gPC expansion),
we stress that multilevel SGFEM allows for combined refinement and our experiments indicate optimal convergence behavior.

$\bullet$ \textsf{REFINE}:
In this module,
the finite-dimensional space for computing the next multilevel SGFEM approximation is generated by
enriching the current finite-dimensional space with
the spatial and parametric components selected in
the module \textsf{MARK}.
Specifically, (i) the finite element spaces are enriched by refining
all marked elements of the current spatial meshes;
and (ii) new terms are added to the gPC expansion.

The paper is organized as follows.
Section~\ref{sec:problem} introduces the model parametric problem and its weak formulation.
In section~\ref{sec:mlsgfem}, we describe the main ingredients of the multilevel SGFEM discretization,
introduce the multilevel approximation space, and define the corresponding Galerkin solution.
Section~\ref{sec:aposteriori} is focused on the \textsl{a~posteriori} error analysis of multilevel SGFEM approximations
and includes the main theoretical result of this paper (Theorem~\ref{thm:estimator}).
Adaptive algorithms with three different marking criteria are formulated in section~\ref{sec:algorithms},
whereas implementation aspects of computing multilevel SGFEM approximations are discussed in section~\ref{sec:implement}.
The effectiveness of our error estimation strategy and the performance of the proposed adaptive algorithms
are assessed in a series of numerical experiments presented in section~\ref{sec:numerics}.

%%%%%%%%%%%%%%%%%%%%
\section{Problem formulation} \label{sec:problem}
%%%%%%%%%%%%%%%%%%%%

Let $D \subset \R^d$ ($d = 2, 3$) be a bounded Lipschitz domain with polytopal boundary $\partial D$
and let $\Gamma := \prod_{m=1}^\infty [-1,1]$ denote the infinitely-dimensional hypercube.
We consider the elliptic boundary value problem
\begin{equation} \label{eq:strongform}
\begin{aligned}
 -\nabla \cdot (\aa \nabla \uu) &= \ff \quad &&\text{in } D \times \Gamma,\\
 \uu & = 0 \quad &&\text{on } \partial D \times \Gamma,
\end{aligned}
\end{equation}
where the scalar coefficient $\aa$ and the right-hand side function $\ff$ (and, hence, the solution~$\uu$)
depend on a countably infinite number of scalar parameters, i.e.,
$\aa = \aa(x, \y)$, $\ff = \ff(x, \y)$, and $\uu = \uu(x, \y)$ with $x \in D$ and $\y = (y_m)_{m\in\N} \in \Gamma$.
For the coefficient $\aa$, we assume linear dependence on the parameters, i.e.,
\begin{equation} \label{eq1:a}
\aa(x,\y) = a_0(x) + \sum_{m=1}^\infty y_m a_m(x)
\quad \text{for all } x \in D \text{ and } \y \in \Gamma.
\end{equation}
We assume that
$\ff \,{\in}\, {L^2_\pi(\Gamma;H^{-1}(D))}$,
where $\pi \,{=}\, \pi(\y)$ is a measure on $(\Gamma,\mathcal{B}(\Gamma))$ with
$\mathcal{B}(\Gamma)$ being the Borel $\sigma$-algebra on $\Gamma$.
We assume that $\pi(\y)$ is the product of symmetric Borel probability measures $\pi_m$ on~$[-1,1]$,
i.e., $\pi(\y) = \prod_{m=1}^\infty \pi_m(y_m)$.

For each $m \in \N_0$, the scalar functions $a_m \in L^{\infty}(D)$
in~\eqref{eq1:a} are required to satisfy the following inequalities (cf.~\cite[Section~2.3]{sg11}):
\begin{gather}
\label{eq2:a}
 0 < a_0^{\rm min} \le a_0(x) \le a_0^{\rm max} < \infty
 \quad \text{for almost all } x \in D,\\
\label{eq3:a}
 \tau := \frac{1}{a_0^{\rm min}} \, \bigg\| \sum_{m=1}^\infty |a_m| \bigg\|_{L^\infty(D)} < 1
 \text{\ \ and\ \ }
 \sum_{m=1}^\infty \norm{a_m}{L^\infty(D)} < \infty.
\end{gather}
With the Sobolev space $\X := H^1_0(D)$, consider the Bochner space $\V := L^2_\pi(\Gamma;\X)$.
Define the following bilinear forms on $\V$:
\begin{align}\label{def:B0}
 B_0(\uu,\vv) &:= \int_\Gamma \int_D a_0(x) \nabla \uu(x,\y) \cdot \nabla \vv(x,\y) \, \d{x} \, \d{\pi(\y)},
 \\\label{def:B}
 B(\uu,\vv) &:= B_0(\uu,\vv)
 + \sum_{m=1}^\infty \int_\Gamma \int_D y_ma_m(x) \nabla \uu(x,\y) \cdot \nabla \vv(x,\y) \, \d{x} \, \d{\pi(\y)}.
\end{align}
An elementary computation shows that assumptions~\eqref{eq1:a}--\eqref{eq3:a} ensure that the bilinear 
forms $B_0(\cdot,\cdot)$ and $B(\cdot,\cdot)$ are symmetric, continuous, and elliptic on $\V$.
Let $\bnorm{\cdot}$ (resp., $\enorm{\cdot}{0}$) denote the norm induced by $B(\cdot,\cdot)$ (resp., $B_0(\cdot,\cdot)$).
Then, there holds
\begin{equation}\label{eq:lambda}
 \lambda \, \enorm{\vv}{0}^2 \le \bnorm{\vv}^2 \le \Lambda \, \enorm{\vv}{0}^2
 \quad \text{for all } \vv \in \V,
\end{equation}
where
$\lambda := 1-\tau$ and $\Lambda := 1+\tau$. Note that $0 < \lambda < 1 < \Lambda < 2$.

The parametric problem~\eqref{eq:strongform} is understood in the weak sense:
Given $\ff \,{\in}\, L^2_\pi(\Gamma;H^{-1}(D))$, find $\uu \in \V$ such that
\begin{equation} \label{eq:weakform}
B(\uu,\vv) = F(\vv) := \int_\Gamma \int_D \ff(x,\y) \vv(x,\y) \, \d{x} \, \d{\pi(\y)}
\quad \text{for all } \vv \in \V.
\end{equation}
The existence and uniqueness of the solution $\uu \in \V$ to~\eqref{eq:weakform} follow by the Riesz theorem.

%%%%%%%%%%%%%%%%%%%%
\section{Multilevel stochastic Galerkin FEM discretization} \label{sec:mlsgfem}
%%%%%%%%%%%%%%%%%%%%

The weak formulation~\eqref{eq:weakform} is discretized by constructing a finite-dimensional subspace
$\V_\coarse \subset \V$ and using the Galerkin projection onto $\V_\coarse$.
In the spirit of~\cite{CohenDeVoreSchwab10,Gittelson_13_CRM,egsz14,cpb18+},
this work considers approximation spaces $\V_\coarse$ with a \emph{multilevel} structure.
Specifically, these spaces are constructed from tensor products of different finite element subspaces of $H^1_0(D)$
and multivariable polynomial spaces on $\Gamma$.
We describe each of these ingredients in the next two subsections.

%%%%%%%%%%%%%%%%%%%%
\subsection{Finite element spaces and mesh refinement} \label{sec:spatial:approx}
%%%%%%%%%%%%%%%%%%%%

Let $\TT_\coarse$ be a \emph{mesh}, i.e., a conforming triangulation of $D$ into
compact non-degenerate simplices $T \in \TT_\coarse$ (i.e., triangles for $d = 2$)
and denote by $\NN_\coarse$ the set of vertices of $\TT_\coarse$. 

We consider the space of continuous piecewise linear finite elements
\begin{equation*}
 \X_\coarse := \SS^1_0(\TT_\coarse) := \{v_\coarse \in \X : v_\coarse \vert_T \text{ is affine for all } T \in \TT_\coarse \} \subset \X = H^1_0(D).
\end{equation*}
For $z \in \NN_\coarse$, let $\varphi_{\coarse,z}$
be the associated hat function, i.e., $\varphi_{\coarse,z}$ is piecewise affine, globally continuous,
and satisfies the Kronecker property $\varphi_{\coarse,z}(z') = \delta_{zz'}$ for all $z' \in \NN_\coarse$.
Recall that $\{ \varphi_{\coarse,z} : z \in \NN_\coarse \setminus \partial D \}$ is the standard basis of $\X_\coarse$.

For mesh refinement, we employ newest vertex bisection (NVB);
see, e.g., \cite{stevenson,kpp}.
We assume that any mesh $\TT_\coarse$ employed for the spatial discretization
can be obtained by applying NVB refinement(s) to a given initial (coarse) mesh $\TT_0$.
In particular, we denote by $\refine(\TT_0)$ the set of all meshes obtained from $\TT_0$
by finitely many steps of refinement.

For a given mesh $\TT_\coarse \in \refine(\TT_0)$,
let $\widehat\TT_\coarse$ be the coarsest NVB refinement of $\TT_\coarse$ such that:
(i) for $d=2$, all edges of $\TT_\coarse$ have been bisected once
(which corresponds to uniform refinement of all elements by three bisections);
(ii) for $d = 3$, all faces contain an interior node
(we refer to~\cite{egp18+} for further discussion).
Then, $\widehat\NN_\coarse$ denotes the set of vertices of $\widehat\TT_\coarse$,
and $\NN_\coarse^+ := (\widehat\NN_\coarse \setminus \NN_\coarse) \setminus \partial D$
is the set of new interior vertices created by this refinement of $\TT_\coarse$.

For a set of marked vertices $\MM_\coarse \subseteq \NN_\coarse^+$,
let $\TT_\fine := \refine(\TT_\coarse,\MM_\coarse)$ be the coarsest NVB refinement of $\TT_\coarse$ such that
$\MM_\coarse \subset \NN_\fine$, i.e., all marked vertices are vertices of $\TT_\fine$.
Since NVB is a binary refinement rule, this implies that $\NN_\coarse \subseteq \NN_\fine \subseteq \widehat\NN_\coarse$ and $(\NN_\fine \setminus \NN_\coarse) \setminus \partial D = \NN_\coarse^+ \cap \NN_\fine$.
In particular, the choices $\MM_\coarse = \emptyset$ and $\MM_\coarse = \NN_\coarse^+$ lead to the meshes
$\TT_\coarse = \refine(\TT_\coarse,\emptyset)$ and $\widehat\TT_\coarse = \refine(\TT_\coarse,\NN_\coarse^+)$, respectively. 

The finite element space associated with $\widehat\TT_\coarse$ is denoted by
$\widehat\X_\coarse := \SS^1_0(\widehat\TT_\coarse)$,
and
$\{ \widehat\varphi_{\coarse,z} : z \in \widehat\NN_\coarse \setminus \partial D \}$ is the corresponding basis of hat functions.
Later, we will exploit the ($H^1$-stable) two-level decomposition
$\widehat\X_\coarse = \X_\coarse \oplus \hull\{ \widehat\varphi_{\coarse,z} : z \in \NN_\coarse^+ \}$.

We note that there exist two constants $K,K' \ge 1$ depending only on the initial mesh $\TT_0$
such that
\begin{equation} \label{eq:const:K}
   \# \set{z \in \NN_{\coarse}^+ }{|T \cap \supp(\widehat\varphi_{\coarse,z})|>0} \le K < \infty
   \quad \hbox{for all $T \in \TT_{\coarse}$}
\end{equation}
and
\begin{equation} \label{eq:const:K'}
   \# \{T \in \TT_{\coarse} : |T \cap \supp(\widehat\varphi_{\coarse,z})|>0 \} \le K' < \infty 
   \quad \hbox{for all $z \in \NN_{\coarse}^+$},
\end{equation}
with $K = 3$ and $K' = 2$ for $d = 2$.

%%%%%%%%%%%%%%%%%%%%
\subsection{Polynomial spaces on $\Gamma$ and parametric enrichment} \label{sec:param:approx}
%%%%%%%%%%%%%%%%%%%%

First, we introduce the polynomial spaces on~$\Gamma$.
For each $m \in \N$, let $(P_n^m)_{n\in\N_0}$ denote the sequence of univariate polynomials
which are orthogonal with respect to $\pi_m$ such that $P_n^m$ is a polynomial of degree
$n \in \N_0$ with $\norm{P_n^m}{L^2_{\pi_m}(-1,1)}=1$ and $P_0^m \equiv 1$.
For convenience, we also define $P_{-1}^m \equiv 0$ and, for each $n \in \N_0 \cup \{ -1 \}$,
we denote by $c_n^m$ the leading coefficient of $P_n^m$.
It is well-known that $\{P_n^m : n\in\N_0 \}$ is an orthonormal basis of $L^2_{\pi _m}(-1,1)$.
Moreover, there holds the three-term recurrence formula
\begin{equation}\label{eq:recursion}
\beta_n^m \, P_{n+1}^m(y_m)
= y_m \, P_n^m(y_m) - \beta_{n-1}^m \, P_{n-1}^m(y_m)
\quad \text{for all } y_m \in [-1,1] \text{ and } n\in\N_0,
\end{equation}
where $\beta_{n-1}^m = c_{n-1}^m / c_n^m$.
With $\N_0^\N := \{\nu = (\nu_m)_{m\in\N} : \nu_m\in\N_0 \text{ for all } m\in\N \}$
and $\supp(\nu):= \{m\in\N : \nu_m\neq 0 \}$, let $ \III := \{\nu \in \N_0^{\N} : \#\supp(\nu) < \infty \}$
be the set of all finitely supported multi-indices.
Note that $\III$ is countable. With
\begin{equation*}
P_\nu(\y)
:= \prod_{m\in\N} P_{\nu_m}^m(y_m)
= \prod_{m\in\supp(\nu)} P_{\nu_m}^m(y_m)
\quad \text{for all } \nu \in \III \text{ and all } \y \in \Gamma,
\end{equation*}
the set $\{ P_\nu : \nu\in\III \}$ is an orthonormal basis of $\P := L^2_\pi(\Gamma)$; see~\cite[Theorem~2.12]{sg11}.

For any $m \in \N$, let $\eps_m \in \III$ be the $m$-th unit sequence, i.e., $(\eps_m)_i = \delta_{mi}$ for all $i \in \N$.
A consequence of the three-term recurrence formula~\eqref{eq:recursion} is the identity
\begin{equation} \label{eq:3005:three-term}
y_m P_\mu(\y) = \beta_{\mu_m}^m P_{\mu+\eps_m}(\y) + \beta_{\mu_m-1}^m P_{\mu-\eps_m}(\y)
\quad \text{for all } \mu \in \III, \text{ } \y \in \Gamma, \text{ and } m \in \N.
\end{equation}

Note that the Bochner space $\V = L^2_\pi(\Gamma;\X)$ is isometrically isomorphic to $\X \otimes \P$ and
each function $\vv \in \V$ can be represented in the form
\begin{equation}\label{eq:representation}
 \vv(x,\y) = \sum_{\nu \in \III} v_\nu(x) P_\nu(\y)
 \quad\text{with unique coefficients}\
 v_\nu \in \X.
\end{equation}
Moreover, there holds (see, e.g.,~\cite[Lemma~2.1]{bprr18+})
\begin{equation} \label{eq1:lemma:orthogonal}
B_0(\vv, \ww) = \sum_{\nu \in \III} \int_D a_0(x) \, \nabla v_\nu(x) \cdot \nabla w_\nu(x) \, \d{x}
\quad \text{for all } \vv, \ww \in \V
\end{equation}
and, in particular,
\begin{equation}
\label{eq2:lemma:orthogonal}
\enorm{\vv}{0}^2 
= \sum_{\nu \in \III} \enorm{v_\nu P_\nu}{0}^2
= \sum_{\nu \in \III} \norm{a_0^{1/2}\nabla v_\nu}{L^2(D)}^2
\quad \text{for all } \vv \in \V.
\end{equation}

Let $\0 = (0,0,\dots)$ denote the zero index,
and let $\PPP_\coarse \subset \III$ be a finite index set such that $\0 \in \PPP_\coarse$.
We denote by $\supp(\PPP_\coarse) := \bigcup_{\nu \in \PPP_\coarse} \supp(\nu)$ the set of active parameters in~$\PPP_\coarse$.
Turning now to the parametric enrichment,
we follow the same construction as in~\cite{bs16,br18,bprr18+,bprr18++}.
For a fixed $\overline{M} \in \N$, we consider the \emph{detail index~set}
\begin{equation} \label{def:Q}
   \QQQ_\coarse := \{ \mu \in \III \setminus \PPP_\coarse : \mu = \nu \pm \eps_m
                                 \text{ for all } \nu \in \PPP_\coarse \text{ and all } m = 1,\dots, M_{\PPP_\coarse} + \overline{M}
                             \},
\end{equation}
where $M_{\PPP_\coarse} := \#\supp(\PPP_\coarse) \in \N_0$ is the number of active parameters in the index set $\PPP_\coarse$.
Thus, for a given $\PPP_\coarse \subset \III$, the detail index set represents an ``active boundary'' of $\PPP_\coarse$
that contains multi-indices having up to $M_{\PPP_\coarse} + \overline{M}$ active parameters.
Then, a parametric enrichment is obtained by adding some marked indices $\MMM_\coarse \subseteq \QQQ_\coarse$
to the current index set $\PPP_\coarse$, i.e.,
$\PPP_\coarse \subseteq \PPP_\fine := \PPP_\coarse \cup \MMM_\coarse \subseteq \PPP_\coarse \cup \QQQ_\coarse$. 

%%%%%%%%%%%%%%%%%%%%
\subsection{Multilevel approximation spaces}
%%%%%%%%%%%%%%%%%%%%

For each index $\nu \in \PPP_\coarse$, let $\TT_{\coarse\nu} \in \refine(\TT_0)$ be a mesh and $\X_{\coarse\nu} := \SS^1_0(\TT_{\coarse\nu})$ be the corresponding finite element space.
Furthermore, for all indices $\nu \in \III \backslash \PPP_\coarse$, we set $\TT_{\coarse\nu} := \TT_0$.
Following~\cite{egsz14}, our discretization of~\eqref{eq:weakform} is based on the finite-dimensional subspace
\begin{equation}\label{eq:def:V}
 \V_\coarse := \bigoplus_{\nu \in \PPP_\coarse} \V_{\coarse\nu} \subset \V
 \quad \text{with} \quad
 \V_{\coarse\nu} := \X_{\coarse\nu} \otimes {\rm span}\{P_\nu\} = {\rm span}\set{\varphi_{\coarse\nu,z} P_\nu}{z \in \NN_{\coarse\nu}}.
\end{equation}
Note that the sum of the spaces $\V_{\coarse\nu}$ in~\eqref{eq:def:V} is orthogonal and hence direct.
We emphasize that, in contrast to~\cite{egsz15,bprr18+,bprr18++}, where
$\X_{\coarse\nu} = \X_{\coarse\mu} =: \X_\coarse$  for all $\nu, \mu \in \PPP_\coarse$
and, hence, $\V_\coarse = \X_\coarse \otimes {\rm span}\set{P_\nu}{\nu \in \PPP_\coarse}$ has the tensor product structure
(the so-called \emph{single-level} approximation space),
the approximation space $\V_\coarse$ defined in~\eqref{eq:def:V} has a \emph{multilevel} structure that
allows $\X_{\coarse\nu} \neq \X_{\coarse\mu}$ for $\mu \neq \nu$.
Furthermore, while each mesh $\TT_{\coarse\nu}$ ($\nu \in \PPP_\coarse$) 
is obtained by a local refinement of the same coarse mesh~$\TT_0$,
any two meshes $\TT_{\coarse\nu},\, \TT_{\coarse\mu}$ ($\nu,\,\mu \in \PPP_\coarse$) are not necessarily nested.
This is a more general construction than that considered in~\cite{cpb18+},
where the meshes $\TT_{\coarse\nu},\, \TT_{\coarse\mu}$ ($\nu \neq \mu$) were assumed to be nested.

The Galerkin discretization of~\eqref{eq:weakform} reads as follows:
Find $\uu_\coarse \in \V_\coarse$ such that
\begin{equation}\label{eq:discrete_formulation}
 B(\uu_\coarse, \vv_\coarse) = F(\vv_\coarse) 
 \quad \text{for all } \vv_\coarse \in \V_\coarse. 
\end{equation}
Again, the Riesz theorem proves the existence and uniqueness of the solution $\uu_\coarse \in \V_\coarse$.
Moreover, 
the mapping $\V \ni \uu \mapsto \uu_\coarse \in \V_\coarse$ is the orthogonal projection
onto $\V_\coarse$
with respect to the bilinear form $B(\cdot,\cdot)$.
Therefore, there holds the best approximation property
\begin{equation*}
 \bnorm{\uu - \uu_\coarse} = \min_{\vv_\coarse \in \V_\coarse} \bnorm{\uu - \vv_\coarse}.
\end{equation*}

%%%%%%%%%%%%%%%%%%%%
\section{A~posteriori error estimation} \label{sec:aposteriori}
%%%%%%%%%%%%%%%%%%%%

%%%%%%%%%%%%%%%%%%%%
\subsection{Saturation assumption} \label{sec:saturation}
%%%%%%%%%%%%%%%%%%%%

Given a multilevel subspace $\V_\coarse$ from~\eqref{eq:def:V},
we adopt the approach of~\cite[Remark~4.3]{bs16} and consider an enriched subspace
$\widehat\V_\coarse \supseteq \V_\coarse$ defined~as
\begin{equation}\label{eq:def:Vhat}
 \widehat\V_\coarse 
 := \bigoplus_{\nu \in \PPP_\coarse} \big[ \widehat\X_{\coarse\nu} \otimes {\rm span}\{P_\nu\} \big]
 \oplus \bigoplus_{\nu \in \QQQ_\coarse} \big[ \X_0 \otimes {\rm span}\{P_\nu\} \big]
 \subset \V,
\end{equation}
where we recall that $\TT_{\coarse\nu} = \TT_0$ for all $\nu \in \QQQ_\coarse \subset \III \backslash \PPP_\coarse$.
Note that $\V_\coarse \subseteq \V_\fine \subseteq \widehat\V_\coarse$,
where $\V_\fine$ is obtained from $\V_\coarse$ by \emph{one step} of (adaptive) refinement/enrichment, i.e.,
$\V_\fine$ is represented in the form~\eqref{eq:def:V} with
\begin{subequations} \label{eq:V:fine}
\begin{gather}
      \PPP_\circ = \PPP_\coarse \cup \MMM_\coarse \subseteq \PPP_\coarse \cup \QQQ_\coarse,
      \\
      \TT_{\fine\nu} = \refine(\TT_{\coarse\nu},\MM_{\coarse\nu}) \text{\ for all } \nu \in \PPP_\coarse \text{ \ and \ }
      \TT_{\fine\nu} = \TT_0 \text{\ for all } \nu \in \PPP_\fine\backslash \PPP_\coarse.
\end{gather}
\end{subequations}

Let $\widehat \uu_\coarse \in \widehat\V_\coarse$ be the unique Galerkin solution to
\begin{equation}\label{eq:discrete_formulation-hat}
 B(\widehat \uu_\coarse, \widehat \vv_\coarse) = F(\widehat \vv_\coarse) 
 \quad \text{for all } \widehat \vv_\coarse \in \widehat\V_\coarse. 
\end{equation}
Existence and uniqueness of the solution $\widehat \uu_\coarse \in \widehat\V_\coarse$ follow from the Riesz theorem.
We emphasize that $\widehat \uu_\coarse \in \widehat\V_\coarse$
is only needed for analysis and will not be computed throughout.

We suppose that there exists a uniform constant $0 < \qsat < 1$ such that the following saturation assumption holds:
\begin{equation}\label{eq:saturation}
 \bnorm{\uu - \widehat \uu_\coarse} \le \qsat \, \bnorm{\uu - \uu_\coarse}.
\end{equation}
We recall the orthogonal decomposition
\begin{equation*}
 \bnorm{\uu - \widehat \uu_\coarse}^2 + \bnorm{\widehat \uu_\coarse - \uu_\coarse}^2
 = \bnorm{\uu - \uu_\coarse}^2.
\end{equation*}
Elementary calculation thus proves that the saturation assumption~\eqref{eq:saturation} is equivalent to
\begin{equation}\label{eq2:saturation}
 \bnorm{\widehat \uu_\coarse - \uu_\coarse}^2 
 \le \bnorm{\uu - \uu_\coarse}^2 
 \le \frac{1}{1-\qsat^2} \, \bnorm{\widehat \uu_\coarse - \uu_\coarse}^2,
\end{equation}
i.e., the Galerkin error $\bnorm{\uu - \uu_\coarse}$ of the (computed) coarse-space solution $\uu_\coarse \in \V_\coarse$ is equivalent to the error reduction $\bnorm{\widehat \uu_\coarse - \uu_\coarse}$ with respect to the (non-computed) fine-space solution $\widehat \uu_\coarse \in \widehat\V_\coarse$.

%%%%%%%%%%%%%%%%%%%%
\begin{remark}
The saturation assumption~\eqref{eq:saturation} is a strong restriction (which may even fail in general~\cite{bek96}),
if required for all discrete subspaces $\V_\coarse$.
In practice, however, it is only required for the sequence of nested discrete subspaces
generated by an adaptive solution process.
\end{remark}
%%%%%%%%%%%%%%%%%%%%

%%%%%%%%%%%%%%%%%%%%
\subsection{\textsl{A~posteriori} error estimator. Main result}
%%%%%%%%%%%%%%%%%%%%

The error in multilevel stochastic Galerkin approximations has two principal components:
the \emph{parametric} error arising from the choice of the index set~$\PPP_\coarse$
and the \emph{spatial} error due to finite element discretizations for each $\nu \in \PPP_\coarse$.
We estimate the contributions to the error from each of these two components separately.
To abbreviate notation, let $\dual{w}{v} := \int_D a_0\nabla w \cdot \nabla v \d{x}$ be the energy scalar product
on the space $\X = H^1_0(D)$ in the physical domain
and let $\norm{\cdot}{D} := \norm{a_0^{1/2}\nabla(\cdot)}{L^2(D)}$ be the induced energy norm on $\X$.

The \emph{parametric} error is estimated by means of hierarchical error indicators
(cf.~\cite{bps14,bs16})
\begin{subequations} \label{eq1:parametric-error-estimate}
\begin{equation} \label{eq1:parametric-error-estimate:a}
 \est_\coarse(\nu) := \norm{e_{\coarse\nu}}{D}
 \quad \text{for all } \nu \in \QQQ_\coarse, 
\end{equation}
where $e_{\coarse\nu} \in \X_0$ is the unique solution of
\begin{equation} \label{eq1:parametric-error-estimate:b}
 \dual{e_{\coarse\nu}}{v_0}
 = F(v_0 P_\nu) - B(\uu_\coarse, v_0 P_\nu) 
  \quad \text{for all } v_0  \in \X_0.
\end{equation}
\end{subequations}

In order to estimate the errors due to \emph{spatial} discretizations,
we employ the two-level error estimation strategy, which has been analyzed in~\cite{bprr18+} for single-level approximation spaces.
Specifically, we define the two-level error indicators
\begin{equation}\label{eq1:spatial-error-estimate}
 \est_{\coarse}(\nu,z)
 := \frac{|F(\widehat\varphi_{\coarse\nu,z} P_\nu) - B(\uu_\coarse,\widehat\varphi_{\coarse\nu,z} P_\nu)|}{\norm{\widehat\varphi_{\coarse\nu,z}}{D}}
 \quad \text{for all } \nu \in \PPP_\coarse \text{ and all } z \in \NN_{\coarse\nu}^+.
\end{equation}

Overall, we thus consider the computable \textsl{a~posteriori} error estimate
\begin{equation}\label{eq:def:tau}
 \est_\coarse 
 := \bigg( \sum_{\nu \in \PPP_\coarse}\sum_{z \in \NN_{\coarse\nu}^+} \est_{\coarse}(\nu,z)^2 
 + \sum_{\nu \in \QQQ_\coarse} \est_\coarse(\nu)^2  \bigg)^{1/2}.
\end{equation}
The following theorem is the main theoretical result of this work.

%%%%%%%%%%%%%%%%%%%%
\begin{theorem}\label{thm:estimator}
Let $\V_\coarse$ be a given multilevel approximation space~\eqref{eq:def:V},
and let $\widehat\V_\coarse$ be the enriched space as defined in~\eqref{eq:def:Vhat}.
Then,
for two Galerkin approximations $\uu_\coarse \in \V_\coarse$ and $\widehat \uu_\coarse \in \widehat\V_\coarse$
satisfying~\eqref{eq:discrete_formulation} and~\eqref{eq:discrete_formulation-hat}, respectively, there holds
\begin{equation}\label{eq1:thm:estimator}
 \Cest^{-1} \, \bnorm{\widehat \uu_\coarse - \uu_\coarse}
 \le \est_\coarse \reff{eq:def:tau}= \bigg( \sum_{\nu \in \PPP_\coarse}\sum_{z \in \NN_{\coarse\nu}^+} \est_{\coarse}(\nu,z)^2 
 + \sum_{\nu \in \QQQ_\coarse} \est_\coarse(\nu)^2 \bigg)^{1/2}
 \le \Cest \, \bnorm{\widehat \uu_\coarse - \uu_\coarse} . 
\end{equation}
Furthermore, if $\uu \in \V$ is the solution to problem~\eqref{eq:weakform}, then,
under the saturation assumption~\eqref{eq:saturation}, the estimates~\eqref{eq1:thm:estimator} are equivalent to
\begin{equation}\label{eq2:thm:estimator}
 \frac{(1-\qsat^2)^{1/2}}{\Cest} \, \bnorm{\uu - \uu_\coarse}
 \le \est_\coarse 
 \le \Cest \, \bnorm{\uu - \uu_\coarse},
\end{equation}
i.e., the proposed error estimator is reliable (under the saturation assumption) and (always) efficient.
The constant $\Cest \ge 1$ in~\eqref{eq1:thm:estimator}--\eqref{eq2:thm:estimator}
is generic and depends only on uniform shape regularity of the 
refinements of $\TT_0$,
the mean field $a_0$, and the constant $\tau > 0$ from~\eqref{eq3:a}.
\end{theorem}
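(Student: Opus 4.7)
The plan is to reduce the analysis to the $B_0$-bilinear form, which decouples across the gPC basis, via the spectral equivalence~\eqref{eq:lambda}, and then to treat the spatial and parametric slices of $\widehat\uu_\coarse - \uu_\coarse$ separately: the former by the deterministic two-level theory from~\cite{bprr18+} and the latter by the hierarchical indicator idea of~\cite{bs16}.

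First I note that~\eqref{eq2:thm:estimator} follows immediately from~\eqref{eq1:thm:estimator} combined with the equivalence~\eqref{eq2:saturation}, so the entire proof reduces to~\eqref{eq1:thm:estimator}. To this end, I introduce the auxiliary error $\widetilde\ee \in \widehat\V_\coarse$ as the unique solution of
\begin{equation*}
 B_0(\widetilde\ee, \widehat\vv_\coarse) = F(\widehat\vv_\coarse) - B(\uu_\coarse, \widehat\vv_\coarse)
 \qquad \text{for all } \widehat\vv_\coarse \in \widehat\V_\coarse,
\end{equation*}
whose well-posedness follows from the $B_0$-ellipticity on $\widehat\V_\coarse$, and I exploit the Galerkin identity $B(\widehat\uu_\coarse - \uu_\coarse, \widehat\vv_\coarse) = F(\widehat\vv_\coarse) - B(\uu_\coarse, \widehat\vv_\coarse)$ on $\widehat\V_\coarse$. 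Testing these two relations once with $\widetilde\ee$ and once with $\widehat\uu_\coarse - \uu_\coarse$, applying Cauchy--Schwarz, and invoking~\eqref{eq:lambda}, yields
\begin{equation*}
 \lambda^{1/2} \, \bnorm{\widehat\uu_\coarse - \uu_\coarse} \le \enorm{\widetilde\ee}{0} \le \Lambda^{1/2} \, \bnorm{\widehat\uu_\coarse - \uu_\coarse}.
\end{equation*}
It thus suffices to show that $\enorm{\widetilde\ee}{0}^2$ is equivalent to $\est_\coarse^2$ with constants independent of $\PPP_\coarse$, $\QQQ_\coarse$, and the meshes $\TT_{\coarse\nu}$.

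The passage to $B_0$ is precisely what enables the separation: by~\eqref{eq2:lemma:orthogonal} we have the Pythagorean decomposition $\enorm{\widetilde\ee}{0}^2 = \sum_{\nu \in \III} \norm{\widetilde e_\nu}{D}^2$, while the structure~\eqref{eq:def:Vhat} of $\widehat\V_\coarse$ forces $\widetilde e_\nu \in \widehat\X_{\coarse\nu}$ for $\nu \in \PPP_\coarse$, $\widetilde e_\nu \in \X_0$ for $\nu \in \QQQ_\coarse$, and $\widetilde e_\nu = 0$ otherwise. For $\nu \in \QQQ_\coarse$, choosing $\widehat\vv_\coarse = v_0 P_\nu$ in the defining equation and using~\eqref{eq1:lemma:orthogonal} identifies $\widetilde e_\nu$ with the hierarchical indicator function $e_{\coarse\nu}$ of~\eqref{eq1:parametric-error-estimate:b}, so that $\norm{\widetilde e_\nu}{D} = \est_\coarse(\nu)$ exactly. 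For $\nu \in \PPP_\coarse$, the Galerkin orthogonality $F(v_{\coarse\nu} P_\nu) = B(\uu_\coarse, v_{\coarse\nu} P_\nu)$ for all $v_{\coarse\nu} \in \X_{\coarse\nu}$ shows that $\widetilde e_\nu$ is $\dual{\cdot}{\cdot}$-orthogonal to the coarse component of the two-level splitting $\widehat\X_{\coarse\nu} = \X_{\coarse\nu} \oplus \hull\{\widehat\varphi_{\coarse\nu,z} : z \in \NN_{\coarse\nu}^+\}$, while the test $\widehat\vv_\coarse = \widehat\varphi_{\coarse\nu,z} P_\nu$ gives $|\dual{\widetilde e_\nu}{\widehat\varphi_{\coarse\nu,z}}| = \est_\coarse(\nu,z) \, \norm{\widehat\varphi_{\coarse\nu,z}}{D}$ exactly, by~\eqref{eq1:spatial-error-estimate}.

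The main obstacle is therefore the purely deterministic two-level equivalence
\begin{equation*}
 \norm{\widetilde e_\nu}{D}^2 \simeq \sum_{z \in \NN_{\coarse\nu}^+} \est_\coarse(\nu,z)^2
 \qquad \text{for each } \nu \in \PPP_\coarse,
\end{equation*}
which must hold uniformly in $\nu$ and in $\TT_{\coarse\nu} \in \refine(\TT_0)$. The upper bound is the easier direction and follows by expanding $\widetilde e_\nu$ in the two-level basis, testing with $\widetilde e_\nu$ itself, and using the finite-overlap property~\eqref{eq:const:K'} together with Cauchy--Schwarz. The lower bound is the delicate step: I plan to invoke the $H^1$-stability of the two-level decomposition of $\widehat\X_{\coarse\nu}$ (which is standard for NVB under the assumptions of subsection~\ref{sec:spatial:approx}) combined with a colouring argument based on~\eqref{eq:const:K}, reducing the bound to local estimates on individual hat functions, which in turn are controlled by $\est_\coarse(\nu,z)$ via the orthogonality constraint $\widetilde e_\nu \perp_{\dual{\cdot}{\cdot}} \X_{\coarse\nu}$. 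This is essentially the deterministic two-level estimator result of~\cite{bprr18+} applied parametrically to the energy scalar product $\dual{\cdot}{\cdot}$, and transfers with constants depending only on $a_0^{\rm min}$, $a_0^{\rm max}$, and the shape regularity of $\TT_0$. Collecting all equivalences yields~\eqref{eq1:thm:estimator} with a constant $\Cest$ of the asserted form.
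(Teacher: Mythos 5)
Your proposal is correct and follows essentially the same route as the paper: your auxiliary function $\widetilde\ee$ coincides (via the Galerkin property of $\widehat\uu_\coarse$ and the $B_0$-decoupling~\eqref{eq1:lemma:orthogonal}) with the paper's $\widehat\ee_\coarse$ from Step~1 of its proof, your two-sided bound $\lambda^{1/2}\bnorm{\widehat\uu_\coarse-\uu_\coarse}\le\enorm{\widetilde\ee}{0}\le\Lambda^{1/2}\bnorm{\widehat\uu_\coarse-\uu_\coarse}$ is the paper's Step~3, and the componentwise identification of the indicators plus the deterministic two-level equivalence is exactly Lemma~\ref{lemma:twolevel} (built on Lemmas~\ref{lemma:localization} and~\ref{lemma:interpolation} from~\cite{bprr18+}), which you correctly isolate as the key remaining step. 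The only caveat is a harmless labelling slip in the last paragraph: the direction proved by testing with $\widetilde e_\nu$ and finite overlap is $\sum_{z}\est_\coarse(\nu,z)^2\lesssim\norm{\widetilde e_\nu}{D}^2$, while the direction needing the stable splitting is $\norm{\widetilde e_\nu}{D}^2\lesssim\sum_{z}\est_\coarse(\nu,z)^2$, so your ``upper''/``lower'' tags are interchanged relative to the displayed equivalence, though the techniques are attached to the right directions.
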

%%%%%%%%%%%%%%%%%%%%

%%%%%%%%%%%%%%%%%%%%
\subsection{Auxiliary results in deterministic setting}
%%%%%%%%%%%%%%%%%%%%

Throughout this section, we denote by $\TT_\deterministic \in \refine(\TT_0)$
an arbitrary refinement of the initial mesh. Recall that $\X = H^1_0(D)$ and $\X_\deterministic := \SS^1_0(\TT_\deterministic)$.
The proof of Theorem~\ref{thm:estimator} will employ the (spatial) orthogonal projections
\begin{equation*}
 \G_\deterministic : \X \to \X_\deterministic
 \quad \text{and} \quad
 \widehat\G_{\deterministic,z} : \X \to \widehat\X_{\deterministic,z} := {\rm span}\{\widehat\varphi_{\deterministic, z}\}\ \ 
 \text{for}\ z \in \NN_\deterministic^+
\end{equation*}
defined by
\begin{align}
 \dual{\G_\deterministic w}{v_\deterministic} &= \dual{w}{v_\deterministic}
 \quad\quad \text{for all } v_\deterministic \in \X_\deterministic,
 \label{eq:projector:G}
 \\
 \dual{\widehat\G_{\deterministic,z} w}{\widehat v_{\deterministic,z}} &= \dual{w}{\widehat v_{\deterministic,z}}
 \quad\,\; \text{for all } \widehat v_{\deterministic,z} \in \widehat\X_{\deterministic,z}.
 \label{eq:projector:hatG}
\end{align}
First, we recall the norm equivalence from~\cite[Proof of Lemma~3.4, Steps~1--2]{bprr18+}.

%%%%%%%%%%%%%%%%%%%%
\begin{lemma}\label{lemma:localization}
For all $z \in \NN_\deterministic^+$, let $\widehat w_{\deterministic,z} \in {\rm span}\{\widehat\varphi_{\deterministic,z}\}$.
Then, there holds
\begin{equation} \label{eq:localization}
 K^{-1} \, \norm[\Big]{\sum_{z \in \NN_\deterministic^+}\widehat w_{\deterministic,z}}{D}^2
 \le \sum_{z \in \NN_\deterministic^+} \norm{\widehat w_{\deterministic,z}}{D}^2
 \le \Cloc \, \norm[\Big]{\sum_{z \in \NN_\deterministic^+}\widehat w_{\deterministic,z}}{D}^2.
\end{equation}
Here, $\Cloc > 0$ depends only on the shape regularity of $\widehat\TT_\deterministic$
and the mean field $a_0$, whereas $K > 0$ is the constant from~\eqref{eq:const:K}.
\end{lemma}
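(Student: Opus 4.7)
The plan is to establish the two inequalities separately. The lower bound is a finite-overlap argument based on~\eqref{eq:const:K}, while the upper bound, which is the main content, will follow from a local norm equivalence on each coarse element after a key observation about the sum $W := \sum_{z\in\NN_\deterministic^+}\widehat w_{\deterministic,z}$.

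For the lower bound, I would decompose $\norm{W}{D}^2 = \int_D a_0 |\nabla W|^2\,\d x$ as a sum over $T\in\TT_\deterministic$. On each $T$, the set of indices $z\in\NN_\deterministic^+$ with $|T\cap\supp(\widehat\varphi_{\deterministic,z})|>0$ has cardinality at most $K$ by~\eqref{eq:const:K}, so a pointwise Cauchy--Schwarz inequality gives $|\nabla W|^2 \le K\sum_z |\nabla \widehat w_{\deterministic,z}|^2$ almost everywhere on $T$. Multiplying by $a_0$, integrating over $T$, and summing over $T\in\TT_\deterministic$ then yields $\norm{W}{D}^2 \le K\sum_z\norm{\widehat w_{\deterministic,z}}{D}^2$, which is precisely the lower bound in~\eqref{eq:localization}.

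For the upper bound, write $\widehat w_{\deterministic,z} = c_z\widehat\varphi_{\deterministic,z}$. The crucial structural observation is that, by the Kronecker property of the hat basis of $\widehat\X_\deterministic$, one has $W(z) = c_z$ for every $z\in\NN_\deterministic^+$ and, most importantly, $W(z') = 0$ at every old vertex $z'\in\NN_\deterministic$. Consequently, for each $T\in\TT_\deterministic$ the restriction $W|_T$ belongs to the finite-dimensional subspace $V(T)$ of functions that are piecewise affine on the sub-triangulation of $T$ inherited from $\widehat\TT_\deterministic$ and vanish at all vertices of $T$. Any element of $V(T)$ with zero gradient is constant and, being zero at a vertex of $T$, must vanish identically; hence $\norm{\nabla\cdot}{L^2(T)}$ is a norm on $V(T)$. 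Equivalence of norms on a finite-dimensional reference configuration combined with standard scaling then yields
\begin{equation*}
h_T^{d-2}\sum_{z\in\NN_\deterministic^+\cap\overline T}|v(z)|^2 \le C\,\norm{\nabla v}{L^2(T)}^2 \qquad\text{for all } v\in V(T),
\end{equation*}
with $C$ depending only on shape regularity. Applying this to $v=W|_T$, combining with the scaling estimate $\norm{\widehat\varphi_{\deterministic,z}}{D}^2 \le C\, h_T^{d-2}$ (from the upper bound on $a_0$), summing over $T\in\TT_\deterministic$ and over $z\in\NN_\deterministic^+$ (with bounded multiplicity controlled by~\eqref{eq:const:K'}), and finally using the lower bound on $a_0$, one arrives at $\sum_z\norm{\widehat w_{\deterministic,z}}{D}^2 \le \Cloc\,\norm{W}{D}^2$.

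The hard part is the local norm equivalence on $V(T)$ with a constant that depends only on the shape regularity of $\widehat\TT_\deterministic$ (and not on any hidden refinement-depth or patch-size parameter). This is where the precise definition of $\widehat\TT_\deterministic$ as the canonical NVB refinement in which every edge of $\TT_\deterministic$ has been bisected (for $d=2$; analogously for $d=3$) is essential: it fixes, up to shape-regular similarity, the combinatorics of both the sub-triangulation of each $T\in\TT_\deterministic$ and the set $\NN_\deterministic^+\cap\overline T$, so that the norm equivalence on $V(T)$ is transferred from a finite family of reference configurations with a single uniform constant.
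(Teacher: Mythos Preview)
Your argument is correct and is the standard two-level localization proof: the lower bound via the finite-overlap estimate~\eqref{eq:const:K} and the upper bound via a local inverse inequality on the finite-dimensional space $V(T)$ of piecewise affines on the fixed NVB sub-pattern of $T$ that vanish at the coarse vertices. The paper does not actually prove this lemma; it merely recalls it from~\cite[Proof of Lemma~3.4, Steps~1--2]{bprr18+}, and your proof is precisely the argument carried out there, so there is nothing to compare.
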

%%%%%%%%%%%%%%%%%%%%

Second, we recall that nodal interpolation is stable on finite-dimensional subspaces;
see~\cite[Proof of Lemma~3.5, Step~1]{bprr18+}.

%%%%%%%%%%%%%%%%%%%%
\begin{lemma}\label{lemma:interpolation}
For $\widehat v_\deterministic \in \widehat\X_\deterministic$, let $v_\deterministic := \sum_{z \in \NN_\deterministic} \widehat v_\deterministic(z) \varphi_{\deterministic,z}$ be the nodal interpolation onto $\X_\deterministic$. Then
\begin{equation} \label{eq:interpolation:1}
 \widehat v_\deterministic - v_\deterministic = \sum_{z \in \NN_\deterministic^+} \widehat w_{\deterministic,z} \quad \text{with} \quad \widehat w_{\deterministic,z} \in {\rm span}\{\widehat\varphi_{\deterministic,z}\}
\end{equation}
and there holds
\begin{equation} \label{eq:interpolation:2}
 \norm{\widehat v_\deterministic - v_\deterministic}{D}
 \le \Cstab \, \norm{\widehat v_\deterministic}{D},
\end{equation}
where $\Cstab>0$ depends only on the shape regularity of $\widehat\TT_\deterministic$
and the mean field $a_0$.
\end{lemma}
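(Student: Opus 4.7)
The plan is to handle the decomposition \eqref{eq:interpolation:1} and the stability bound \eqref{eq:interpolation:2} separately, both by classical finite-element arguments.

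For the decomposition, I would start from the observation that $v_\deterministic \in \X_\deterministic \subset \widehat\X_\deterministic$, so that $\widehat v_\deterministic - v_\deterministic$ lies in $\widehat\X_\deterministic$ and admits the hat-function expansion
$\widehat v_\deterministic - v_\deterministic = \sum_{z' \in \widehat\NN_\deterministic \setminus \partial D} \alpha_{z'} \, \widehat\varphi_{\deterministic,z'}$,
where by the Kronecker property $\alpha_{z'} = (\widehat v_\deterministic - v_\deterministic)(z')$. For $z' \in \NN_\deterministic \setminus \partial D$ the definition of the nodal interpolant forces $v_\deterministic(z') = \widehat v_\deterministic(z')$, so $\alpha_{z'} = 0$ (note that the boundary nodes of $\NN_\deterministic$ contribute nothing in the sum defining $v_\deterministic$, since $\widehat v_\deterministic$ vanishes on $\partial D$). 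Using $\NN_\deterministic^+ = (\widehat\NN_\deterministic \setminus \NN_\deterministic) \setminus \partial D$, only the vertices $z' \in \NN_\deterministic^+$ survive, and setting $\widehat w_{\deterministic,z} := \alpha_z \, \widehat\varphi_{\deterministic,z} \in {\rm span}\{\widehat\varphi_{\deterministic,z}\}$ gives~\eqref{eq:interpolation:1}.

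For the stability estimate, the triangle inequality reduces the task to proving $\norm{v_\deterministic}{D} \lesssim \norm{\widehat v_\deterministic}{D}$. Since $0 < a_0^{\rm min} \le a_0 \le a_0^{\rm max} < \infty$ by~\eqref{eq2:a}, the weighted energy norm is equivalent to the unweighted $H^1$-seminorm, so it suffices to show $\norm{\nabla v_\deterministic}{L^2(D)} \lesssim \norm{\nabla \widehat v_\deterministic}{L^2(D)}$ with a constant depending only on shape regularity. I would argue element by element over $T \in \TT_\deterministic$: on each such $T$, $v_\deterministic|_T$ is the affine interpolant of $\widehat v_\deterministic|_T$ at the three (resp.\ four) vertices of $T$, while $\widehat v_\deterministic|_T$ lives in the fixed finite-dimensional space of piecewise affine functions on the refinement of $T$ inherited from $\widehat\TT_\deterministic$. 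An affine pullback to a reference element reduces the local bound to the boundedness of a linear operator between two finite-dimensional spaces, with the $h_T$-dependent scaling of the $H^1$-seminorm cancelling on both sides; summation over all $T \in \TT_\deterministic$ and reintroducing the $a_0$ weight then yield~\eqref{eq:interpolation:2} with $\Cstab$ of the claimed dependence.

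The main obstacle is ensuring that the local constant emerging from the reference-element argument is \emph{uniform}. In two dimensions the refinement pattern is fixed on the reference triangle (uniform refinement into four subtriangles via three bisections), so a single finite-dimensional norm equivalence is enough. In three dimensions the refinement from $\TT_\deterministic$ to $\widehat\TT_\deterministic$ described in subsection~\ref{sec:spatial:approx}(ii) induces only finitely many combinatorial patterns on the reference tetrahedron; taking the maximum of the associated finite-dimensional constants yields the desired uniform bound. No quantitative interpolation error estimate is needed, because $\widehat v_\deterministic$ already belongs to a discrete space and the argument is purely algebraic.
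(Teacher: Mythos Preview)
Your argument is correct. The paper itself does not give a proof of this lemma at all: it simply states the result and points to \cite[Proof of Lemma~3.5, Step~1]{bprr18+} for the details, so there is nothing to compare against beyond noting that your decomposition via the Kronecker property and your element-by-element scaling/reference-element argument constitute exactly the standard route one would expect the cited proof to take.
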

%%%%%%%%%%%%%%%%%%%%

%%%%%%%%%%%%%%%%%%%%
\subsection{Proof of Theorem~\ref{thm:estimator}}
%%%%%%%%%%%%%%%%%%%%

Recall the orthogonal projectors $\G_\deterministic : \X \to \X_\deterministic$
and $\widehat\G_{\deterministic,z} : \X \to \widehat\X_{\deterministic,z}$
defined in~\eqref{eq:projector:G} and~\eqref{eq:projector:hatG}, respectively.
The following lemma provides the key argument for the proof of Theorem~\ref{thm:estimator}.

%%%%%%%%%%%%%%%%%%%%
\begin{lemma}\label{lemma:twolevel}
For any $\widehat\vv_\coarse = \sum_{\nu \in \PPP_\coarse} \widehat v_{\coarse\nu} P_\nu + \sum_{\nu \in \QQQ_\coarse} v_{\coarse\nu} P_\nu \in \widehat\V_\coarse$, where $\widehat v_{\coarse\nu} \in \widehat\X_{\coarse\nu}$ for $\nu \in \PPP_\coarse$ and $v_{\coarse\nu} \in \X_{\coarse\nu} = \X_0$ for $\nu \in \QQQ_\coarse$, the following estimates hold
\begin{equation} \label{eq:lemma:twolevel}
 \CY^{-1} \, \enorm{\widehat\vv_\coarse}{0}^2
 \le \sum_{\nu \in \PPP_\coarse} 
 	\Big( \norm{\G_{\coarse\nu} \widehat v_{\coarse\nu}}{D}^2
	+ \sum_{z \in \NN_{\coarse\nu}^+} \norm{\widehat\G_{\coarse\nu,z} \widehat v_{\coarse\nu}}{D}^2 \Big)
	+ \sum_{\nu \in \QQQ_\coarse} \norm{v_{\coarse\nu}}{D}^2
 \le 2 \KY \, \enorm{\widehat\vv_\coarse}{0}^2.
\end{equation}
Here, $\CY\ge1$ depends only on the shape regularity of $\widehat\TT$
and the mean field $a_0$, whereas $K > 0$ is the constant from~\eqref{eq:const:K}.
Moreover, the upper bound holds with the constant $\KY$ (instead of $2\KY$) if $\G_{\coarse\nu} \widehat v_{\coarse\nu}= 0$
for all $\nu \in \PPP_\coarse$.
\end{lemma}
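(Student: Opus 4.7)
My plan is to reduce the parametric equivalence \eqref{eq:lemma:twolevel} to a purely deterministic two-level norm equivalence on each $\widehat\X_{\coarse\nu}$ and then apply a standard additive Schwarz duality argument. By the Parseval-type identity \eqref{eq2:lemma:orthogonal},
\begin{equation*}
 \enorm{\widehat\vv_\coarse}{0}^2 = \sum_{\nu \in \PPP_\coarse} \norm{\widehat v_{\coarse\nu}}{D}^2 + \sum_{\nu \in \QQQ_\coarse} \norm{v_{\coarse\nu}}{D}^2,
\end{equation*}
so the $\QQQ_\coarse$-contributions match termwise on both sides of \eqref{eq:lemma:twolevel} and can be set aside. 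It therefore suffices to establish, for every $\nu \in \PPP_\coarse$ and every $\widehat v \in \widehat\X_{\coarse\nu}$, the deterministic equivalence $\CY^{-1}\norm{\widehat v}{D}^2 \le \norm{\G_{\coarse\nu}\widehat v}{D}^2 + \sum_{z \in \NN_{\coarse\nu}^+}\norm{\widehat\G_{\coarse\nu,z}\widehat v}{D}^2 \le 2\KY\,\norm{\widehat v}{D}^2$, with constants independent of $\nu$ and of the particular refinement.

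For the upper bound I would use that $\G_\deterministic$ is a $\dual{\cdot}{\cdot}$-orthogonal projection, hence $\norm{\G_\deterministic\widehat v}{D}\le\norm{\widehat v}{D}$, and apply Cauchy--Schwarz on the support of each hat function to obtain $\norm{\widehat\G_{\deterministic,z}\widehat v}{D}^2 \le \int_{\supp\widehat\varphi_{\deterministic,z}} a_0\,|\nabla\widehat v|^2\,\d{x}$. Summing over $z \in \NN_\deterministic^+$ and invoking the uniform overlap bound \eqref{eq:const:K} turns this into a single integral over $D$ with multiplicity at most $\KY$, giving $\sum_z \norm{\widehat\G_{\deterministic,z}\widehat v}{D}^2 \le \KY\,\norm{\widehat v}{D}^2$. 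The combined upper bound is then $1+\KY \le 2\KY$ (since $\KY \ge 1$), and dropping the coarse contribution yields the sharpened constant $\KY$ claimed when $\G_{\coarse\nu}\widehat v_{\coarse\nu} = 0$.

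For the lower bound I would invoke the standard additive Schwarz duality: any decomposition $\widehat v = v + \sum_z \widehat w_z$ with $v \in \X_\deterministic$ and $\widehat w_z \in \hull\{\widehat\varphi_{\deterministic,z}\}$ satisfies, by the defining identities \eqref{eq:projector:G}--\eqref{eq:projector:hatG} and two applications of Cauchy--Schwarz,
\begin{equation*}
 \norm{\widehat v}{D}^2 = \dual{v}{\G_\deterministic\widehat v} + \sum_z \dual{\widehat w_z}{\widehat\G_{\deterministic,z}\widehat v} \le \Big( \norm{v}{D}^2 + \sum_z \norm{\widehat w_z}{D}^2 \Big)^{1/2} \Big( \norm{\G_\deterministic\widehat v}{D}^2 + \sum_z \norm{\widehat\G_{\deterministic,z}\widehat v}{D}^2 \Big)^{1/2}.
\end{equation*}
If the decomposition is stable, i.e., $\norm{v}{D}^2 + \sum_z \norm{\widehat w_z}{D}^2 \le \CY\,\norm{\widehat v}{D}^2$, then dividing by $\norm{\widehat v}{D}$ and squaring produces the lower bound. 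Such a stable decomposition is supplied by Lemma~\ref{lemma:interpolation}: take $v$ to be the nodal interpolant of $\widehat v$ onto $\X_\deterministic$ and $\widehat w_z$ the associated hat correctors; the triangle inequality together with \eqref{eq:interpolation:2} controls $\norm{v}{D} \le (1+\Cstab)\norm{\widehat v}{D}$, while Lemma~\ref{lemma:localization} combined with \eqref{eq:interpolation:2} gives $\sum_z \norm{\widehat w_z}{D}^2 \le \Cloc\,\norm{\widehat v - v}{D}^2 \le \Cloc\Cstab^2\,\norm{\widehat v}{D}^2$.

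The hard part is bookkeeping rather than conceptual: making sure that the constant $\CY$ assembled from $\Cstab$, $\Cloc$, and the Schwarz-type Cauchy--Schwarz step depends only on the shape regularity of $\widehat\TT$ and on $a_0$, uniformly over $\nu \in \PPP_\coarse$ and over admissible refinements $\TT_{\coarse\nu}$ of $\TT_0$. Once this uniform deterministic equivalence is in hand, squaring and summing the $\PPP_\coarse$-estimate over $\nu$ and recombining with the $\QQQ_\coarse$-identities immediately produces \eqref{eq:lemma:twolevel} with the claimed dependence of the constants.
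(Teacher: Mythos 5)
Your proof is correct and produces the same constants as the paper's. The lower bound coincides with the paper's argument verbatim: additive Schwarz duality applied to the nodal-interpolant decomposition of Lemma~\ref{lemma:interpolation}, with the stability of the decomposition controlled by \eqref{eq:interpolation:2} and the upper bound in \eqref{eq:localization}, yielding $\CY=(1+\Cstab)^2+\Cloc\Cstab^2$; likewise, the reduction of the parametric statement to a deterministic one via \eqref{eq2:lemma:orthogonal} is exactly the paper's first step. Your upper bound, however, follows a genuinely different and more elementary route. The paper writes the left-hand side as $\dual[\big]{\G_{\coarse\nu}\widehat v_{\coarse\nu}+\sum_{z}\widehat\G_{\coarse\nu,z}\widehat v_{\coarse\nu}}{\widehat v_{\coarse\nu}}$, applies Cauchy--Schwarz, and then absorbs the sum of local projections using the triangle inequality together with the \emph{lower} bound in \eqref{eq:localization}, which is where the factor $2K$ (respectively $K$ when the coarse projection vanishes) arises. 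You instead use contractivity of the orthogonal projection to get $\norm{\G_{\coarse\nu}\widehat v_{\coarse\nu}}{D}\le\norm{\widehat v_{\coarse\nu}}{D}$, bound each one-dimensional projection by the local energy, $\norm{\widehat\G_{\coarse\nu,z}\widehat v_{\coarse\nu}}{D}^2\le\int_{\supp\widehat\varphi_{\coarse\nu,z}}a_0|\nabla\widehat v_{\coarse\nu}|^2\,\d{x}$ (which is indeed a correct consequence of Cauchy--Schwarz applied to the rank-one projection formula), and then sum using the overlap bound \eqref{eq:const:K} directly, arriving at $1+K\le 2K$ and at the sharpened constant $K$ when the coarse term is dropped. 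Both arguments are valid and give identical constants; yours bypasses Lemma~\ref{lemma:localization} in the upper bound by invoking the overlap counting that underlies it directly, at the price of using the explicit form of $\widehat\G_{\coarse\nu,z}$, which the paper only exploits later in Step~2 of the proof of Theorem~\ref{thm:estimator}.
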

%%%%%%%%%%%%%%%%%%%%

%%%%%%%%%%%%%%%%%%%%
\begin{proof}
Using~\eqref{eq2:lemma:orthogonal}, we have
\begin{equation}\label{eq:orthogonal2}
 \enorm{\widehat\vv_\coarse}{0}^2
 = \sum_{\nu \in \PPP_\coarse} \norm{\widehat v_{\coarse\nu}}{D}^2
	+ \sum_{\nu \in \QQQ_\coarse} \norm{v_{\coarse\nu}}{D}^2.
\end{equation}
For all $\nu \in \PPP_\coarse$, we
apply Lemma~\ref{lemma:interpolation} to $\widehat v_{\coarse\nu} \in \widehat\X_{\coarse\nu}$
in order to find $v_{\coarse\nu} \in \X_{\coarse\nu}$ and $\widehat w_{\coarse\nu,z} \in {\rm span}\{\widehat\varphi_{\coarse\nu,z}\}$ for all $z \in \NN_{\coarse\nu}^+$
such that~\eqref{eq:interpolation:1}--\eqref{eq:interpolation:2} hold.

{\bf Step~1.}
First, we prove the lower bound in~\eqref{eq:lemma:twolevel}.
The Cauchy inequality yields that
\begin{align*}
 \norm{\widehat v_{\coarse\nu}}{D}^2
 & \stackrel{\eqref{eq:interpolation:1}}{=} \dual[\Big]{\widehat v_{\coarse\nu}}{v_{\coarse\nu} + \sum_{z \in \NN_{\coarse\nu}^+} \widehat w_{\coarse\nu,z}}
 = \dual{\G_{\coarse\nu}\widehat v_{\coarse\nu}}{v_{\coarse\nu}}
 + \sum_{z \in \NN_{\coarse\nu}^+} \dual{\widehat\G_{\coarse\nu,z}\widehat v_{\coarse\nu}}{\widehat w_{\coarse\nu,z}}
 \\
 & \stackrel{\phantom{\eqref{eq:interpolation:1}}}{\le} \Big( \norm{\G_{\coarse\nu}\widehat v_{\coarse\nu}}{D}^2
	 + \sum_{z \in \NN_{\coarse\nu}^+} \norm{\widehat\G_{\coarse\nu,z}\widehat v_{\coarse\nu}}{D}^2 \Big)^{1/2}
	 \Big(\norm{ v_{\coarse\nu}}{D}^2
	 + \sum_{z \in \NN_{\coarse\nu}^+} \norm{ \widehat w_{\coarse\nu,z}}{D}^2 \Big)^{1/2}.
\end{align*}
Stability~\eqref{eq:interpolation:2} shows that
\begin{equation*}
 \norm{v_{\coarse\nu}}{D} 
 \le \norm{\widehat v_{\coarse\nu}}{D} + \norm{\widehat v_{\coarse\nu} - v_{\coarse\nu}}{D}
 \stackrel{\eqref{eq:interpolation:2}}{\le} (1 + \Cstab) \norm{\widehat v_{\coarse\nu}}{D}.
\end{equation*}
The upper bound in~\eqref{eq:localization} proves that
\begin{equation*}
 \sum_{z \in \NN_{\coarse\nu}^+} \norm{\widehat w_{\coarse\nu,z}}{D}^2
 \le \Cloc \, \norm[\Big]{\sum_{z \in \NN_{\coarse\nu}^+}  \widehat w_{\coarse\nu,z}}{D}^2
 = \Cloc \, \norm{\widehat v_{\coarse\nu} - v_{\coarse\nu}}{D}^2
 \reff{eq:interpolation:2}\le \Cloc\Cstab^2 \, \norm{\widehat v_{\coarse\nu}}{D}^2.
\end{equation*}
Combining the latter three estimates, we conclude that
\begin{equation*}
 \norm{\widehat v_{\coarse\nu}}{D}
 \le \big[(1+\Cstab)^2 + \Cloc\Cstab^2 \big]^{1/2} \, \Big( \norm{\G_{\coarse\nu}\widehat v_{\coarse\nu}}{D}^2
	 + \sum_{z \in \NN_{\coarse\nu}^+} \norm{\widehat\G_{\coarse\nu,z}\widehat v_{\coarse\nu}}{D}^2 \Big)^{1/2}.
\end{equation*}
Using this estimate together with~\eqref{eq:orthogonal2},
we prove the lower bound in~\eqref{eq:lemma:twolevel} with $\CY = (1+\Cstab)^2 + \Cloc\Cstab^2 \ge 1$.

{\bf Step~2.}
To prove the upper bound in~\eqref{eq:lemma:twolevel}, we proceed analogously.
For all $\nu \in \PPP_\coarse$, there holds
\begin{align*}
 &\norm{\G_{\coarse\nu}\widehat v_{\coarse\nu}}{D}^2
	 + \sum_{z \in \NN_{\coarse\nu}^+} \norm{\widehat\G_{\coarse\nu,z}\widehat v_{\coarse\nu}}{D}^2
 = \dual{\G_{\coarse\nu}\widehat v_{\coarse\nu}}{\widehat v_{\coarse\nu}}
 + \sum_{z \in \NN_{\coarse\nu}^+} \dual{\widehat\G_{\coarse\nu,z}\widehat v_{\coarse\nu}}{\widehat v_{\coarse\nu}}
 \qquad\qquad\qquad\quad
 \\[4pt]
 & \quad\qquad\qquad\qquad
 = \dual[\Big]{\G_{\coarse\nu}\widehat v_{\coarse\nu} + \sum_{z \in \NN_{\coarse\nu}^+} \widehat\G_{\coarse\nu,z}\widehat v_{\coarse\nu}}{\widehat v_{\coarse\nu}}
 \le \norm[\Big]{\G_{\coarse\nu}\widehat v_{\coarse\nu} + \sum_{z \in \NN_{\coarse\nu}^+} \widehat\G_{\coarse\nu,z}\widehat v_{\coarse\nu}}{D}
 \norm{\widehat v_{\coarse\nu}}{D}.
\end{align*}
Using the lower bound in~\eqref{eq:localization}
and the fact that $K \ge 1$, we prove that
\begin{align*}
 \norm[\Big]{\G_{\coarse\nu}\widehat v_{\coarse\nu} + \sum_{z \in \NN_{\coarse\nu}^+} \widehat\G_{\coarse\nu,z}\widehat v_{\coarse\nu}}{D}^2
 &\le 2 \, \Big( \norm{\G_{\coarse\nu}\widehat v_{\coarse\nu}}{D}^2 + \norm[\Big]{\sum_{z \in \NN_{\coarse\nu}^+} \widehat\G_{\coarse\nu,z}\widehat v_{\coarse\nu}}{D}^2 \Big)
 \\& 
 \le 2K \, \Big( \norm{\G_{\coarse\nu}\widehat v_{\coarse\nu}}{D}^2 + \sum_{z \in \NN_{\coarse\nu}^+} \norm{\widehat\G_{\coarse\nu,z}\widehat v_{\coarse\nu}}{D}^2 \Big).
\end{align*}
The latter two estimates imply that
\begin{equation*}
 \Big( \norm{\G_{\coarse\nu}\widehat v_{\coarse\nu}}{D}^2 + \sum_{z \in \NN_{\coarse\nu}^+} \norm{\widehat\G_{\coarse\nu,z}\widehat v_{\coarse\nu}}{D}^2 \Big)^{1/2}
 \le \sqrt{2K} \, \norm{\widehat v_{\coarse\nu}}{D}.
\end{equation*} 
By substituting this estimate into~\eqref{eq:orthogonal2}, we conclude the proof.
\end{proof}
%%%%%%%%%%%%%%%%%%%%

%%%%%%%%%%%%%%%%%%%%
\begin{proof}[Proof of Theorem~\ref{thm:estimator}]
The equivalence of estimates~\eqref{eq1:thm:estimator} and~\eqref{eq2:thm:estimator} is an immediate consequence of~\eqref{eq2:saturation}. Therefore, it only remains to prove~\eqref{eq1:thm:estimator}.
The proof consists of three~steps.

{\bf Step~1.} 
Define
$\widehat\ee_\coarse :=
  \sum_{\nu \in \PPP_\coarse} \widehat e_{\coarse\nu} P_\nu +
  \sum_{\nu \in \QQQ_\coarse} e_{\coarse\nu} P_\nu \in \widehat\V_\coarse
$,
where $e_{\coarse\nu} \in \X_0 = \X_{\coarse\nu}$ for $\nu \in \QQQ_\coarse$ is given by~\eqref{eq1:parametric-error-estimate}, while $\widehat e_{\coarse\nu} \in \widehat\X_{\coarse\nu}$ for $\nu \in \PPP_\coarse$ is the
unique solution to
\begin{equation}\label{eqX:thm:estimator}
 \dual{\widehat e_{\coarse\nu}}{\widehat v_{\coarse\nu}} 
 = B(\widehat\uu_\coarse - \uu_\coarse, \widehat v_{\coarse\nu} P_\nu)
 \quad \text{for all } \widehat v_{\coarse\nu} \in \widehat\X_{\coarse\nu}.
\end{equation}
For all $\nu \in \PPP_\coarse$,
Galerkin orthogonality implies~that
\begin{equation*}
 \dual{\widehat e_{\coarse\nu}}{v_{\coarse\nu}} 
 = B(\widehat\uu_\coarse - \uu_\coarse, v_{\coarse\nu} P_\nu)
 = 0 
 \quad \text{for all } v_{\coarse\nu} \in \X_{\coarse\nu}.
\end{equation*}
Hence, we see that $\G_{\coarse\nu} \widehat e_{\coarse\nu} = 0$ for all $\nu \in \PPP_\coarse$.
In conclusion, Lemma~\ref{lemma:twolevel} yields that
\begin{equation*}
 \enorm{\widehat\ee_\coarse}{0}^2
 \simeq \sum_{\nu \in \PPP_\coarse} \sum_{z \in \NN_{\coarse\nu}^+} \norm{\widehat\G_{\coarse\nu,z} \widehat e_{\coarse\nu}}{D}^2 
 	+ \sum_{\nu \in \QQQ_\coarse} \norm{e_{\coarse\nu}}{D}^2
 \reff{eq1:parametric-error-estimate:a}=
 \sum_{\nu \in \PPP_\coarse} \sum_{z \in \NN_{\coarse\nu}^+}
 \norm{\widehat\G_{\coarse\nu,z} \widehat e_{\coarse\nu}}{D}^2 
 	+ \sum_{\nu \in \QQQ_\coarse} \est_\coarse(\nu)^2.
\end{equation*}

{\bf Step~2.} The orthogonal projection onto the one-dimensional space ${\rm span}\{ \widehat\varphi_{\coarse\nu,z}\}$ takes the explicit form
\begin{equation*}
 \widehat\G_{\coarse\nu,z} v
 = \frac{\dual{v}{\widehat\varphi_{\coarse\nu,z}}}{\norm{\widehat\varphi_{\coarse\nu,z}}{D}^2} \, \widehat\varphi_{\coarse\nu,z}
 \quad \text{for any } v \in \X.
\end{equation*}
Hence, for all $\nu \in \PPP_\coarse$ and for each $z \in \NN_{\coarse\nu}^+$, there holds
\begin{equation*}
 \norm{\widehat\G_{\coarse\nu,z} \widehat e_{\coarse\nu}}{D} = \frac{|\dual{\widehat e_{\coarse\nu}}{\widehat\varphi_{\coarse\nu,z}}|}{\norm{\widehat\varphi_{\coarse\nu,z}}{D}} 
 = \frac{|B(\widehat \uu_\coarse - \uu_\coarse, \widehat\varphi_{\coarse\nu,z} P_\nu)|}{\norm{\widehat\varphi_{\coarse\nu,z}}{D}}
 \reff{eq1:spatial-error-estimate}= \est_\coarse(\nu,z).
\end{equation*}
This leads to the equivalence
\begin{equation*}
 \enorm{\widehat\ee_\coarse}{0}^2
 \simeq \sum_{\nu \in \PPP_\coarse} \sum_{z \in \NN_{\coarse\nu}^+} \est_\coarse(\nu,z)^2 
 	+ \sum_{\nu \in \QQQ_\coarse} \est_\coarse(\nu)^2
 = \est_\coarse^2,
\end{equation*}
where the hidden constants depend only on uniform shape regularity of the meshes $\TT_\deterministic \in \refine(\TT_0)$, the (local) mesh-refinement rule, and the mean field $a_0$.

{\bf Step~3.}
It remains to prove the equivalence $\enorm{\widehat\ee_\coarse}{0} \simeq \bnorm{\widehat\uu_\coarse - \uu_\coarse}$.
To that end, we note that the variational formulation~\eqref{eqX:thm:estimator} implies that
\begin{equation*}
 B_0(\widehat\ee_\coarse, \widehat\vv_\coarse) = B(\widehat\uu_\coarse - \uu_\coarse, \widehat\vv_\coarse)
 \quad \text{for all } \widehat\vv_\coarse \in \widehat\V_\coarse.
\end{equation*}
Hence, using norm equivalence~\eqref{eq:lambda}, we obtain that
\begin{equation*}
 \enorm{\widehat\ee_\coarse}0^2 = B(\widehat\uu_\coarse - \uu_\coarse, \widehat\ee_\coarse)
 \le \bnorm{\widehat\uu_\coarse - \uu_\coarse} \bnorm{\widehat\ee_\coarse}
 \le \Lambda^{1/2} \, \bnorm{\widehat\uu_\coarse - \uu_\coarse} \enorm{\widehat\ee_\coarse}{0}
\end{equation*}
and
\begin{equation*}
 \bnorm{\widehat\uu_\coarse - \uu_\coarse}^2
 = B_0(\widehat\ee_\coarse, \widehat\uu_\coarse - \uu_\coarse)
 \le \enorm{\widehat\ee_\coarse}{0} \enorm{\widehat\uu_\coarse - \uu_\coarse}{0}
 \le \lambda^{-1/2} \, \enorm{\widehat\ee_\coarse}{0} \bnorm{\widehat\uu_\coarse - \uu_\coarse}.
\end{equation*}
This concludes the proof.
\end{proof}
%%%%%%%%%%%%%%%%%%%%

%%%%%%%%%%%%%%%%%%%%
\begin{remark} \label{rem:reduction}
Let $\V_\fine$ be a multilevel approximation space that is obtained from $\V_\coarse$
by one step of (adaptive) refinement/enrichment (see~\eqref{eq:V:fine}) such that
$\V_\coarse \subseteq \V_\fine \subseteq \widehat\V_\coarse$.
For $d = 2$, newest vertex bisection ensures that $\widehat\varphi_{\coarse\nu,z} = \varphi_{\fine\nu,z}$
for all $\nu \in \PPP_\coarse$ and for each
$z \in \NN_{\coarse\nu}^+ \cap \NN_{\fine\nu}$.
If $\uu_\coarse \in \V_\coarse$ and $\uu_\fine \in \V_\fine$ are two Galerkin approximations, then by arguing
as in the proof of Theorem~\ref{thm:estimator}, we obtain that %one thus obtains that
\begin{equation}\label{eq3:thm:estimator}
 \Cest^{-1} \, \bnorm{\uu_\fine - \uu_\coarse}
 \le\! \bigg( \sum_{\nu \in \PPP_\coarse}\sum_{z \in \NN_{\coarse\nu}^+ \cap \NN_{\fine\nu}} \est_{\coarse}(\nu,z)^2 
 + \sum_{\nu \in \QQQ_\coarse \cap \PPP_\fine} \est_\coarse(\nu)^2 \bigg)^{1/2}\!
 \le \Cest \, \bnorm{\uu_\fine - \uu_\coarse}.
\end{equation} 
Therefore, in this setting (at least in 2D),
the two-level estimator allows to control the error reduction
due to adaptive enrichment of the multilevel approximation space~$\V_\coarse$.
\end{remark}
%%%%%%%%%%%%%%%%%%%%

%%%%%%%%%%%%%%%%%%%%
\section{Adaptive algorithms} \label{sec:algorithms}
%%%%%%%%%%%%%%%%%%%%

In this section, we present adaptive algorithms with three different D{\"o}rfler-type marking criteria
(and hence, different refinement strategies).
These algorithms generate sequences of successively enriched multilevel approximation spaces,
as well as the corresponding Ga\-ler\-kin approximations and error estimates.

We consider the following standard adaptive loop
\begin{equation*}
{\sf SOLVE} \longrightarrow {\sf ESTIMATE}  \longrightarrow {\sf MARK}  \longrightarrow {\sf REFINE},
\end{equation*}
where the precise marking strategy is to be specified in the subsections~below.

%%%%%%%%%%%%%%%%%%%%
\begin{algorithm}\label{algorithm}
{\bfseries Input:}
$\PPP_0 = \{ \0 \}$ and $\TT_{0\nu} := \TT_0$ for all $\nu \in \PPP_0 \cup \QQQ_0$;
marking criterion. Set the counter $\ell := 0$.
\begin{itemize}
\item[\rm(i)] 
Compute the discrete solution $\uu_\ell \in \V_\ell$ by solving~\eqref{eq:discrete_formulation}.
\item[\rm(ii)] 
Compute spatial error indicators $\est_{\ell}(\nu,z)$ from~\eqref{eq1:spatial-error-estimate} for all $\nu \in \PPP_\ell$ and all $z \in \NN_{\ell\nu}^+$.
\item[\rm(iii)] 
Compute parametric error indicators $\est_{\ell}(\nu)$ from~\eqref{eq1:parametric-error-estimate} for all $\nu \in \QQQ_\ell$.
\item[\rm(iv)] 
Use marking criterion to determine $\MM_{\ell\nu} \subseteq \NN_{\ell\nu}^+$ for all $\nu \in \PPP_\ell$ and
$\MMM_\ell \subseteq \QQQ_\ell$.
\item[\rm(v)] For all $\nu \in \PPP_\ell$, set $\TT_{(\ell+1)\nu} := \refine(\TT_{\ell\nu},\MM_{\ell\nu})$.
\item[\rm(vi)] Set $\PPP_{\ell+1} := \PPP_\ell \cup \MMM_\ell$ and
$\TT_{(\ell+1)\nu} := \TT_0$ for all $\nu \in \QQQ_{\ell+1}$.
\item[\rm(vii)] Increase the counter $\ell \mapsto \ell+1$ and goto {\rm(i)}.
\end{itemize}
{\bfseries Output:} For all $\ell \in \N_0$,
the algorithm returns the multilevel stochastic Galerkin approximation~$\uu_\ell \in \V_\ell$
as well as the corresponding error estimate $\est_\ell$.
\end{algorithm}
%%%%%%%%%%%%%%%%%%%%

%%%%%%%%%%%%%%%%%%%%
\subsection{Separate spatial and parametric marking/enrichment} \label{sec:alg1}
%%%%%%%%%%%%%%%%%%%%

The two marking criteria presented below
follow the same approach as utilized in~\cite{bps14,bs16,br18,bprr18++} in the case of single-level stochastic Galerkin FEM.
Under this approach, \emph{either} a spatial refinement \emph{or} a parametric enrichment is performed at each iteration.
The choice between the two
is made by comparing the respective contributions to the total error estimate $\est_\coarse$
given by~\eqref{eq:def:tau} (Marking criterion~\ref{marking:A})
or by comparing the associated error reduction indicators (Marking criterion~\ref{marking:B}; cf.\ Remark~\ref{rem:reduction}).

%%%%%%%%%%%%%%%%%%%%
\begin{marking} \label{marking:A}
\textbf{Input:}
error indicators $\{ \est_{\ell}(\nu,z) : \nu \in \PPP_\ell,\ z \in \NN_{\ell\nu}^+ \}$ and
$\{ \est_{\ell}(\nu) : \nu \in \QQQ_\ell \}$;
marking parameters $0 < \theta_\X, \theta_\PPP \le 1$, and $\vartheta > 0$.
\begin{itemize}
\item[$\bullet$]
If \ $\vartheta \sum_{\nu \in \QQQ_\ell} \est_\ell(\nu)^2 \le \sum_{\nu \in \PPP_\ell} \sum_{z \in \NN_{\ell\nu}^+} \est_{\ell}(\nu,z)^2$,
then proceed as follows:
\begin{itemize}
\item[$\circ$]
Set $\MMM_\ell := \emptyset$.
\item[$\circ$]
Determine $\MM_{\ell\nu} \subseteq \NN_{\ell\nu}^+$ for all $\nu \in \PPP_\ell$ such that 
\begin{equation}
 \label{eq:doerfler:separate1}
 \theta_\X \, \sum_{\nu \in \PPP_\ell} \sum_{z \in \NN_{\ell\nu}^+} \est_{\ell}(\nu,z)^2 
 \le \sum_{\nu \in \PPP_\ell} \sum_{z \in \MM_{\ell\nu}} \est_{\ell}(\nu,z)^2,
\end{equation}
where the cumulative cardinality $\sum_{\nu \in \PPP_\ell} \# \MM_{\ell\nu}$ is minimal (amongst all sets which satisfy the marking criterion~\eqref{eq:doerfler:separate1}).
\end{itemize}
\item[$\bullet$]
Otherwise, if \ 
$\vartheta \sum_{\nu \in \QQQ_\ell} \est_\ell(\nu)^2 > \sum_{\nu \in \PPP_\ell} \sum_{z \in \NN_{\ell\nu}^+} \est_{\ell}(\nu,z)^2$,
then proceed as follows:
\begin{itemize}
\item[$\circ$]
Set $\MM_{\ell\nu} := \emptyset$ for all $\nu \in \PPP_\ell$.
\item[$\circ$]
Determine $\MMM_\ell \subseteq \QQQ_\ell$ such that
\begin{equation}
 \label{eq:doerfler:separate2}
 \theta_\PPP \, \sum_{\nu \in \QQQ_\ell} \est_\ell(\nu)^2 \le \sum_{\nu \in \MMM_\ell} \est_\ell(\nu)^2,
\end{equation}
where the cardinality $\#\MMM_\ell$ is minimal (amongst all sets which satisfy the marking criterion~\eqref{eq:doerfler:separate2}).
\end{itemize}
\end{itemize}
\textbf{Output:}
$\MM_{\ell\nu} \subseteq \NN_{\ell\nu}^+$ for all $\nu \in \PPP_\ell$ and
$\MMM_\ell \subseteq \QQQ_\ell$.
\end{marking}
%%%%%%%%%%%%%%%%%%%%

%%%%%%%%%%%%%%%%%%%%
\begin{marking} \label{marking:B}
\textbf{Input:}
error indicators $\{ \est_{\ell}(\nu,z) : \nu \in \PPP_\ell,\ z \in \NN_{\ell\nu}^+ \}$ and
$\{ \est_{\ell}(\nu) : \nu \in \QQQ_\ell \}$;
marking parameters $0 < \theta_\X, \theta_\PPP \le 1$, and $\vartheta > 0$.
\begin{itemize}
\item[$\bullet$]
Determine $\widetilde\MM_{\ell\nu} \subseteq \NN_{\ell\nu}^+$ for all $\nu \in \PPP_\ell$ such that
\begin{equation}
 \label{eq:doerfler:separate1-2}
 \theta_\X \, \sum_{\nu \in \PPP_\ell} \sum_{z \in \NN_{\ell\nu}^+} \est_{\ell}(\nu,z)^2 
 \le \sum_{\nu \in \PPP_\ell} \sum_{z \in \widetilde\MM_{\ell\nu}} \est_{\ell}(\nu,z)^2,
\end{equation}
where the cumulative cardinality $\sum_{\nu \in \PPP_\ell} \# \widetilde\MM_{\ell\nu}$ is minimal
(amongst all sets which satisfy the marking criterion~\eqref{eq:doerfler:separate1-2}).
\item[$\bullet$]
Define $\widetilde\RR_{\ell\nu} := \NN_{\ell\nu}^+ \cap \widetilde\NN_{\ell\nu}$
for all $\nu \in \PPP_\ell$,
where $\widetilde\NN_{\ell\nu}$ is the set of vertices of $\widetilde\TT_{\ell\nu} = \refine(\TT_{\ell\nu},\widetilde\MM_{\ell\nu})$.
\item[$\bullet$]
Determine $\widetilde\MMM_\ell \subseteq \QQQ_\ell$ such that
\begin{equation}
 \label{eq:doerfler:separate2-2}
 \theta_\PPP \, \sum_{\nu \in \QQQ_\ell} \est_\ell(\nu)^2 \le \sum_{\nu \in \widetilde\MMM_\ell} \est_\ell(\nu)^2,
\end{equation}
where the cardinality $\#\widetilde\MMM_\ell$ is minimal
(amongst all sets which satisfy the marking criterion~\eqref{eq:doerfler:separate2-2}).
\item[$\bullet$]
If \ 
$\vartheta \sum_{\nu \in \widetilde\MMM_\ell} \est_\ell(\nu)^2 \le
  \sum_{\nu \in \PPP_\ell} \sum_{z \in \widetilde\RR_{\ell\nu}} \est_{\ell}(\nu,z)^2$,
then proceed as follows:
\begin{itemize}
\item[$\circ$]
set $\MMM_\ell := \emptyset$
and
$\MM_{\ell\nu} := \widetilde\MM_{\ell\nu}$ for all $\nu \in \PPP_\ell$.
\end{itemize}
\item[$\bullet$]
Otherwise, if \
$\vartheta \sum_{\nu \in \widetilde\MMM_\ell} \est_\ell(\nu)^2 >
  \sum_{\nu \in \PPP_\ell} \sum_{z \in \widetilde\RR_{\ell\nu}} \est_{\ell}(\nu,z)^2$,
then proceed as follows:
\begin{itemize}
\item[$\circ$]
set
$\MMM_\ell := \widetilde \MMM_\ell$
and
$\MM_{\ell\nu} := \emptyset$ for all $\nu \in \PPP_\ell$.
\end{itemize}
\end{itemize}
\textbf{Output:}
$\MM_{\ell\nu} \subseteq \NN_{\ell\nu}^+$ for all $\nu \in \PPP_\ell$ and
$\MMM_\ell \subseteq \QQQ_\ell$.
\end{marking}
%%%%%%%%%%%%%%%%%%%%

%%%%%%%%%%%%%%%%%%%%
\subsection{Combined marking/enrichment} \label{sec:alg2}
%%%%%%%%%%%%%%%%%%%%

In the case of single-level approximation spaces
(where $\TT_{\coarse\nu} = \TT_\coarse$ for all $\nu \in \PPP_\coarse \cup \QQQ_\coarse$),
a combined enrichment of spatial and parametric components at each iteration of the adaptive algorithm
is prohibitively expensive due to the multiplicative increase of the total number of degrees of freedom
(i.e., $\dim \V_\coarse = (\#\PPP_\coarse) \cdot \dim \SS^1_0(\TT_\coarse)$).
The situation is considerably different for
multilevel approximation spaces defined by~\eqref{eq:def:V},
for which combined enrichment always
results in \emph{additive} increase in the total number of degrees of freedom, i.e.,
$\dim \V_\coarse = \sum_{\nu \in \PPP_\coarse} \dim \SS^1_0(\TT_{\coarse\nu})$.
In the context of Algorithm~\ref{algorithm},
this enrichment is steered by the D\"orfler marking performed on the joint set of all spatial and parametric error indicators,
as presented in the following marking criterion.

%%%%%%%%%%%%%%%%%%%%
\begin{marking} \label{marking:C}
\textbf{Input:}
error indicators $\{ \est_{\ell}(\nu,z) : \nu \in \PPP_\ell,\ z \in \NN_{\ell\nu}^+ \}$ and
$\{ \est_{\ell}(\nu) : \nu \in \QQQ_\ell \}$;
marking parameter $0 < \theta \le 1$.
\begin{itemize}
\item[$\bullet$]
Determine the sets $\MM_{\ell\nu} \subseteq \NN_{\ell\nu}^+$ for all $\nu \in \PPP_\ell$
and the set $\MMM_\ell \subseteq \QQQ_\ell$ such that
\begin{equation}\label{eq:doerfler:combined}
 \theta \, \bigg( \sum_{\nu \in \PPP_\ell} \sum_{z \in \NN_{\ell\nu}^+} \est_{\ell}(\nu,z)^2 + \sum_{\nu \in \QQQ_\ell} \est_\ell(\nu)^2 \bigg)
 \le \sum_{\nu \in \PPP_\ell} \sum_{z \in \MM_{\ell\nu}}  \est_{\ell}(\nu,z)^2 + \sum_{\nu \in \MMM_\ell} \est_\ell(\nu)^2,
\end{equation}
where the overall cardinality $\#\MMM_\ell + \sum_{\nu \in \PPP_\ell} \# \MM_{\ell\nu}$ is minimal (amongst all sets which satisfy the marking criterion~\eqref{eq:doerfler:combined}).
\end{itemize}
\textbf{Output:}
$\MM_{\ell\nu} \subseteq \NN_{\ell\nu}^+$ for all $\nu \in \PPP_\ell$ and
$\MMM_\ell \subseteq \QQQ_\ell$.
\end{marking}
%%%%%%%%%%%%%%%%%%%%

In what follows, we will write, e.g., Algorithm~\ref{algorithm}.\ref{marking:A} to refer to the algorithm obtained
by employing Marking criterion~\ref{marking:A} in Step~(iv) of Algorithm~\ref{algorithm}.

%%%%%%%%%%%%%%%%%%%%
\section{Computing multilevel stochastic Galerkin approximations: implementation aspects} \label{sec:implement}
%%%%%%%%%%%%%%%%%%%%

The adaptive multilevel strategies outlined in section~\ref{sec:algorithms} are implemented within
the open-source MATLAB toolbox Stochastic T-IFISS~\cite{BespalovR_stoch_tifiss}.
The toolbox has been developed as an extension of the FEM software package T-IFISS~\cite{tifiss}
to compute stochastic Galerkin approximations of PDE problems with parametric or uncertain inputs.
Overall, this software aims at creating an environment
for testing different discretization and error estimation strategies,
exploring new algorithms, as well as for replication, validation and verification of computational results
(see~\cite{brs20} for a recent review).

In this section, we briefly discuss some implementation aspects of the multilevel stochastic Galerkin FEM.
In particular, we focus on assembling components of the Galerkin matrix and solving the resulting linear system.

%%%%%%%%%%%%%%%%%%%%
\subsection{Matrix formulation of the multilevel stochastic Galerkin FEM} \label{sec:matrix:form}
%%%%%%%%%%%%%%%%%%%%

For each $\mu \in \PPP_\coarse$,
we denote by $N_{\coarse\mu}$ the dimension of the finite element space $\X_{\coarse\mu} = \SS^1_0(\TT_{\coarse\mu})$
(i.e., $N_{\coarse\mu} = \# (\NN_{\coarse\mu} \setminus \partial D)$).
Recalling~\eqref{eq:def:V}, the multilevel stochastic Galerkin approximation $\uu_\coarse \in \V_\coarse$
can be represented as follows:
\begin{equation} \label{eq:u:decomp}
   \uu_\coarse(x, \y)
   = \sum_{\mu \in \PPP_\coarse} \sum_{j=1}^{N_{\coarse\mu}} u_{\coarse\mu,z_j} \varphi_{\coarse\mu,z_j}(x) P_{\mu}(\y).
\end{equation}
Hence, by taking test functions $\vv_\coarse = \varphi_{\coarse\nu,z_i} P_{\nu}$ for all $\nu \in \PPP_\coarse$
and all $i = 1,2,\ldots,N_{\coarse\nu}$,
the discrete formulation~\eqref{eq:discrete_formulation} yields a linear system $\sfA \sfu = \sfb$
for finding the unknown coefficients $u_{\coarse\mu,z_j} \in \R$ in~\eqref{eq:u:decomp}.

Since the approximation space $\V_\coarse$ is built from tensor products of different subspaces
of $\X = H^1_0(D)$ and $\P = L^2_\pi(\Gamma)$ (see~\eqref{eq:def:V}),
the matrix $\sfA$ and the vectors $\sfu$ and $\sfb$ have block structure,
with individual blocks indexed by multi-indices of~$\PPP_\coarse$ as follows:
\[
   \R^{N_\coarse \times N_\coarse} \ni \sfA = (\sfA_{\nu \mu})_{\nu,\mu \in \PPP_\coarse},\qquad
   \R^{N_\coarse} \ni \sfb = (\sfb_{\nu})_{\nu \in \PPP_\coarse},\qquad
   \R^{N_\coarse} \ni \sfu = (\sfu_{\mu})_{\mu \in \PPP_\coarse},
\]
where
$N_\coarse := \dim \V_\coarse = \sum_{\nu \in \PPP_\coarse} N_{\coarse\nu}$,
\begin{equation*}
[\sfA_{\nu \mu}]_{i j} = [\sfA_{\mu \nu}]_{j i} =
B(\varphi_{\coarse\mu,z_j}P_{\mu}, \varphi_{\coarse\nu,z_i}P_{\nu}),\qquad % \\
[\sfb_{\nu}]_i = F(\varphi_{\coarse\nu,z_i} P_{\nu}),\qquad %\\
[\sfu_{\mu}]_j = u_{\coarse\mu,z_j}
\end{equation*}
for $i = 1, \dots, N_{\coarse\nu}$ and $j = 1, \dots, N_{\coarse\mu}$.
Hence, recalling~\eqref{def:B0},~\eqref{def:B},~\eqref{eq1:lemma:orthogonal} and~\eqref{eq:weakform}, we find
\begin{equation*}
   \begin{split}
       [\sfA_{\nu \mu}]_{i j}
       & =
       \delta_{\nu\mu} \int_D a_0(x) \nabla \varphi_{\coarse\mu,z_j}(x)\cdot\nabla \varphi_{\coarse{\nu},z_i}(x)\,\d{x} \\
       & \quad
       + \sum_{m=0}^{\infty} \int_\Gamma y_m P_{\mu}(\y) P_{\nu}(\y) \,\d\pi(\y)
       \int_D a_m(x) \nabla \varphi_{\coarse\mu,z_j}(x)\cdot\nabla \varphi_{\coarse{\nu},z_i}(x)\, \d{x}
   \end{split}
\end{equation*}
and
\[
   [\sfb_{\nu}]_i =
   \int_\Gamma \int_D \ff(x,\y) \varphi_{\coarse\nu,z_i}(x) P_{\nu}(\y) \, \d{x} \, \d\pi(\y).
\]
Thus, for all $\nu,\mu \in \PPP_\coarse$, the $\nu\mu$-th block in the Galerkin matrix $\sfA$ is given by
\begin{equation} \label{eq:A:block}
   \sfA_{\nu \mu}
   = \sum_{m=0}^{\infty} [G_m]_{\nu\mu} K_m^{\nu\mu}
   = \sum_{m=0}^{M} [G_m]_{\nu\mu} K_m^{\nu\mu},
\end{equation}
where, for $m \in \N_0$,
\begin{equation} \label{eq:G:matrix}
[G_m]_{\nu \mu} =
\begin{cases}
\delta_{\nu\mu} & \text{if } m=0,\\
\int_\Gamma y_m P_{\mu}(\y) P_{\nu}(\y) \, \d\pi(\y)
       \reff{eq:3005:three-term}=
\beta_{\mu_m}^m \delta_{\mu+\eps_m,\nu} + \beta_{\mu_m-1}^m \delta_{\mu-\eps_m,\nu}
& \text{if } m \in \N
\end{cases}
\end{equation}
and
$K_m^{\nu\mu}$ are the finite element (stiffness) matrices defined by
\begin{equation} \label{eq:K:matrix}
   [K_m^{\nu\mu}]_{ij} =
   \int_D a_m(x) \nabla \varphi_{\coarse\mu,z_j}(x)\cdot\nabla \varphi_{\coarse{\nu},z_i}(x) \, \d{x}
\end{equation}
for $i = 1,\ldots,N_{\coarse\nu}$ and $j = 1,\ldots,N_{\coarse\mu}$,
whereas
$M = \# \supp(\PPP_\coarse)$ is the number of active parameters in $\PPP_\coarse$;
here, we used the fact that $G_m = 0$ for all $m \notin \supp(\PPP_\coarse)$
(due to the symmetry of the measure $\pi_m$ on $\Gamma_m = [-1,1]$ for all $m \in \N$)
and implicitly assumed that $\supp(\PPP_\coarse) = \{1,2,\ldots,M\}$.
For a detailed study of the properties of the matrices $\{G_m\}_{m=1}^{M}$, we refer, e.g., to~\cite{eu2010}.

At first glance, there are $(M+1) (\# \PPP_\coarse)^2$
stiffness matrices to compute; see~\eqref{eq:A:block}.
However, as discussed in~\cite[Section~3.1]{cpb18+}, the actual number of matrices that need to be computed
is significantly less.
Indeed, it follows from~\eqref{eq:A:block} that
one only needs to compute the matrix $K_m^{\nu\mu}$ if the corresponding entry $[G_m]_{\nu\mu}$ is nonzero.
The matrices $G_m$ are very sparse:
while $G_0$ is the identity matrix, it follows from~\eqref{eq:G:matrix} that the matrices $\{G_m\}_{m=1}^{M}$
have at most two nonzero entries per row (see also~\cite[Theorem~9.59]{lps2014}).
This reduces the number of stiffness matrices to be computed to $(2M + 1) \# \PPP_\coarse$ at most.
Furthermore, since the measure $\pi_m$ is symmetric on $\Gamma_m = [-1,1]$ for all $m \in \N$,
the matrices $G_m$, $m \in \N$, are also symmetric and have zero diagonal entries.
In addition to the sparsity and symmetry of $G_m$, we observe that
$K_m^{\nu\mu} = (K_m^{\mu\nu})^{\sf T}$ for all $m = 0,1, \ldots, M$ and $\nu,\mu \in \PPP_\coarse$.
Therefore, the number of stiffness matrices one actually needs to compute is at most $(M + 1) \# \PPP_\coarse$.

%%%%%%%%%%%%%%%%%%%%
\subsection{Computation of stiffness matrices} \label{sec:assembly}
%%%%%%%%%%%%%%%%%%%%

Let us now address the computation of the stiffness matrices $K_m^{\nu\mu}$ given by~\eqref{eq:K:matrix}.
To that end, we fix $m \in \{1,2,\ldots,M\}$ (the computation process is the same for each $m$)
and set $\mu = \nu \pm \eps_m \in \PPP_\coarse$ for some $\nu \in \PPP_\coarse$; cf.~\eqref{eq:G:matrix}.
Note that the entries of $K_m^{\nu\mu}$ are the spatial integrals involving finite element basis functions
associated with the meshes $\TT_{\coarse\nu}$ and $\TT_{\coarse\mu}$, which may be different and not necessarily nested.
As a consequence, $K_m^{\nu\mu}$ are in general non-square if $\TT_{\coarse\nu} \neq \TT_{\coarse\mu}$,
and efficient computation of these matrices is the main difficulty in the implementation of
the multilevel stochastic Galerkin FEM.

The assembly of stiffness matrices in the context of the multilevel stochastic Galerkin FEM
has been previously discussed in~\cite{gittelson13,egsz14,cpb18+}.
In~\cite{gittelson13}, the action of any non-square stiffness matrix $K_m^{\nu(\nu\pm\eps_m)}$
(in the context, e.g., of the preconditioned conjugate gradient method) is \emph{approximated}
via a projection $\Pi_{\nu}^{\nu\pm\eps_m}:\, \X_{\coarse (\nu\pm\eps_m)} \to \X_{\coarse\nu}$,
such that only square matrices $K_m^{\nu\nu}$ need to be assembled.
A more elaborate and computationally expensive approach involving the union of meshes
$\TT_{\coarse\nu}$ and $\TT_{\coarse (\nu\pm\eps_m)}$ is proposed in~\cite[Section~10]{egsz14}.
Again, only square stiffness matrices need to be assembled.
On the other hand, assuming that the meshes $\{\TT_{\coarse\nu}:\; \nu \in \PPP_\coarse\}$
(and, hence, the corresponding finite element spaces $\X_{\coarse\nu}$ in~\eqref{eq:def:V}) are \emph{nested},
it is shown in~\cite{cpb18+} that non-square stiffness matrices $K_m^{\nu\mu}$
can be computed quickly and efficiently without resorting to approximations involving square matrices.

In our implementation,
we aim for \emph{direct} computation of non-square stiffness matrices
$K_m^{\nu\mu}$ for a pair of general, not necessarily nested, meshes $\TT_{\coarse\nu} \neq \TT_{\coarse\mu} \in \refine(\TT_0)$
($\nu,\, \mu = \nu\pm\eps_m \in \PPP_\coarse$).

First, exploiting the fact that the finite element basis functions $\varphi_{\coarse\nu,z}$ in our construction of $\V_\coarse$
are piecewise linear, we find
\begin{align}
      [K_m^{\nu\mu}]_{ij}
      & \reff{eq:K:matrix}=
      \int_D a_m(x) \nabla \varphi_{\coarse\mu,z_j} \cdot \nabla \varphi_{\coarse\nu,z_i} \, \d{x}
      \nonumber
      \\
      & \refp{eq:K:matrix}=
      \sum_{T_\nu \in \TT_{\coarse\nu}}
      \sum_{\stackrel{T_\mu \in \TT_{\coarse\mu}}{|T_\mu \cap T_\nu| \neq 0}}
      \big(\nabla \varphi_{\coarse\mu,z_j}|_{T_\mu} \cdot \nabla \varphi_{\coarse\nu,z_i}|_{T_\nu}\big)
      \int_{T_\mu \cap T_\nu} a_m(x) \, \d{x}.
      \label{eq:assembly}
\end{align}
Thus, efficient identification of all intersections $T_\mu \cap T_\nu$ is critical for the whole computation.
The key observation here is that NVB is a binary refinement rule.
Note that every element $T \in \TT_\coarse \in \refine(\TT_0)$ naturally comes with a
\emph{level} that can be defined in the following inductive way:
\begin{itemize}
\item
for all $T \in \TT_0$, define $\level(T) := 0$;
\item
if $T \in \TT_\coarse \in \refine(\TT_0)$ is bisected into two elements $T_1$ and $T_2$,
then define $\level(T_1) := \level(T) + 1 =: \level(T_2)$.
\end{itemize}
Now, for any
$T \in \TT_\coarse \in \refine(\TT_0)$,
we denote by $T_0(T)$ the unique element of the initial mesh $\TT_0$
such that $T \subseteq T_0(T)$.
Then, the above definition implies that
\begin{equation}\label{eq:level}
   \vert T \vert / \vert T_0(T) \vert = 2^{-\level(T)}.
\end{equation}
Furthermore, there holds the following lemma, which, in particular,
proves that the intersection $T_\mu \cap T_\nu$ is either $T_\mu$, or $T_\nu$, or
a set of measure zero.

%%%%%%%%%%%%%%%%%%%%
\begin{lemma}
Let $\TT_\bullet, \TT_\bullet' \in \refine(\TT_0)$.
Let $T \in \TT_\bullet$ and $T' \in \TT_\bullet'$.
Let $s_T \in T$ denote the center of mass of $T$.
Then, there hold the following statements~\rm(i)--(ii):
\begin{itemize}
\item[\rm(i)] If $\level(T) = \level(T')$, then there holds either $T = T'$ or $|T \cap T'| = 0$.
Moreover, $T = T'$ is equivalent to $s_T \in {\rm interior}(T')$.
\item[\rm(ii)] If $\level(T) > \level(T')$, then there holds either $T \subsetneqq T'$ or $|T \cap T'| = 0$.
Moreover, $T \subsetneqq T'$ is equivalent to $s_T \in {\rm interior}(T')$.
\end{itemize}
\end{lemma}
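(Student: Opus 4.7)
The plan is to exploit the binary tree structure of NVB refinements of the fixed initial mesh $\TT_0$. Because newest vertex bisection is a \emph{deterministic} rule that depends only on the element being refined (through its reference edge, which is propagated in a fixed way from $\TT_0$), for every $T_0^* \in \TT_0$ there is a well-defined infinite binary tree whose root is $T_0^*$ and whose two children of any node $S$ are the two NVB-children of $S$. Any element $T$ appearing in any $\TT_\coarse \in \refine(\TT_0)$ is then a node of the unique such tree rooted at $T_0(T) \in \TT_0$, situated at depth $\level(T)$. Here I would rely on the standard references \cite{stevenson,kpp} to justify that the tree is well-defined.

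The first step is to establish the following structural dichotomy: for any two elements $T, T'$ belonging to (possibly different) NVB refinements of $\TT_0$, exactly one of the following holds. Either (a) $T_0(T) \neq T_0(T')$, in which case $|T \cap T'| = 0$ because the elements of $\TT_0$ have pairwise disjoint interiors; or (b) $T_0(T) = T_0(T')$, in which case $T$ and $T'$ are nodes of the same binary tree and a straightforward induction on the level shows that either $T = T'$, or one strictly contains the other (with the ancestor at strictly lower level), or they have disjoint interiors.

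With this dichotomy at hand, both cases of the lemma follow by simple bookkeeping via \eqref{eq:level}. For (i), the equality $\level(T) = \level(T')$ combined with \eqref{eq:level} gives $|T| = |T'|$ in case (b), which excludes strict containment in the tree and leaves only $T = T'$ or $|T \cap T'| = 0$. For (ii), the strict inequality $\level(T) > \level(T')$ rules out $T' \subsetneqq T$, leaving $T \subsetneqq T'$ or $|T \cap T'| = 0$. The characterisations via the centre of mass are then immediate: since $s_T \in {\rm interior}(T)$, we have $s_T \in {\rm interior}(T')$ if and only if ${\rm interior}(T) \cap {\rm interior}(T') \neq \emptyset$, which is equivalent to $|T \cap T'| > 0$; combined with the dichotomies above, this yields $T = T'$ in (i) and $T \subsetneqq T'$ in (ii), as claimed.

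The main obstacle, modest as it is, lies in making the binary-tree picture of NVB precise and in justifying that the outcome of bisecting a given element depends only on that element and not on the surrounding mesh. Once this is accepted as a well-known property of NVB, the remainder of the proof is a short case distinction combined with elementary measure-theoretic reasoning.
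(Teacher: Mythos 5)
Your proof is correct and follows essentially the same route as the paper's: the paper likewise asserts the four-way dichotomy for $T\cap T'$ coming from the binary tree structure of NVB, uses \eqref{eq:level} to see that the level determines the element's size (thus excluding the inappropriate containment), and notes that the center of mass always lies in the interior of all NVB ancestors. You merely fill in the details (the induction over the tree and the case $T_0(T)\neq T_0(T')$) that the paper leaves implicit.
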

%%%%%%%%%%%%%%%%%%%%

%%%%%%%%%%%%%%%%%%%%
\begin{proof}
Since NVB is a binary refinement rule, the intersection
$T \cap T'$ satisfies one of the following four conditions:
\begin{itemize}
\item $|T \cap T'| = 0$;
\item $T \cap T' = T = T'$;
\item $T \cap T' = T \subsetneqq T'$;
\item $T \cap T' = T' \subsetneqq T$. 
\end{itemize}
Due to~\eqref{eq:level}, knowing the element's level is sufficient for determining its size.
Moreover, the center of mass of an element always lies in the interior of all of its NVB ancestors. 
\end{proof}
%%%%%%%%%%%%%%%%%%%%

Thus, given two meshes $\TT_{\coarse\nu}, \TT_{\coarse\mu} \in \refine(\TT_0)$ for $\mu \neq \nu$,
the computation of the matrix entries $[K_m^{\nu\mu}]_{ij}$ in~\eqref{eq:assembly} essentially boils down to the construction of two sets
$\UU_{\nu\mu},\, \UU_{\mu\nu}^\circ \subset \TT_{\coarse\nu} \times \TT_{\coarse\mu}$
satisfying the following properties (U1)--(U3):
\begin{itemize}
\item[(U1)]
For all $(T_{\nu},T_{\mu}) \in \, \UU_{\nu\mu}$, there holds $T_{\nu} \subseteq T_{\mu}$;
\item[(U2)]
For all $(T_{\nu},T_{\mu}) \in \, \UU_{\mu\nu}^\circ$, there holds $T_{\mu} \subsetneqq T_{\nu}$;
\item[(U3)]
$\TT_{\coarse\nu} \oplus \TT_{\coarse\mu}
:= \{ T_{\nu} : (T_{\nu},T_{\mu}) \in \, \UU_{\nu\mu} \}
\cup \{ T_{\mu} : (T_{\nu},T_{\mu}) \in \, \UU_{\mu\nu}^\circ \}$
is a mesh\footnote{Note that the notation
used in~(U3) is deliberate, in the sense that
$\TT_{\coarse\nu} \oplus \TT_{\coarse\mu}$ is indeed the overlay
of the meshes $\TT_{\coarse\nu}$ and $\TT_{\coarse\mu}$
(i.e., their coarsest common refinement).}
of $D$.
\end{itemize}
Indeed, with the sets $\UU_{\nu\mu},\, \UU_{\mu\nu}^\circ$ at hand, the formula~\eqref{eq:assembly}
for computing $[K_m^{\nu\mu}]_{ij}$ can be written as follows:
\begin{align*}
[K_m^{\nu\mu}]_{ij}
& = 
\sum_{(T_{\nu},T_{\mu}) \in \, \UU_{\nu\mu}}
\big( \nabla \varphi_{\bullet\mu,z_j} \vert_{T_{\mu}} \cdot \nabla \varphi_{\bullet\nu,z_i} \vert_{T_{\nu}} \big)
\int_{T_{\nu}} a_m(x) \, \d{x} \\
& \quad\ + \sum_{(T_{\nu},T_{\mu}) \in \, \UU_{\mu\nu}^\circ}
\big( \nabla \varphi_{\bullet\mu,z_j} \vert_{T_{\mu}} \cdot \nabla \varphi_{\bullet\nu,z_i} \vert_{T_{\nu}} \big)
\int_{T_{\mu}} a_m(x) \, \d{x}.
\end{align*}
The following searching algorithm provides a simple and surprisingly
effective strategy for constructing the sets
$\UU_{\nu\mu}$ and $\UU_{\mu\nu}^\circ$.
In this algorithm, for each simplex $T$, we denote by
$s_{T} \in T$ the center of mass of $T$.
Furthermore,
we denote by $\lambda_{T,1}(x)$, $\lambda_{T,2}(x)$, $\lambda_{T,3}(x)$
the barycentric coordinates of $x \in D$ with respect to $T$,
i.e., $x = \sum_{j=1}^3 \lambda_{T,j}(x) z_{T,j}$
and $\sum_{j=1}^3 \lambda_{T,j}(x) =1$,
where
$z_{T,1}$, $z_{T,2}$, $z_{T,3}$
are the vertices of $T$.
We recall that
$\lambda_{T,j}(x)$ are uniquely defined for given $x$ and $T$, and
$x \in T$ is equivalent to $\lambda_{T,1}(x), \lambda_{T,2}(x), \lambda_{T,3}(x) \ge 0$.

%%%%%%%%%%%%%%%%%%%%
\begin{algorithm}[construction of $\UU_{\nu\mu}$ and $\UU_{\mu\nu}^\circ$] \label{alg:mesh_intersection}
\textbf{Input:} Meshes $\TT_{\coarse\nu}$ and $\TT_{\coarse\mu}$.
\begin{algorithmic}[1]
\FORALL{$T_0 \in \TT_0$}
\STATE Define $\TT_{\coarse\nu} \vert_{T_0} : = \{T_{\nu} \in \TT_{\coarse\nu} :
T_{\nu} \subseteq T_0\} \subseteq \TT_{\coarse\nu}$.
\STATE Define $\TT_{\coarse\mu} \vert_{T_0} : = \{T_{\mu} \in \TT_{\coarse\mu} :
T_{\mu} \subseteq T_0 \} \subseteq \TT_{\coarse\mu}$.
\FORALL{$T_\nu \in \TT_{\coarse\nu} \vert_{T_0}$}
\STATE Define $\mathcal{V}_{\coarse\mu}(T_{\nu}) := \{T_{\mu} \in \TT_{\coarse\mu} \vert_{T_0} :
\level(T_{\mu}) \leq \level(T_{\nu})\} \subseteq \TT_{\coarse\mu} \vert_{T_0}$.
\STATE Compute $\lambda_{T_{\mu},i}(s_{T_{\nu}})$ for all $i=1,2,3$ and $T_{\mu} \in \mathcal{V}_{\coarse\mu}(T_{\nu})$.
\IF{there exists (a unique) $T_{\mu} \in \mathcal{V}_{\coarse\mu}(T_{\nu})$ with $\lambda_{T_{\mu},i}(s_{T_{\nu}})>0$ for all $i=1,2,3$}
\STATE Assign $(T_{\nu},T_{\mu})$ to $\UU_{\nu\mu}$ (because $T_{\nu} \subseteq T_{\mu}$).
\ELSE
\STATE Compute $\lambda_{T_{\nu},i}(s_{T_{\mu}})$ for all $i=1,2,3$ and $T_{\mu} \in \TT_{\coarse\mu} \vert_{T_0} \setminus \mathcal{V}_{\coarse\mu}(T_{\nu})$.
\STATE Define $\mathcal{W}_{\coarse\mu}(T_{\nu}) := \{ T_{\mu} \in \TT_{\coarse\mu} \vert_{T_0} \setminus \mathcal{V}_{\coarse\mu}(T_{\nu}) : \lambda_{T_{\nu},i}(s_{T_{\mu}})>0 \text{ for all } i=1,2,3 \}$.
\STATE Assign $(T_{\nu},T_{\mu})$ to $\UU_{\mu\nu}^\circ$ for all $T_{\mu} \in \mathcal{W}_{\coarse\mu}(T_{\nu})$
(because $T_{\mu} \subsetneqq T_{\nu}$ if $T_{\mu} \in \mathcal{W}_{\coarse\mu}(T_{\nu})$).
\ENDIF
\ENDFOR
\ENDFOR
\end{algorithmic}
\textbf{Output:} Sets $\UU_{\nu\mu}$ and $\UU_{\mu\nu}^\circ$ satisfying \rm(U1)--(U3).
\end{algorithm}
%%%%%%%%%%%%%%%%%%%%

Algorithm~\ref{alg:mesh_intersection}
has a computational complexity of $\mathcal{O}((\#\TT_{\coarse\nu})(\#\TT_{\coarse\mu}))$
in the worst case.
However, its only intention is to show that unlike \cite{egsz14}
it is possible to compute stiffness matrices associated to different meshes exactly (up to quadrature).
We conjecture that
one can build the matrix $K_m^{\nu\mu}$ from \eqref{eq:assembly}
in log-linear complexity
$\mathcal{O}((\#\TT_{\coarse\nu} +\#\TT_{\coarse\mu}) \log (\#\TT_{\coarse\nu} +\#\TT_{\coarse\mu}))$
by
exploiting the binary tree structure of NVB.
This aspect of the implementation will be the subject of future research.

%%%%%%%%%%%%%%%%%%%%
\subsection{Numerical solution of Galerkin system} \label{sec:solver}
%%%%%%%%%%%%%%%%%%%%

Efficient linear solver is an important ingredient of any stochastic Galerkin implementation.
Sparse factorizations of the (full) system matrix $\sfA$ are memory intensive and computationally costly,
therefore, performing those efficiently is not feasible.
In fact, the coefficient matrix $\sfA$ is never explicitly assembled in stochastic Galerkin FEM implementations
(see, e.g.,~\cite{egsz14,cpb18+,brs20}).
Instead, `matrix-free' iterative solvers are employed, where the matrix-vector products with $\sfA$
are computed blockwise from individual matrix components of $\sfA$ as~follows:
\[
   [\sfA \sfx]_\nu = \sum_{\mu \in \PPP_\coarse} \sfA_{\nu\mu} \sfx_\mu
   \reff{eq:A:block}=
    \sum_{\mu \in \PPP_\coarse} \sum_{m=0}^{M} [G_m]_{\nu\mu} K_m^{\nu\mu} \sfx_\mu,
    \quad
    \sfx = (\sfx_\mu)_{\mu \in \PPP_\coarse},\ \
    \nu \in \PPP_\coarse.
\]
The default iterative solver in Stochastic T-IFISS is a bespoke implementation of the Minimum Residual method,
called EST\!$\_$\!MINRES~\cite{ss11}
(an alternative solver based on the conjugate gradient method and
utilizing the built-in MATLAB function {\tt pcg} is included as an option).

For the iterative solver to be fast, it requires a suitably chosen preconditioner.
In the context of stochastic Galerkin FEM, particularly for parametric PDEs with coefficients having linear dependence on the parameters,
the mean-based preconditioner~\cite{ghanemkruger96,elmanpowell2009} is a standard choice
(for alternative approaches, we refer, e.g., to~\cite{ullmann2010,sg2014,bly2021}).
Specifically, we employ a block-diagonal preconditioner with diagonal blocks given by
the stiffness matrices $K_0^{\nu\nu}$, $\nu \in \PPP_\coarse$, defined in~\eqref{eq:K:matrix}.
Thus, the action of the inverse of the preconditioner on residual vectors can be effected blockwise.
For each $\nu \in \PPP_\coarse$, this is done
by computing sparse triangular factorizations of $K_0^{\nu\nu}$, followed by forward and backward substitutions
on the corresponding block of the residual vector.
In agreement with theoretical results in~\cite{elmanpowell2009} for the \emph{single-level} stochastic Galerkin FEM,
our experiments with \emph{multilevel} approximations have shown
that the number of preconditioned EST\!$\_$\!MINRES iterations needed to satisfy
the default tolerance of \num{e-9} is less than~20, independent of $\#\PPP_\coarse$ and
the resolution of finite element meshes in the multilevel construction.

%%%%%%%%%%%%%%%%%%%%
\section{Numerical experiments} \label{sec:numerics}
%%%%%%%%%%%%%%%%%%%%

In this section, we present a collection of numerical results
that illustrate the effectiveness of the error estimation strategy developed in section~\ref{sec:aposteriori} and demonstrate
the performance of the multilevel adaptive algorithms described in section~\ref{sec:algorithms}.
Here, we stay within the context of the two-dimensional diffusion problem~\eqref{eq:strongform}
with the parametric coefficient $\aa = \aa(x,\y)$ in the affine form~\eqref{eq1:a} satisfying assumptions~\eqref{eq2:a}--\eqref{eq3:a}.
In addition, we assume that the parameters $\y = (y_m)_{m \in \N}$
are images of independent uniformly distributed mean-zero random variables on $[-1,1]$,
i.e., $\d\pi_m(y_m) = \d y_m /2$ for all $m \in \N$.
All computations have been performed using the MATLAB toolbox Stochastic T-IFISS;
see section~\ref{sec:implement}.

In our experiments, we use five adaptive algorithms:
two multilevel algorithms with separate spatial and parametric enrichments
(i.e., Algorithms~\ref{algorithm}.\ref{marking:A} and~\ref{algorithm}.\ref{marking:B} from section~\ref{sec:algorithms}),
their single-level precursors (see, e.g., Algorithms~4.A and~4.B in~\cite{bprr18++}, respectively),
and the novel multilevel algorithm with combined enrichment (Algorithm~\ref{algorithm}.\ref{marking:C}).
For the sake of brevity, we will refer to these five algorithms as
\texttt{ML-A}, \texttt{ML-B}, \texttt{SL-A}, \texttt{SL-B}, and \texttt{ML-C}, respectively.
The parameters in these algorithms are selected as follows:
\begin{itemize}
\item[$\bullet$]
We set the marking parameters
$\theta_\X = \theta_\PPP = 0.5$ in \texttt{ML-A}, \texttt{ML-B}, \texttt{SL-A}, \texttt{SL-B}
and $\theta = 0.5$ in~\texttt{ML-C}.
\item[$\bullet$]
For the parameter $\overline{M}$ in~\eqref{def:Q},
we choose $\overline{M}=1$ in~\S\ref{sec:eigel}
and $\overline{M}=9$ in~\S\ref{sec:cookie}.
\item[$\bullet$]
Except in the last experiment in~\S\ref{sec:cookie},
the parameter $\vartheta$ modulating the choice of the enrichment type
in the algorithms with separate spatial and parametric enrichments
(i.e., \texttt{ML-A}, \texttt{ML-B} and
\texttt{SL-A}, \texttt{SL-B}) is chosen to be $\vartheta = 1$.
\end{itemize}

%%%%%%%%%%%%%%%%%%%%
\subsection{Benchmark problem} \label{sec:eigel}
%%%%%%%%%%%%%%%%%%%%

The following problem has been considered in several works
addressing the numerical approximation of parametric PDEs
(see, e.g., in~\cite{egsz14,egsz15,bs16,em16,br18,cpb18+,bprr18++})
and has thus become a benchmark problem for testing novel discretization strategies.
Let $\ff \equiv 1$ in~\eqref{eq:strongform} and choose the expansion coefficients in~\eqref{eq1:a}
to represent planar Fourier modes of increasing total order;
for $x = (x_1,x_2)$, these coefficients are given by
\begin{equation*}
a_0(x) = 1,
\quad
a_m(x_1,x_2) = A m^{- \sigma} \cos(2\pi\beta_1(m)x_1) \cos(2\pi\beta_2(m)x_2)
\text{ \ for } m \in \N,
\end{equation*}
where $A,\sigma>0$ are constants,
$\beta_1(m) = m - k(m)[k(m)+1]/2$,
$\beta_2(m) = k(m) - \beta_1(m)$,
and $k(m) = \lfloor -1/2 + \sqrt{1/2+2m} \rfloor$.
With this choice, the diffusion coefficient $\aa(x,\y)$ trivially satisfies~\eqref{eq2:a}
with $a_0^{\rm min} = a_0^{\rm max} = 1$.
Furthermore, we set $\sigma = 2$ (yielding a slow decay of the coefficients)
and choose $A = 0.9 / \zeta(\sigma) \approx 0.547$,
so that both inequalities in~\eqref{eq3:a} are satisfied
(here, $\zeta(\cdot)$ denotes the Riemann zeta function).

%%%%%%%%%%%%%%%%%%%%
\subsubsection{Square domain} \label{sec:eigel-square}
%%%%%%%%%%%%%%%%%%%%

Let us numerically solve the benchmark problem on the square domain $D = (0,1)^2$.
For all algorithms, we choose the initial mesh $\TT_0$ to be a uniform mesh of
512 right-angled triangles
and we terminate computations when the error estimate $\est_\ell$ given by~\eqref{eq:def:tau} falls below
the tolerance ${\tt tol} = \num{6e-4}$.

%%%%%%%%%%%%%%%%%%%%
% effectivity indices
\begin{figure}[t!]
\begin{tikzpicture}
\pgfplotstableread{data/pb2-sl-a.dat}{\one}
\pgfplotstableread{data/pb2-sl-b.dat}{\two}
\begin{semilogxaxis}
[
width = 7.5cm, height=6cm,								% figures's size
xlabel={number of DOFs, $N_\ell$}, 					% x label
ylabel={effectivity index, $\zeta_\ell$},				% y label
ymajorgrids=true, xmajorgrids=true, grid style=dashed,	% grid 
xmin=(0.95)*10^(2), xmax=(1.5)*10^(7),						% x-axis limit
ymin = 0.67,	 ymax = 0.88,							% y-axis limit
legend style={at={(0.5,0.97)},anchor=north, fill=none, draw=none}
]
\addplot[violet,mark=o,mark size=2.5pt]		table[x=dofs, y=effindices]{\one};
\addplot[orange,,mark=square,mark size=2.5pt]	table[x=dofs, y=effindices]{\two};
\legend{
\texttt{SL-A},
\texttt{SL-B},
}
\end{semilogxaxis}
\end{tikzpicture}
%%%%
\hfill
%%%%
\begin{tikzpicture}
\pgfplotstableread{data/pb2-ml-a.dat}{\three}
\pgfplotstableread{data/pb2-ml-b.dat}{\four}
\pgfplotstableread{data/pb2-ml-c.dat}{\five}
\begin{semilogxaxis}
[
width = 7.5cm, height=6cm,								% figures's size
xlabel={number of DOFs, $N_\ell$}, 					% x label
ylabel={effectivity index, $\zeta_\ell$},		% y label
ymajorgrids=true, xmajorgrids=true, grid style=dashed,	% grid 
xmin=(0.95)*10^(2), xmax=(1.05)*10^(6),						% x-axis limit
ymin = 0.67,	 ymax = 0.88,							% y-axis limit
legend style={at={(0.5,0.97)},anchor=north, fill=none, draw=none}
]
\addplot[red,mark=triangle,mark size=3.5pt]		table[x=dofs, y=effindices]{\three};
\addplot[blue,mark=diamond,mark size=3.5pt]		table[x=dofs, y=effindices]{\four};
\addplot[cyan,mark=square,mark size=3.5pt]		table[x=dofs, y=effindices]{\five};
\legend{
\texttt{ML-A},
\texttt{ML-B},
\texttt{ML-C},
}
\end{semilogxaxis}
\end{tikzpicture}
%%%%
\caption{
Experiments in section~\ref{sec:eigel-square}:
Effectivity indices $\zeta_\ell$ for the error estimates
$\est_\ell$ in the SGFEM approximations generated by single-level (left) and multilevel (right) adaptive algorithms.
}
\label{fig:pb2:effectivity_indices}
\end{figure}
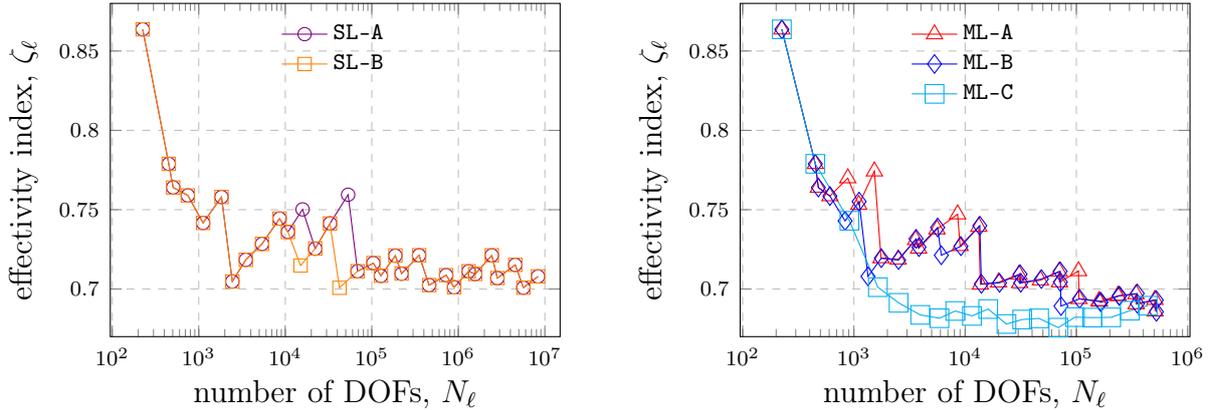
%%%%%%%%%%%%%%%%%%%%

In the first experiment, we assess the effectiveness of our error estimation strategy
by computing the error estimate $\est_\ell$ at each iteration of the adaptive loop and 
comparing $\est_\ell$ with the energy norm of the true error $\uu - \uu_\ell$ approximated by
\begin{equation*}
   \bnorm{\uu - \uu_\ell} =
   \big( \bnorm{\uu}^2 - \bnorm{\uu_\ell}^2 \big)^{1/2} \approx
   \big( \bnorm{\uu_{\mathrm{ref}}}^2 - \bnorm{\uu_\ell}^2 \big)^{1/2}.
\end{equation*}
Here, the equality follows from the Galerkin orthogonality
and the unknown energy $\bnorm{\uu}$ is approximated by the energy of
a sufficiently accurate reference solution $\uu_\mathrm{ref}$
computed with quadratic~(Q2) SGFEM approximations; cf.~\cite[Section~6]{bs16}.
The \emph{effectivity index}
\begin{equation*}
   \zeta_\ell :=
   \frac{\est_\ell}{\big(\bnorm{\uu_\mathrm{ref}}^2 - \bnorm{\uu_\ell}^2 \big)^{1/2}}
\end{equation*}
is then computed at each iteration of the adaptive loop.

In Figure~\ref{fig:pb2:effectivity_indices}, for all adaptive algorithms, we plot
the effectivity indices $\zeta_\ell$ versus the total number of degrees of freedom (DOFs) $N_\ell$
in SGFEM approximations.
For each algorithm, the effectivity indices vary in a range between 0.68 and 0.87 throughout all iterations.
The error is therefore slightly underestimated.
For single-level approximations generated by \texttt{SL-A} and \texttt{SL-B}, this is in agreement with
the results presented in~\cite[Figure~3]{bprr18++}.
Thus, this experiment provides a numerical evidence that
in terms of effectivity, our error estimation strategy for multilevel SGFEM approximations is on a par
with similar strategies for single-level approximations.
The presented results also suggest that by employing the
two-level spatial error estimates we underestimate
the true energy error more than by using hierarchical spatial estimates;
see~\cite{br18} and~\cite{cpb18+} for hierarchical spatial estimates in adaptive single-level and multilevel SGFEMs,
respectively.
However, the better accuracy of hierarchical estimators comes at the price of solving extra linear systems
when computing spatial contributions to the total error estimate at each iteration.

%%%%%%%%%%%%%%%%%%%%
% convergence rates
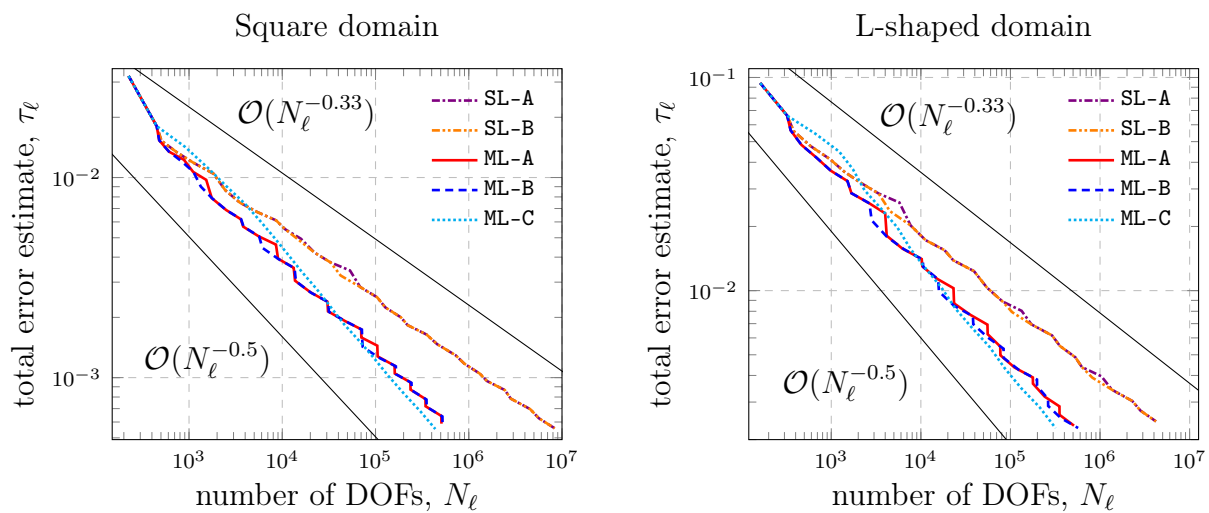
\begin{figure}[b!]
\begin{tikzpicture}
\pgfplotstableread{data/pb2-sl-a.dat}{\one}
\pgfplotstableread{data/pb2-sl-b.dat}{\two}
\pgfplotstableread{data/pb2-ml-a.dat}{\three}
\pgfplotstableread{data/pb2-ml-b.dat}{\four}
\pgfplotstableread{data/pb2-ml-c.dat}{\five}
\begin{loglogaxis}
[
width = 7.5cm, height=6.5cm,						% figure's size
title={Square domain},							% title
xlabel={number of DOFs, $N_\ell$}, 					% x label
ylabel={total error estimate, $\est_\ell$},				% y label
ymajorgrids=true, xmajorgrids=true, grid style=dashed,	% grid
xmin = (1.5)*10^(2),
xmax = (1.0)*10^(7),
ymin = (4.9)*10^(-4),
ymax = (3.5)*10^(-2),
legend style={legend pos=north east, legend cell align=left, fill=none, draw=none}
]
\addplot[violet,line width=1.0pt, densely dash dot]		table[x=dofs, y=error]{\one};
\addplot[orange,line width=1.0pt, densely dash dot dot]	table[x=dofs, y=error]{\two};
\addplot[red,line width=1.0pt]						table[x=dofs, y=error]{\three};
\addplot[blue,line width=1.0pt, densely dashed]			table[x=dofs, y=error]{\four};
\addplot[cyan,line width=1.0pt, densely dotted]			table[x=dofs, y=error]{\five};
\addplot[black,solid,domain=10^(2.2):10^(7.2)] { 0.22*x^(-0.33) };
\node at (axis cs:2.5e3,1.5e-2) [anchor=south west] {$\mathcal{O}(N_\ell^{-0.33})$};
\addplot[black,solid,domain=10^(1.2):10^(5.5)] { 0.16*x^(-0.5) };
\node at (axis cs:1e4,1.7e-3) [anchor=north east] {$\mathcal{O}(N_\ell^{-0.5})$};
\legend{
\texttt{SL-A},
\texttt{SL-B},
\texttt{ML-A},
\texttt{ML-B},
\texttt{ML-C},
}
\end{loglogaxis}
\end{tikzpicture}
%%%%
\hfill
%%%%
\begin{tikzpicture}
\pgfplotstableread{data/pb5-sl-a.dat}{\one}
\pgfplotstableread{data/pb5-sl-b.dat}{\two}
\pgfplotstableread{data/pb5-ml-a.dat}{\three}
\pgfplotstableread{data/pb5-ml-b.dat}{\four}
\pgfplotstableread{data/pb5-ml-c.dat}{\five}
\begin{loglogaxis}
[
width = 7.5cm, height=6.5cm,								% figure's size
title={L-shaped domain},	% title
xlabel={number of DOFs, $N_\ell$}, 						% x label
ylabel={total error estimate, $\est_\ell$},				% y label
ymajorgrids=true, xmajorgrids=true, grid style=dashed,		% grid 
xmin = (1.2)*10^(2),
xmax = (1.0)*10^(7.1),
ymin = (2.0)*10^(-3),
ymax = (1.1)*10^(-1),
legend style={legend pos=north east, legend cell align=left, fill=none, draw=none}
]
\addplot[violet,line width=1.0pt, densely dash dot]		table[x=dofs, y=error]{\one};
\addplot[orange,line width=1.0pt, densely dash dot dot]	table[x=dofs, y=error]{\two};
\addplot[red,line width=1.0pt]						table[x=dofs, y=error]{\three};
\addplot[blue,line width=1.0pt, densely dashed]			table[x=dofs, y=error]{\four};
\addplot[cyan,line width=1.0pt, densely dotted]			table[x=dofs, y=error]{\five};
\addplot[black,solid,domain=10^(2.2):10^(7.2)] { 0.75*x^(-0.33) };
\node at (axis cs:2.6e3,4.8e-2) [anchor=south west] {$\mathcal{O}(N_\ell^{-0.33})$};
\addplot[black,solid,domain=10^(2.0):10^(5.2)] { 0.6*x^(-0.5) };
\node at (axis cs:1e4,5e-3) [anchor=north east] {$\mathcal{O}(N_\ell^{-0.5})$};
\legend{
\texttt{SL-A},
\texttt{SL-B},
\texttt{ML-A},
\texttt{ML-B},
\texttt{ML-C},
}
\end{loglogaxis}
\end{tikzpicture}
%%%%
\caption{
Experiments in sections~\ref{sec:eigel-square} (left) and~\ref{sec:eigel-Lshaped} (right):
Total error estimates $\est_\ell$ versus the number of degrees of freedom $N_\ell$
for all adaptive algorithms.
}
\label{fig:pb2+5:rates}
\end{figure}
%%%%%%%%%%%%%%%%%%%%

Figure~\ref{fig:pb2+5:rates} (left) shows the decay of the error estimates $\est_\ell$
versus the total number of degrees of freedom $N_\ell$ in SGFEM approximations generated by five adaptive algorithms.
For single-level approximations,
the error estimates decay with suboptimal rate $\mathcal{O}(N_\ell^{-0.33})$;
the same rate was observed in~\cite{br18}.
For multilevel approximations, the decay rate is much faster.
In particular, for approximations generated by~\texttt{ML-C}, the error estimates decay with
the optimal rate $\mathcal{O}(N_\ell^{-0.5})$, which is the convergence rate of linear (P1) FEM for the corresponding parameter-free problem.
As a consequence, multilevel SGFEM approximations reach the prescribed accuracy with significantly less degrees of freedom
than their single-level counterparts
(in the asymptotic regime, the number of degrees of freedom in multilevel approximations are less by at least one order of magnitude
compared to the number of degrees of freedom in the single-level approximations having the same accuracy).

%%%%%%%%%%%%%%%%%%%%
\subsubsection{L-shaped domain} \label{sec:eigel-Lshaped}
%%%%%%%%%%%%%%%%%%%%

Let us now consider the benchmark problem on the L-shaped domain $D = (-1,1)^2 \setminus (-1,0]^2$.
In contrast to the problem in~\S\ref{sec:eigel-square}, the exact solution $\uu$ now exhibits a geometric singularity at the reentrant corner.
For this problem, we run all five adaptive algorithms with the same initial mesh $\TT_0$
(a uniform mesh of 384 right-angled triangles)
and the same stopping tolerance ${\tt tol} = \num{2.5e-3}$.

%%%%%%%%%%%%%%%%%%%%
\begin{figure}[b!]
%%%%
\begin{tikzpicture}
\pgfplotstableread{data/pb5-ml-a.dat}{\one}
\begin{loglogaxis}
[
width = 7.5cm, height=6.5cm,								% figure's size
title={\texttt{ML-A}},% title
xlabel={number of DOFs, $N_\ell$},						% x label
ylabel={error estimates},									% y label
ymajorgrids=true, xmajorgrids=true, grid style=dashed,		% grid 
xmin = (1.0)*10^(2),
xmax = (7.0)*10^(5),
ymin = (1.2)*10^(-3),
ymax = (1.4)*10^(-1),
legend style={legend pos=south west, legend cell align=left, fill=none, draw=none}
]
\addplot[blue,mark=triangle,mark size=3.0pt,line width=0.5]	table[x=dofs, y=error]{\one};
\addplot[red,mark=square,mark size=2.0pt,line width=0.5]		table[x=dofs, y=yp_one]{\one};
\addplot[cyan,mark=o,mark size=2.0pt,line width=0.5]		table[x=dofs, y=xq_one]{\one};
\addplot[orange,mark=o,mark size=2.0pt,line width=0.5]		table[x=dofs, y=truerr]{\one};
\addplot[black,solid,domain=10^(1.5):10^(7.8)] { 1.7*x^(-0.43) };
\node at (axis cs:3e4,2e-2) [anchor=south west] {$\mathcal{O}(N_\ell^{-0.43})$};
\legend{
$\est_\ell$,
{$\est_{\X_\ell}$},
{$\est_{\PPP_\ell}$},
{$\bnorm{\uu_\mathrm{ref}-\uu_\ell}$},
}
\end{loglogaxis}
\end{tikzpicture}
%%%%
\hfill
%%%%
\begin{tikzpicture}
\pgfplotstableread{data/pb5-ml-b.dat}{\one}
\begin{loglogaxis}
[
width = 7.5cm, height=6.5cm,								% figure's size
title={\texttt{ML-B}},% title
xlabel={number of DOFs, $N_\ell$},						% x label
ylabel={error estimates},									% y label
ymajorgrids=true, xmajorgrids=true, grid style=dashed,		% grid 
xmin = (1.0)*10^(2),
xmax = (7.0)*10^(5),
ymin = (1.2)*10^(-3),
ymax = (1.4)*10^(-1),
legend style={legend pos=south west, legend cell align=left, fill=none, draw=none}
]
\addplot[blue,mark=triangle,mark size=3.0pt,line width=0.5]	table[x=dofs, y=error]{\one};
\addplot[red,mark=square,mark size=2.0pt,line width=0.5]		table[x=dofs, y=yp_one]{\one};
\addplot[cyan,mark=o,mark size=2.0pt,line width=0.5]		table[x=dofs, y=xq_one]{\one};
\addplot[orange,mark=o,mark size=2.0pt,line width=0.5]		table[x=dofs, y=truerr]{\one};
\addplot[black,solid,domain=10^(1.5):10^(7.8)] { 1.7*x^(-0.43) };
\node at (axis cs:3e4,2e-2) [anchor=south west] {$\mathcal{O}(N_\ell^{-0.43})$};
\legend{
$\est_\ell$,
{$\est_{\X_\ell}$},
{$\est_{\PPP_\ell}$},
{$\bnorm{\uu_\mathrm{ref}-\uu_\ell}$},
}
\end{loglogaxis}
\end{tikzpicture}
%%%%
\hfill
%%%%
\begin{tikzpicture}
\pgfplotstableread{data/pb5-ml-c.dat}{\one}
\begin{loglogaxis}
[
width = 7.5cm, height=6.5cm,								% figure's size
title={\texttt{ML-C}},% title
xlabel={number of DOFs, $N_\ell$},						% x label
ylabel={error estimates},									% y label
ymajorgrids=true, xmajorgrids=true, grid style=dashed,		% grid 
xmin = (1.0)*10^(2),
xmax = (5.0)*10^(5),
ymin = (4.0)*10^(-5),
ymax = (1.4)*10^(-1),
legend style={legend pos=south west, legend cell align=left, fill=none, draw=none}
]
\addplot[blue,mark=triangle,mark size=3.0pt,line width=0.5]	table[x=dofs, y=error]{\one};
\addplot[red,mark=square,mark size=2.0pt,line width=0.5]		table[x=dofs, y=yp_one]{\one};
\addplot[cyan,mark=o,mark size=2.0pt,line width=0.5]		table[x=dofs, y=xq_one]{\one};
\addplot[orange,mark=o,mark size=2.0pt,line width=0.5]		table[x=dofs, y=truerr]{\one};
\addplot[black,solid,domain=10^(1.5):10^(7.8)] { 2.9*x^(-0.5) };
\node at (axis cs:3e4,1.5e-2) [anchor=south west] {$\mathcal{O}(N_\ell^{-0.5})$};
\legend{
$\est_\ell$,
{$\est_{\X_\ell}$},
{$\est_{\PPP_\ell}$},
{$\bnorm{\uu_\mathrm{ref}-\uu_\ell}$},
}
\end{loglogaxis}
\end{tikzpicture}
%%%%
\caption{
Experiments in section~\ref{sec:eigel-Lshaped}:
Decay of the error estimates (total, spatial, and parametric) and the reference errors
computed at each iteration of the adaptive multilevel algorithms.
}
\label{pb5:multilevel_decay}
\end{figure}
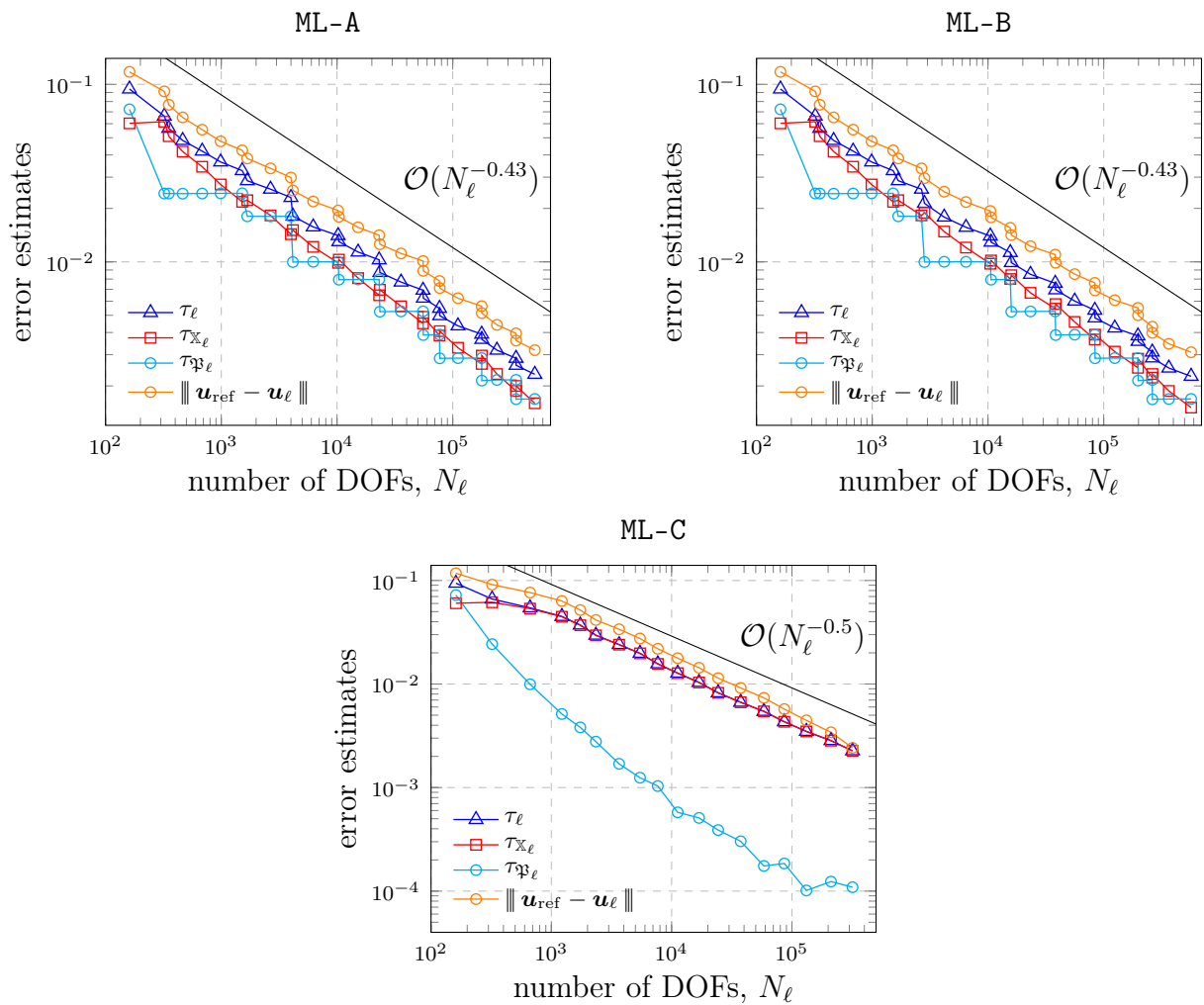
%%%%%%%%%%%%%%%%%%%%

In Figure~\ref{fig:pb2+5:rates} (right), for all adaptive algorithms,
we plot the error estimates $\est_\ell$ against the number of degrees of freedom $N_\ell$.
Despite the singular behavior of the exact solution, we observe the same empirical convergence rates as
in the previous experiment on the square domain.
In particular, the error estimates for all multilevel approximations decay much faster than those for single-level approximations,
while the latter converge with suboptimal rate $\mathcal{O}(N_\ell^{-0.33})$.

\begin{scriptsize}
%%%%%%%%%%%%%%%%%%%%
\begin{table}[t]
\setlength\tabcolsep{4pt} 
\begin{center} 
\renewcommand{\arraystretch}{1.45}
\begin{tabular}{r !{\vrule width 1.0pt} c  c !{\vrule width 1.0pt} c c !{\vrule width 1.0pt} c c} 
\noalign{\hrule height 1.0pt}
&\multicolumn{2}{c!{\vrule width 1.0pt}}{\texttt{ML-A}} 
&\multicolumn{2}{c!{\vrule width 1.0pt}}{\texttt{ML-B}}
&\multicolumn{2}{c}{\texttt{ML-C}} \\	
\noalign{\hrule height 1.0pt}
$L$						&\multicolumn{2}{c!{\vrule width 1.0pt}}{28}		
						&\multicolumn{2}{c!{\vrule width 1.0pt}}{28}
						&\multicolumn{2}{c}{17}\\[-5pt]
$\est_L$					&\multicolumn{2}{c!{\vrule width 1.0pt}}{$2.32526 \cdot 10^{-3}$}
						&\multicolumn{2}{c!{\vrule width 1.0pt}}{$2.26684 \cdot 10^{-3}$}
						&\multicolumn{2}{c}{$2.26429 \cdot 10^{-3}$}\\[-5pt]
$N_L$					&\multicolumn{2}{c!{\vrule width 1.0pt}}{\num{511812}}
						&\multicolumn{2}{c!{\vrule width 1.0pt}}{\num{569321}}
						&\multicolumn{2}{c}{\num{318897}}\\[-5pt]
$\#\PPP_L$				&\multicolumn{2}{c!{\vrule width 1.0pt}}{17}
						&\multicolumn{2}{c!{\vrule width 1.0pt}}{17}
						&\multicolumn{2}{c}{207}\\[-5pt]
$\deg\PPP_L$				&\multicolumn{2}{c!{\vrule width 1.0pt}}{4}
						&\multicolumn{2}{c!{\vrule width 1.0pt}}{4}
						&\multicolumn{2}{c}{7}\\[-5pt]
$M_{\PPP_L}$	&\multicolumn{2}{c!{\vrule width 1.0pt}}{7}
						&\multicolumn{2}{c!{\vrule width 1.0pt}}{7}
						&\multicolumn{2}{c}{17}\\
\hline
$\PPP_\ell$	
&$\ell=0$	  	&$(0\ 0)$				&$\ell=0$		&$(0\ 0)$			&$\ell=0$	&$(0\ 0)$	\\[-1pt]
&$\ell=1$	  	&$(1\ 0)$				&$\ell=1$		&$(1\ 0)$			&$\ell=1$	&$(1\ 0)$	\\[-1pt]
&$\ell=7$		&$(0\ 1)$				&$\ell=7$		&$(0\ 1)$			&$\ell=2$	&$(0\ 1)$\\[-5pt]
&			&					&			&				&		&$(2\ 0)$\\[-1pt]
&$\ell=10$	&$(2\ 0)$				&$\ell=9$		&$(2\ 0)$			&$\ell=3$	&$(0\ 0\ 1)$\\[-5pt]
&			&					&			&				&		&$(1\ 1\ 0)$\\[-5pt]
&			&					&			&				&		&$(3\ 0\ 0)$\\[-1pt]
&$\ell=13$	&$(0\ 0\ 1)$			&$\ell=13$	&$(0\ 0\ 1)$		&$\ell=4$	&$(0\ 0\ 0\ 1)$\\[-5pt]
&			&					&			&				&		&$(1\ 0\ 1\ 0)$\\[-1pt]
&$\ell=16$	&$(1\ 1\ 0)$			&$\ell=15$	&$(1\ 1\ 0)$		&$\ell=5$	&$(0\ 0\ 0\ 0\ 1)$\\[-5pt]
&			&$(3\ 0\ 0)$			&			&$(3\ 0\ 0)$		&		&$(2\ 1\ 0\ 0\ 0)$\\[-1pt]
&$\ell=19$	&$(0\ 0\ 0\ 1)$			&$\ell=18$	&$(0\ 0\ 0\ 1)$		&$\ell=6$	&$(0\ 0\ 0\ 0\ 0\ 1)$\\[-5pt]
&			&$(1\ 0\ 1\ 0)$			&			&$(1\ 0\ 1\ 0)$		&		&$(1\ 0\ 0\ 1\ 0\ 0)$\\[-5pt]
&			&					&			&				&		&$(2\ 0\ 1\ 0\ 0\ 0)$\\[-5pt]
&			&					&			&				&		&$(0\ 2\ 0\ 0\ 0\ 0)$\\[-5pt]
&			&					&			&				&		&$(4\ 0\ 0\ 0\ 0\ 0)$\\[-1pt]
&$\ell=21$	&$(0\ 0\ 0\ 0\ 1)$		&$\ell=21$	&$(0\ 0\ 0\ 0\ 1)$	&$\ell=7$		&5 indices\\[-5pt]
&			&$(2\ 1\ 0\ 0\ 0)$		&			&$(2\ 1\ 0\ 0\ 0)$	&$\ell=8$		&5 indices\\[-1pt]
&$\ell=24$	&$(0\ 0\ 0\ 0\ 0\ 1)$	&$\ell=24$	&$(0\ 0\ 0\ 0\ 0\ 1)$		&$\ell=9$	&7 indices\\[-5pt]
&			&$(2\ 0\ 1\ 0\ 0\ 0)$	&			&$(2\ 0\ 1\ 0\ 0\ 0)$		&$\ell=10$	&8 indices\\[-5pt]
&			&$(1\ 0\ 0\ 1\ 0\ 0)$	&			&$(1\ 0\ 0\ 1\ 0\ 0)$		&$\ell=11$	&9 indices\\[-1pt]
&$\ell=27$	&$(0\ 2\ 0\ 0\ 0\ 0\ 0)$	&$\ell=26$	&$(0\ 2\ 0\ 0\ 0\ 0\ 0)$	&$\ell=12$	&19 indices\\[-5pt]
&			&$(0\ 0\ 0\ 0\ 0\ 0\ 1)$	&			&$(0\ 0\ 0\ 0\ 0\ 0\ 1)$	&$\ell=13$	&16 indices\\[-5pt]
&			&$(4\ 0\ 0\ 0\ 0\ 0\ 0)$	&			&$(4\ 0\ 0\ 0\ 0\ 0\ 0)$	&$\ell=14$	&16 indices\\[-5pt]
&			&					&			&					&$\ell=15$	&33 indices\\[-5pt]
&			&					&			&					&$\ell=16$	&38 indices\\[-5pt]
&			&					&			&					&$\ell=17$	&35 indices\\[-1pt]
\noalign{\hrule height 1.0pt}
\end{tabular}
\vspace{10pt}
\caption{
Experiments in section~\ref{sec:eigel-Lshaped}:
Final outputs and evolution of the index set for adaptive multilevel algorithms.
}
\label{tab:pb5}
\end{center}                                                                   
\end{table}
%%%%%%%%%%%%%%%%%%%%%
\end{scriptsize}

%%%%%%%%%%%%%%%%%%%%
\begin{figure}[b]
\captionsetup[subfigure]{labelformat=empty}
\centering
\begin{subfigure}{0.30\textwidth}
\centering
\includegraphics*[width=\textwidth,trim = 105 25 100 25]{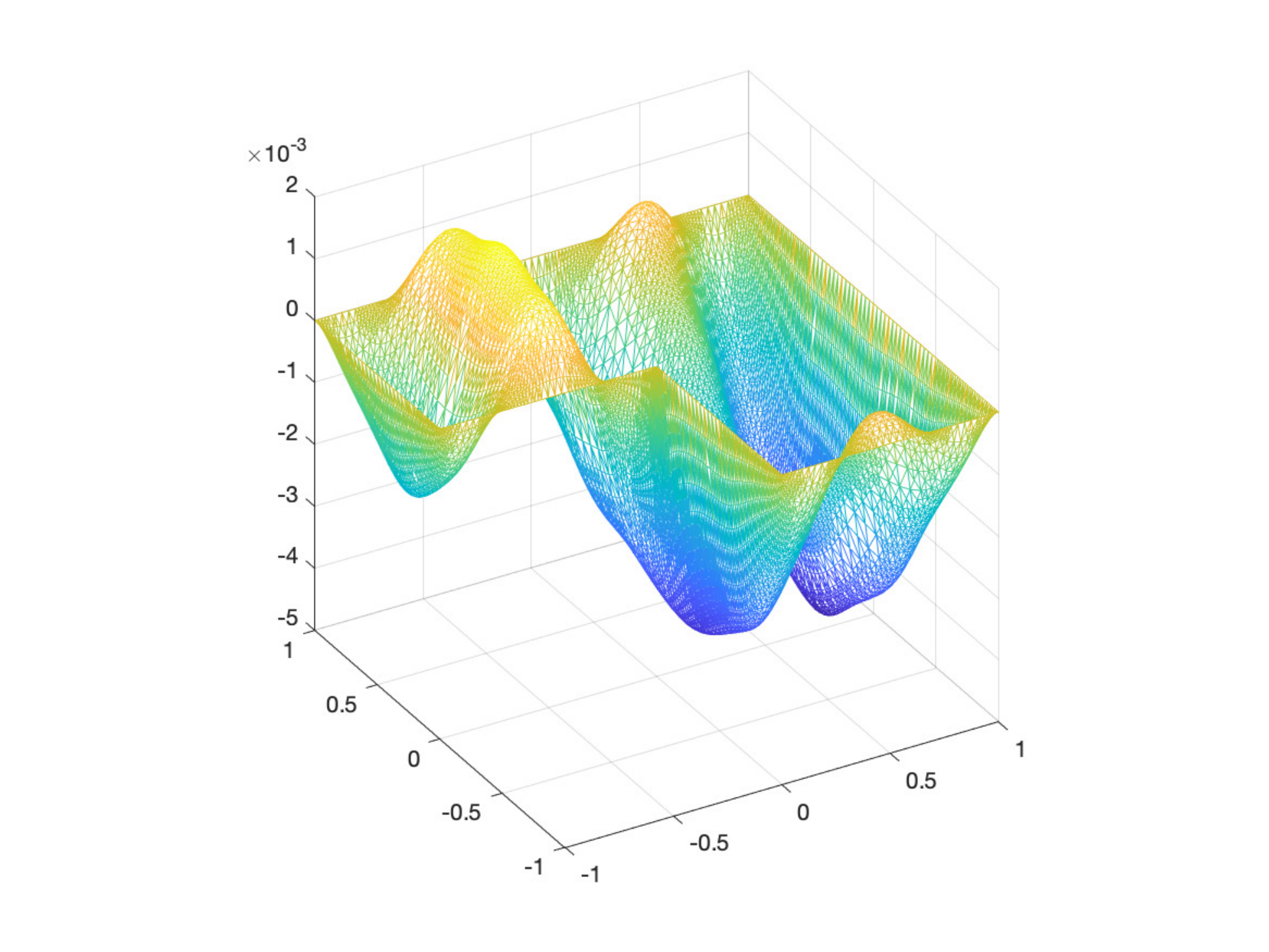}\\
\vspace{1mm}
\includegraphics*[width=\textwidth,trim = 118 45 98 30]{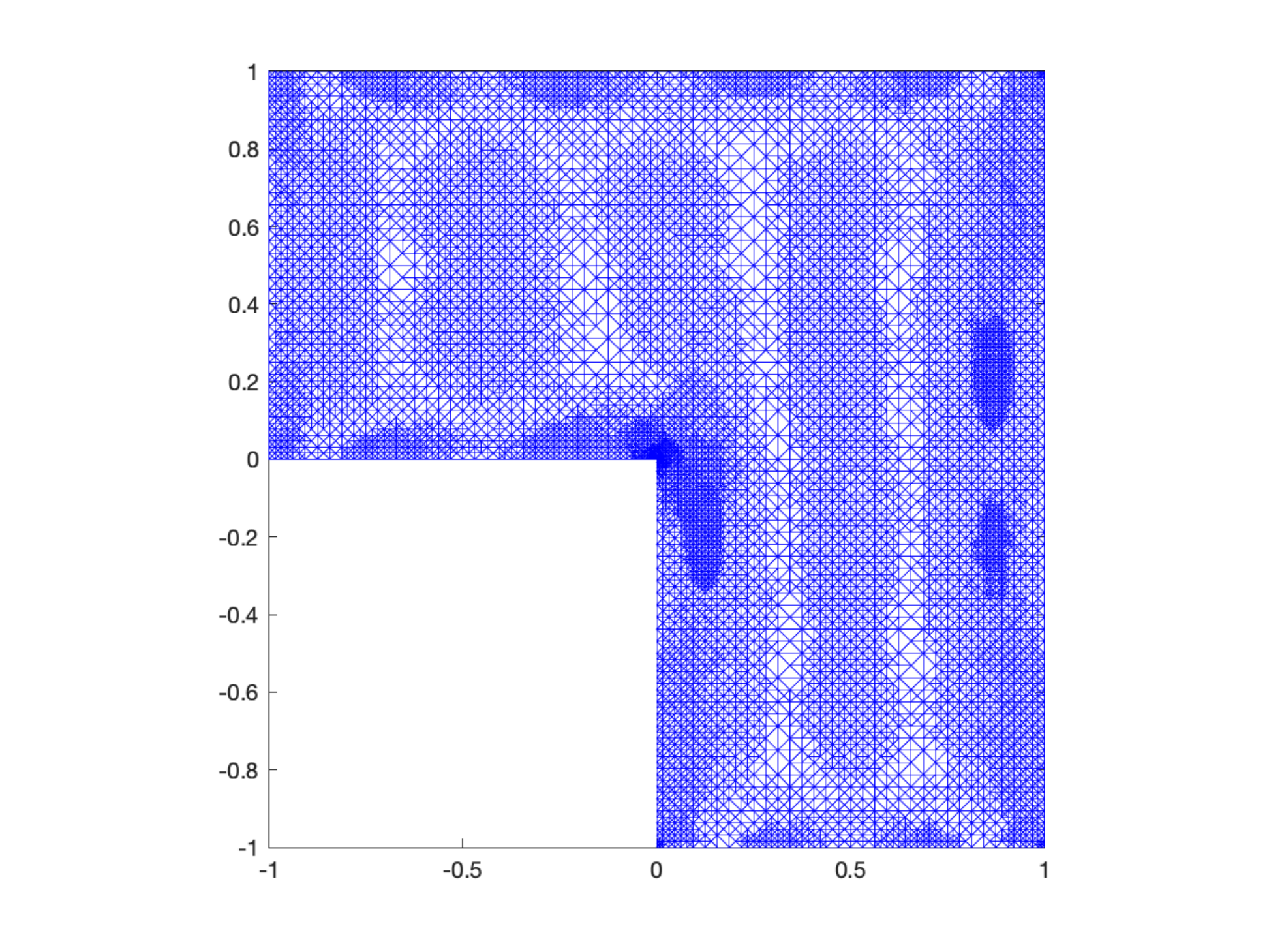}
\caption{$\nu = (0 \ 1)$\\ $\#\TT_{L\nu} = \num{24664}$}
\end{subfigure}
%%%
\hfill
%%%
\begin{subfigure}{0.30\textwidth}
\centering
\includegraphics*[width=\textwidth,trim = 105 25 100 25]{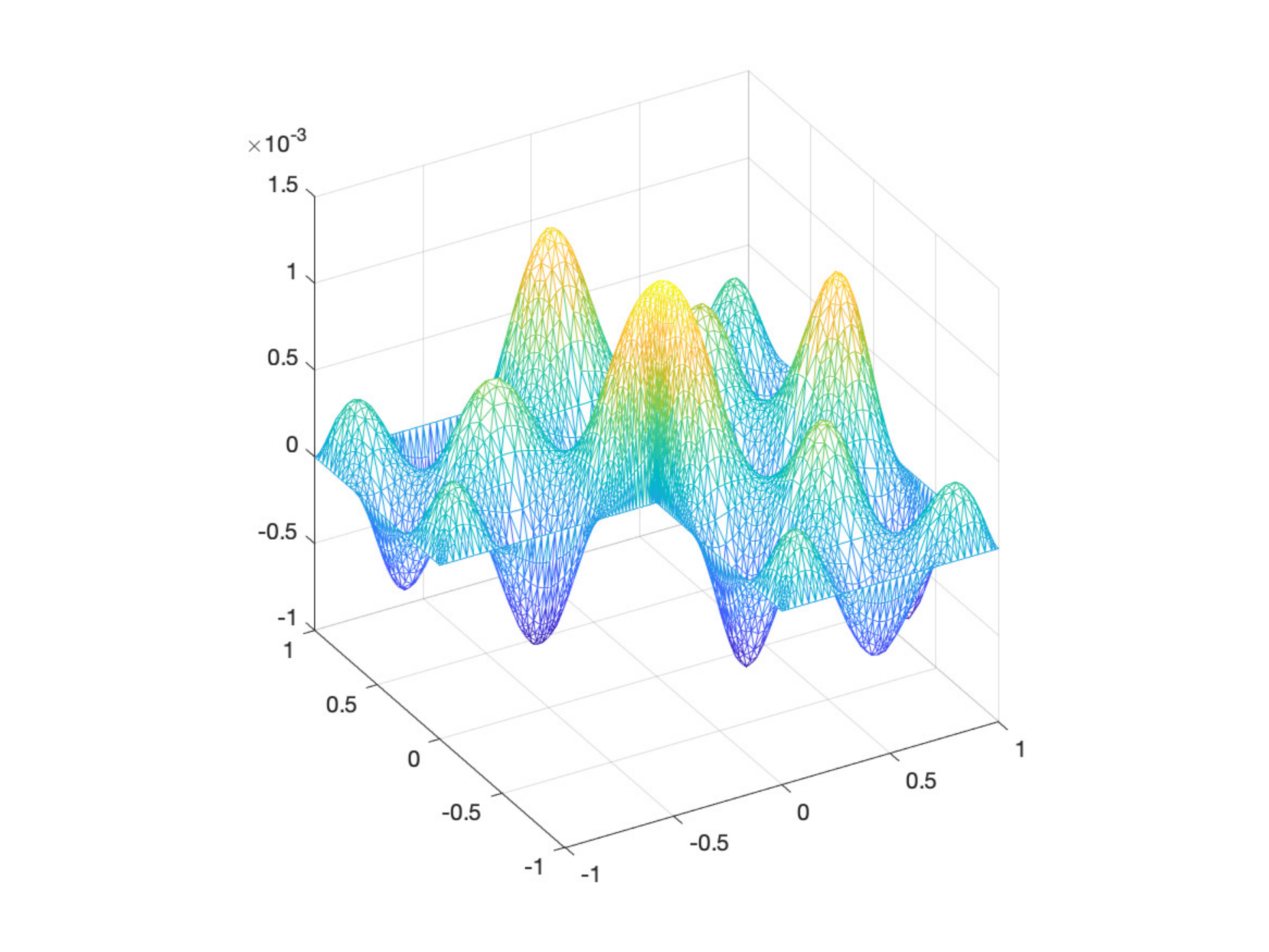}\\
\vspace{1mm}
\includegraphics*[width=\textwidth,trim = 118 45 98 30]{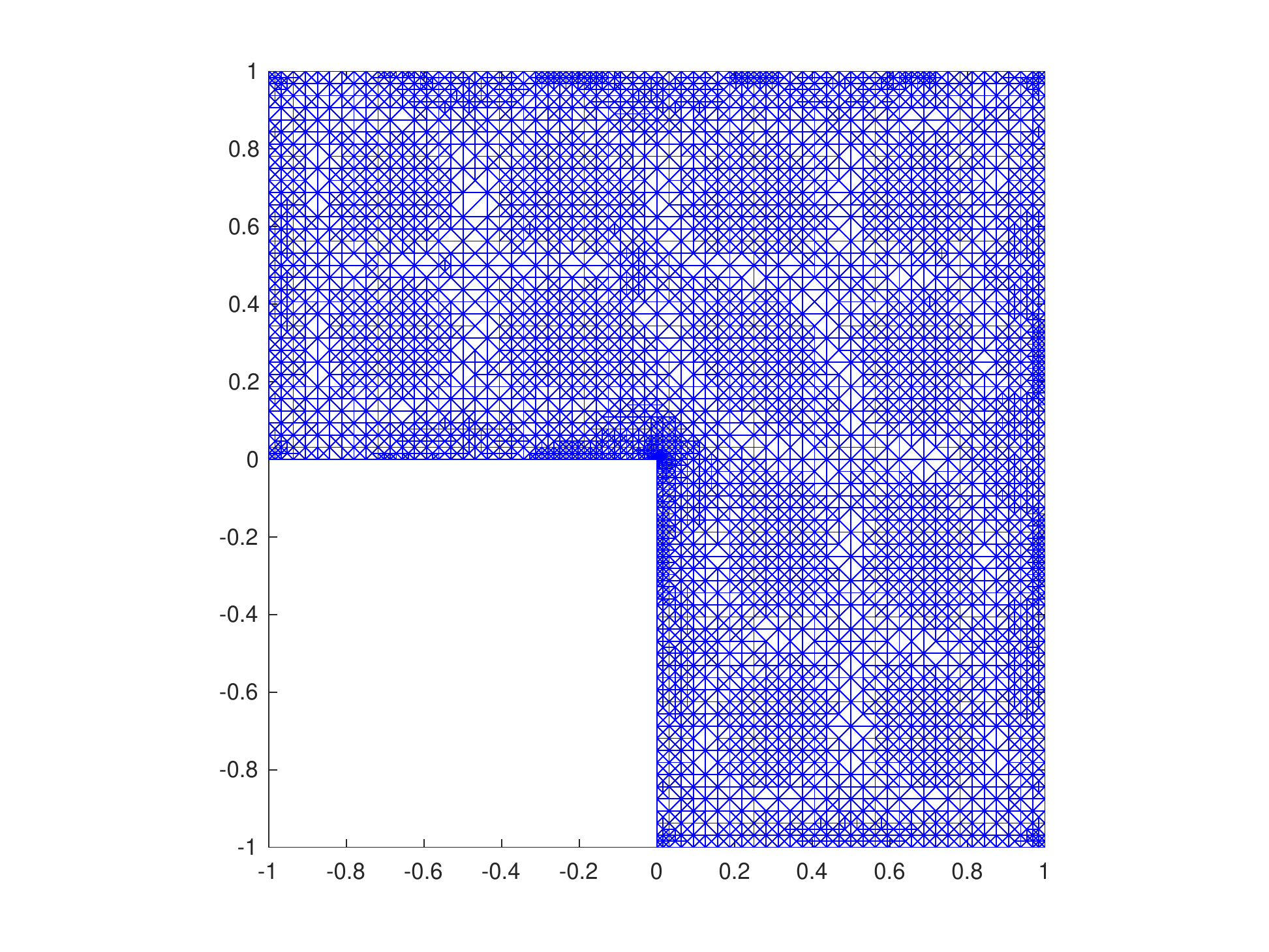}
\caption{$\nu = (1 \ 1)$\\ $\#\TT_{L\nu} = \num{11550}$}
\end{subfigure}
%%%
\hfill
%%%
\begin{subfigure}{0.30\textwidth}
\centering
\includegraphics*[width=\textwidth,trim = 105 25 100 25]{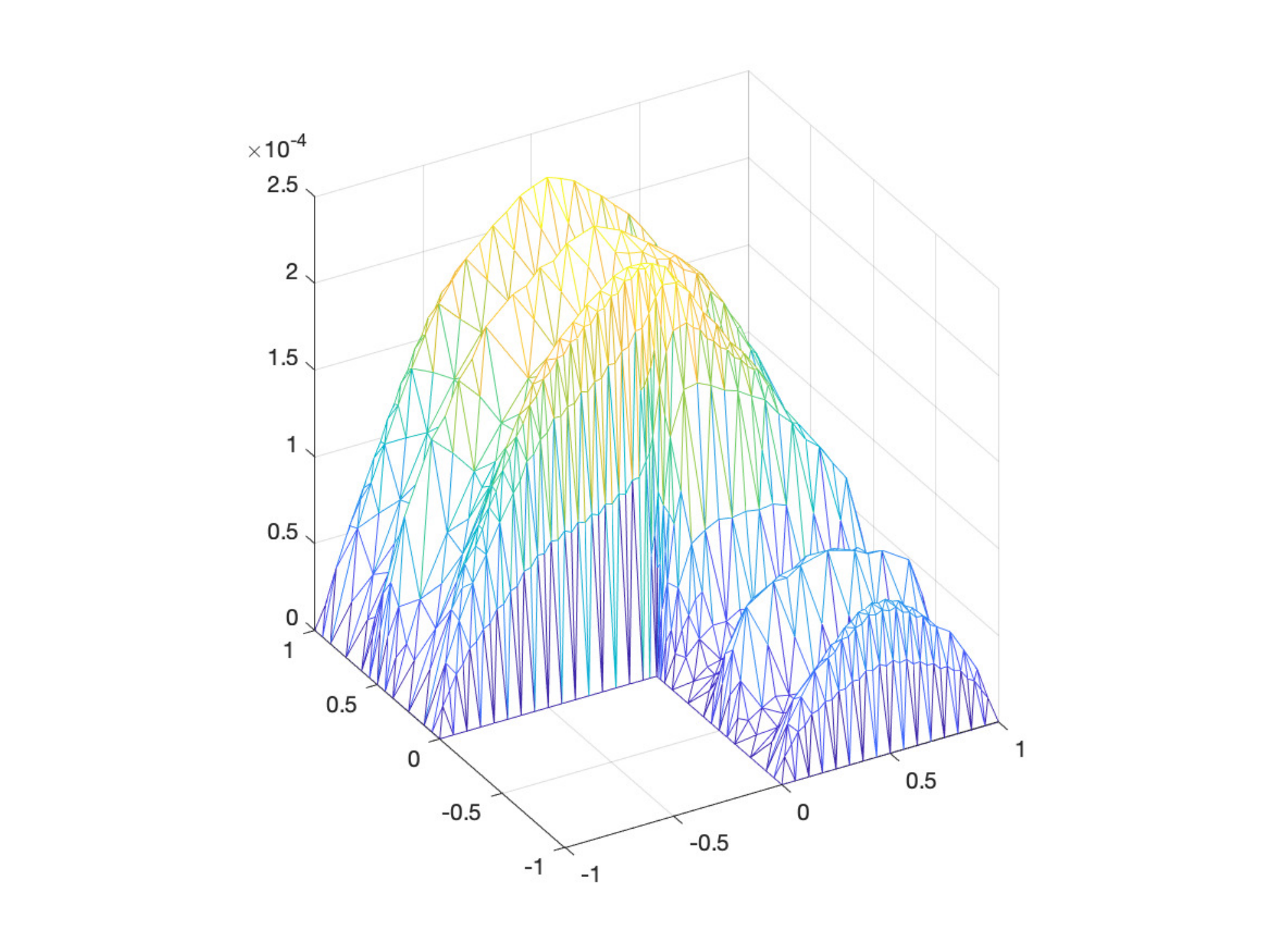}\\
\vspace{1mm}
\includegraphics*[width=\textwidth,trim = 118 45 98 30]{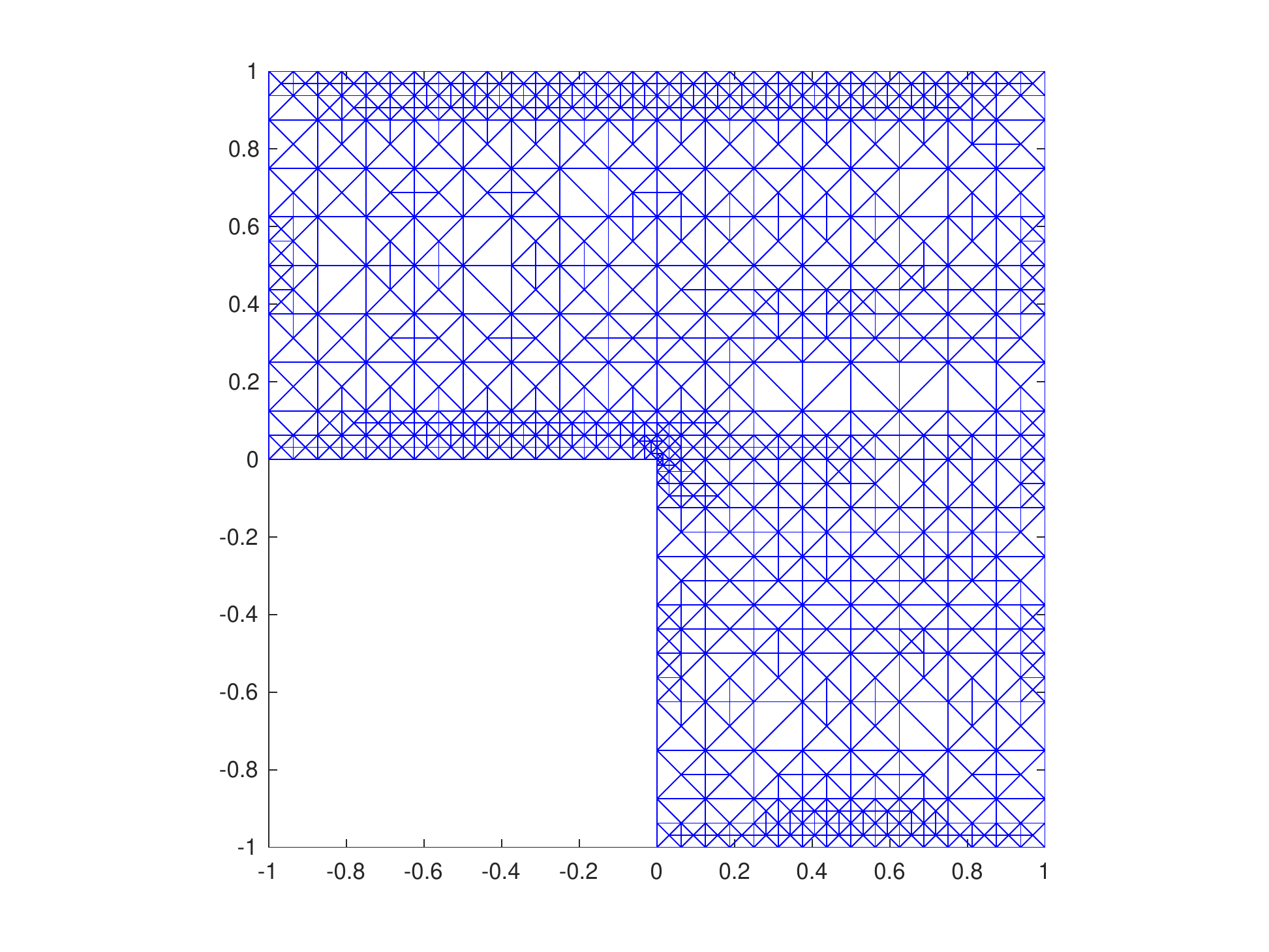}
\caption{$\nu = (4 \ 0)$\\ $\#\TT_{L\nu} = \num{1809}$}
\end{subfigure}
%%%%
\caption{
Experiments in section~\ref{sec:eigel-Lshaped}:
Coefficients $u_{L\nu} \in \X_{L\nu} = \SS^1(\TT_{L\nu})$
of the final SGFEM approximation generated by \texttt{ML-C} (top plots)
and the associated adaptively refined meshes $\TT_{L\nu}$ (bottom plots) for three indices $\nu \in \PPP_L$.
}
\label{fig:pb5:pictures}
\end{figure}
%%%%%%%%%%%%%%%%%%%%

Let us look in more detail at the performance of multilevel algorithms in this experiment.
In Figure~\ref{pb5:multilevel_decay}, for the algorithms \texttt{ML-A}, \texttt{ML-B} and \texttt{ML-C},
we plot the total error estimates $\est_\ell$ along with their spatial and parametric components given by
\begin{equation*}
\est_{\X_\ell}
:= \bigg( \sum_{\nu \in \PPP_\ell}\sum_{z \in \NN_{\ell\nu}^+} \est_{\ell}(\nu,z)^2\bigg)^{1/2}
\quad
\text{and}
\quad
\est_{\PPP_\ell}
:=
\bigg(\sum_{\nu \in \QQQ_\ell} \est_\ell(\nu)^2  \bigg)^{1/2},
\end{equation*}
respectively,
and the reference energy error $\bnorm{\uu_\mathrm{ref} - \uu_\ell}$,
where $\uu_\mathrm{ref}$ denotes a reference solution computed by running the algorithm \texttt{ML-C} to a lower tolerance.
Note that
$\est_\ell^2 = \est_{\X_\ell}^2 + \est_{\PPP_\ell}^2$; see~\eqref{eq:def:tau}.
For the algorithms with separate spatial and parametric enrichments (i.e., \texttt{ML-A} and \texttt{ML-B}),
the plots in Figure~\ref{pb5:multilevel_decay} look very similar.
For both these algorithms, 
we observe that the parametric error estimates $\est_{\PPP_\ell}$ remain essentially constant during mesh refinement iterations,
whereas the spatial error estimates $\est_{\X_\ell}$ exhibit a noticeable increase at the iteration following each parametric enrichment.
The latter observation is a consequence of assigning the coarse mesh $\TT_0$ to every new index introduced by the parametric enrichment.
As a result, the decay rates of the total error estimates $\est_\ell$ for \texttt{ML-A}, \texttt{ML-B} are still suboptimal.

By looking at the plot for the algorithm with combined enrichment (i.e., \texttt{ML-C})
we see a completely different behavior.
The balanced enrichment of spatial and parametric components of Galerkin approximations that was inherent
to \texttt{ML-A} and \texttt{ML-B} is completely lost in~\texttt{ML-C}.
Instead,~\texttt{ML-C} clearly privileges parametric enrichment by activating significantly more indices than \texttt{ML-A} and \texttt{ML-B}
(see also Table~\ref{tab:pb5}).
This is a consequence of the combined marking strategy~\eqref{eq:doerfler:combined} and
the fact that a small number of parametric error indicators are larger in magnitude than a significant proportion of spatial error indicators.
This results in the parametric error estimates $\est_{\PPP_\ell}$ decaying much faster than their spatial counterparts $\est_{\X_\ell}$.
However, the total error estimate $\est_\ell$ decays with fully optimal rate $\mathcal{O}(N_\ell^{-0.5})$.

In Table~\ref{tab:pb5}, for each multilevel adaptive algorithm,
we show the total number of iterations $L$,
the final value of the total error estimate $\est_L$,
the number of degrees of freedom in the final SGFEM approximation, as well as
the cardinality of the final index set $\PPP_L$,
the (total) degree $\deg\PPP_L := \max_{\nu \in \PPP_L} \sum_{j \geq 1} \nu_j$ of polynomials in the associated polynomial space,
and the number of active parameters $M_{\PPP_L}$ in $\PPP_L$.
We also show the evolution of the index set throughout each computation.
By looking at these results, we observe that in order to reach the prescribed tolerance,
the algorithm with combined enrichment requires significantly
less iterations and generates the final Galerkin approximation with significantly less degrees of freedom
than either of the algorithms with separate enrichments.
In addition to this, these two types of multilevel algorithms generate Galerkin approximations with remarkably different distributions
of spatial and parametric degrees of freedom.
Specifically, while \texttt{ML-A} and \texttt{ML-B} produce relatively small index sets and fine meshes for most of the indices,
\texttt{ML-C} generates a much larger index set but very coarse meshes for the majority of indices.
The latter feature resembles that of multilevel sampling methods, where very few deterministic PDE solves
are performed on fine spatial meshes while the majority of solves use coarse meshes.

In Figure~\ref{fig:pb5:pictures}, for the final SGFEM approximation
\begin{equation*}
\uu_L = \sum_{\nu \in \PPP_L} u_{L\nu} P_\nu \in \V_L
\end{equation*}
generated by \texttt{ML-C},
we plot the coefficients $u_{L\nu} \in \X_{L\nu}$ and the associated meshes $\TT_{L\nu}$
for $\nu \in \big\{ (0\ 1) , (1\ 1) , (4\ 0) \big\} \subset \PPP_L$.
Meshes with {similar patterns} were produced by all other multilevel algorithms.
We observe that adaptively refined meshes
identify the geometric singularity at the reentrant corner (affecting all coefficients in the same way)
and the regions with steep gradient (which are different for each coefficient).
All the identified areas exhibit much stronger mesh refinement than elsewhere in the domain.
More importantly, finer meshes are produced for those coefficients that are more `influential' in the Galerkin solution
(i.e., the coefficients whose indices are activated earlier); cf. the values of $\#\TT_{L\nu}$ in Figure~\ref{fig:pb5:pictures}.
This illustrates how the flexi\-bi\-li\-ty in allocating degrees of freedom ensures greater efficiency of multilevel methods,
compared to the single-level SGFEM.

%%%%%%%%%%%%%%%%%%%%
\subsection{Cookie problem} \label{sec:cookie}
%%%%%%%%%%%%%%%%%%%%

Our second example of parametric problem~\eqref{eq:strongform}--\eqref{eq1:a} is the so-called \emph{cookie problem};
cf.~\cite{BallaniG_15_HTA, ENSW19}.
We consider the square domain $D = (0,1)^2$
that contains nine circular inclusions $D_m \subset D$ ($m = 1,\ldots,9$).
For all $i,j \in \{1,2,3\}$, the subdomain $D_{i+3(j-1)}$ is the disk with center at the point $((2i-1)/6,(2j-1)/6)$ and radius $r = 1/8$.
We set $\ff \equiv 1$ in~\eqref{eq:strongform} and select the expansion coefficients in~\eqref{eq1:a}
as follows:
\begin{equation} \label{eq:coeff_cookie}
a_m(x)
=
\begin{cases}
1 & \text{for } m=0,\\
0.5\,\raisebox{1pt}{$\chi$}_{D_m}(x) & \text{for } m=1,3,7,9,\\
0.7\,\raisebox{1pt}{$\chi$}_{D_m}(x) & \text{for } m=2,4,6,8,\\
0.9\,\raisebox{1pt}{$\chi$}_{D_m}(x) & \text{for } m=5,\\
0 & \text{for } m>9
\end{cases}
\quad \text{for all } x \in D,
\end{equation}
where $\raisebox{1pt}{$\chi$}_{D_m}$ denotes the characteristic function of the subdomain~$D_m$.
Thus, the diffusion coefficient $\aa(x,\y)$ in this example depends on finitely many parameters $y_1,\ldots,y_9 \in [-1,1]$;
furthermore, assumptions~\eqref{eq2:a}--\eqref{eq3:a} are satisfied
(with $a_0^{\rm min} = a_0^{\rm max} = 1$ and $\tau = 0.9$).

We emphasize that, in contrast to the benchmark problem in section~\ref{sec:eigel},
where the amplitude of the coefficient $a_m$
in the expansion~\eqref{eq1:a} decays as $m$ increases,
which induces a hierarchy of the parameters
(with $y_m$ being more `important' than $y_\ell$ if $m<\ell$),
in this example 
the `importance' of the parameters cannot be directly inferred from
the ordering of the terms in expansion~\eqref{eq1:a}.
Hence,
one should not \textsl{a~priori} prescribe any specific order in which the parameters are activated.
That is why, when running adaptive algorithms for the cookie problem, we set $\overline{M}=9$ in~\eqref{def:Q}
(note that in this example $\III := \N_0^9$).
This way, when it comes to the first parametric enrichment, all parameters are available for activation,
and the order in which they are activated is determined by the associated parametric indicators.

In computations with all five adaptive algorithms for this problem,
we set the stopping tolerance {\tt tol} = \num{8.0e-4} and
use the same initial mesh $\TT_0$ as in~\S\ref{sec:eigel-square}.

%%%%%%%%%%%%%%%%%%%%
% convergence rates
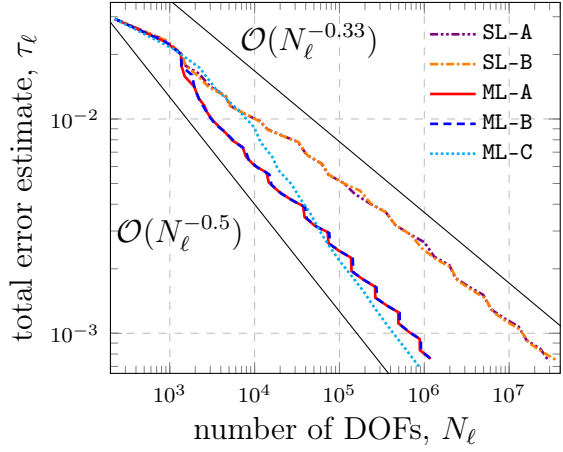
\begin{figure}[ht]
\begin{tikzpicture}
\pgfplotstableread{data/pb9-sl-a.dat}{\one}
\pgfplotstableread{data/pb9-sl-b.dat}{\two}
\pgfplotstableread{data/pb9-ml-a.dat}{\three}
\pgfplotstableread{data/pb9-ml-b.dat}{\four}
\pgfplotstableread{data/pb9-ml-c.dat}{\five}
\begin{loglogaxis}
[
width = 7.5cm, height=6.5cm,						% figure's size
xlabel={number of DOFs, $N_\ell$}, 					% x label
ylabel={total error estimate, $\est_\ell$},				% y label
ymajorgrids=true, xmajorgrids=true, grid style=dashed,	% grid 
xmin = (2.0)*10^(2),
xmax = (4.0)*10^(7),
ymin = (6.5)*10^(-4),
ymax = (3.5)*10^(-2),
legend style={legend pos=north east, legend cell align=left, fill=none, draw=none}
]
\addplot[violet,line width=1.0pt, densely dash dot dot]	table[x=dofs, y=error]{\one};
\addplot[orange,line width=1.0pt, densely dash dot]		table[x=dofs, y=error]{\two};
\addplot[red,line width=1.0pt]						table[x=dofs, y=error]{\three};
\addplot[blue,line width=1.0pt, densely dashed]			table[x=dofs, y=error]{\four};
\addplot[cyan,line width=1.0pt, densely dotted]			table[x=dofs, y=error]{\five};
\addplot[black,solid,domain=10^(2.2):10^(7.8)] { 0.35*x^(-0.33) };
\node at (axis cs:5e3,1.7e-2) [anchor=south west] {$\mathcal{O}(N_\ell^{-0.33})$};
\addplot[black,solid,domain=10^(2.2):10^(6.0)] { 0.4*x^(-0.5) };
\node at (axis cs:1e4,4.0e-3) [anchor=north east] {$\mathcal{O}(N_\ell^{-0.5})$};
\legend{
\texttt{SL-A},
\texttt{SL-B},
\texttt{ML-A},
\texttt{ML-B},
\texttt{ML-C},
}
\end{loglogaxis}
\end{tikzpicture}
%%%%
\caption{
Experiments in section~\ref{sec:cookie}:
Total error estimates $\est_\ell$ versus the number of degrees of freedom $N_\ell$
for all adaptive algorithms.
}
\label{fig:pb9-rates}
\end{figure}
%%%%%%%%%%%%%%%%%%%%

%%%%%%%%%%%%%%%%%%%%
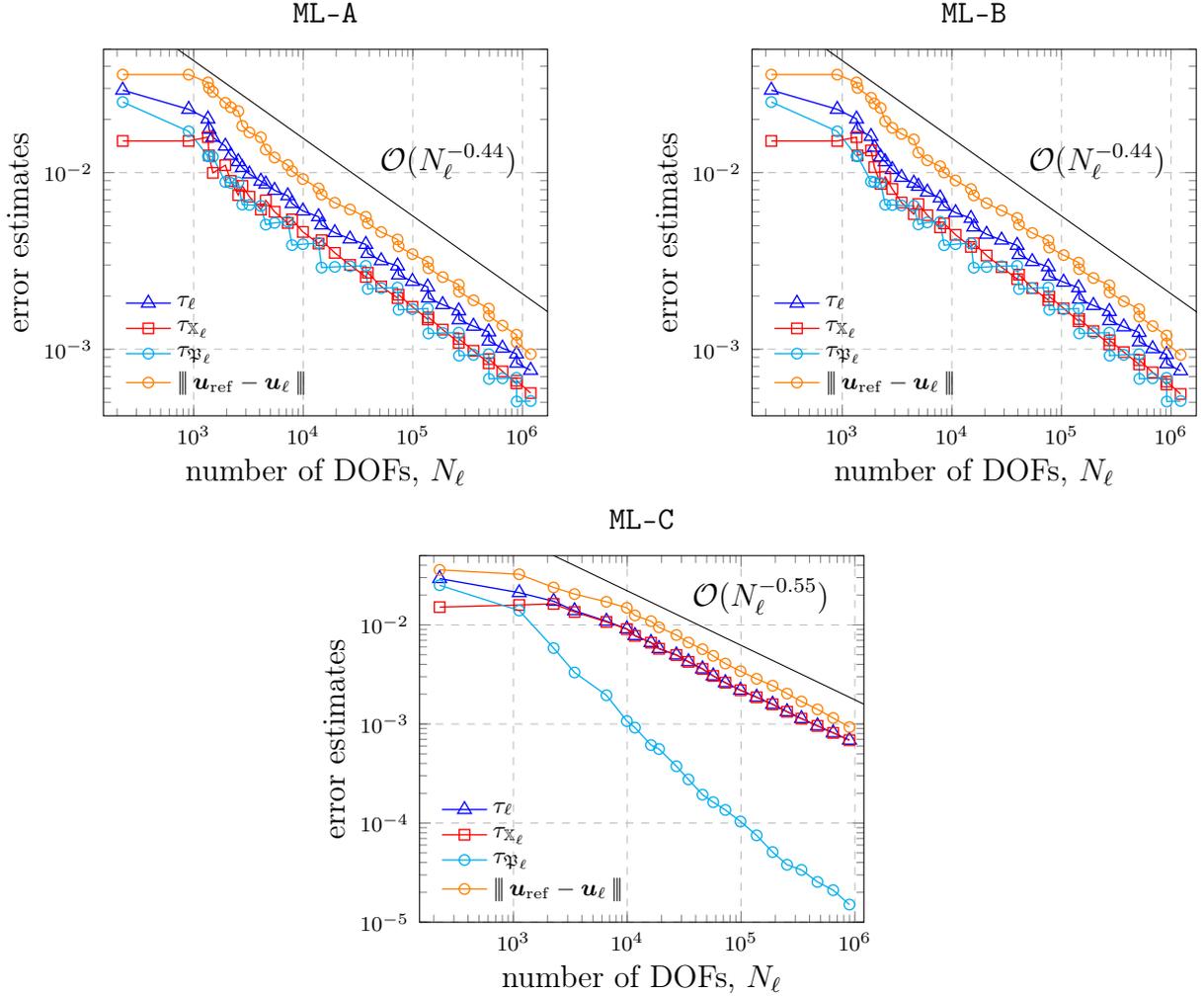
\begin{figure}[H]
%%%%
\begin{tikzpicture}
\pgfplotstableread{data/pb9-ml-a.dat}{\one}
\begin{loglogaxis}
[
width = 7.5cm, height=6.5cm,								% figure's size
title={\texttt{ML-A}},% title
xlabel={number of DOFs, $N_\ell$},						% x label
ylabel={error estimates},									% y label
ymajorgrids=true, xmajorgrids=true, grid style=dashed,		% grid 
xmin = (1.5)*10^(2),
xmax = (1.7)*10^(6),
ymin = (4.2)*10^(-4),
ymax = (5.0)*10^(-2),
legend style={legend pos=south west, legend cell align=left, fill=none, draw=none}
]
\addplot[blue,mark=triangle,mark size=3.0pt,line width=0.5]	table[x=dofs, y=error]{\one};
\addplot[red,mark=square,mark size=2.0pt,line width=0.5]		table[x=dofs, y=yp_one]{\one};
\addplot[cyan,mark=o,mark size=2.0pt,line width=0.5]		table[x=dofs, y=xq_one]{\one};
\addplot[orange,mark=o,mark size=2.0pt,line width=0.5]		table[x=dofs, y=truerr]{\one};
\addplot[black,solid,domain=10^(1.5):10^(7.8)] { 0.9*x^(-0.44) };
\node at (axis cs:4e4,8e-3) [anchor=south west] {$\mathcal{O}(N_\ell^{-0.44})$};
\legend{
$\est_\ell$,
{$\est_{\X_\ell}$},
{$\est_{\PPP_\ell}$},
{$\bnorm{\uu_\mathrm{ref}-\uu_\ell}$},
}
\end{loglogaxis}
\end{tikzpicture}
%%%%
\hfill
%%%%
\begin{tikzpicture}
\pgfplotstableread{data/pb9-ml-b.dat}{\one}
\begin{loglogaxis}
[
width = 7.5cm, height=6.5cm,								% figure's size
title={\texttt{ML-B}},% title
xlabel={number of DOFs, $N_\ell$},						% x label
ylabel={error estimates},									% y label
ymajorgrids=true, xmajorgrids=true, grid style=dashed,		% grid 
xmin = (1.5)*10^(2),
xmax = (1.7)*10^(6),
ymin = (4.2)*10^(-4),
ymax = (5.0)*10^(-2),
legend style={legend pos=south west, legend cell align=left, fill=none, draw=none}
]
\addplot[blue,mark=triangle,mark size=3.0pt,line width=0.5]	table[x=dofs, y=error]{\one};
\addplot[red,mark=square,mark size=2.0pt,line width=0.5]		table[x=dofs, y=yp_one]{\one};
\addplot[cyan,mark=o,mark size=2.0pt,line width=0.5]		table[x=dofs, y=xq_one]{\one};
\addplot[orange,mark=o,mark size=2.0pt,line width=0.5]		table[x=dofs, y=truerr]{\one};
\addplot[black,solid,domain=10^(1.5):10^(7.8)] { 0.9*x^(-0.44) };
\node at (axis cs:4e4,8e-3) [anchor=south west] {$\mathcal{O}(N_\ell^{-0.44})$};
\legend{
$\est_\ell$,
{$\est_{\X_\ell}$},
{$\est_{\PPP_\ell}$},
{$\bnorm{\uu_\mathrm{ref}-\uu_\ell}$},
}
\end{loglogaxis}
\end{tikzpicture}
%%%%
\hfill
%%%%
\begin{tikzpicture}
\pgfplotstableread{data/pb9-ml-c.dat}{\one}
\begin{loglogaxis}
[
width = 7.5cm, height=6.5cm,								% figure's size
title={\texttt{ML-C}},% title
xlabel={number of DOFs, $N_\ell$},						% x label
ylabel={error estimates},									% y label
ymajorgrids=true, xmajorgrids=true, grid style=dashed,		% grid 
xmin = (1.5)*10^(2),
xmax = (1.2)*10^(6),
ymin = (1.0)*10^(-5),
ymax = (5.0)*10^(-2),
legend style={legend pos=south west, legend cell align=left, fill=none, draw=none}
]
\addplot[blue,mark=triangle,mark size=3.0pt,line width=0.5]	table[x=dofs, y=error]{\one};
\addplot[red,mark=square,mark size=2.0pt,line width=0.5]		table[x=dofs, y=yp_one]{\one};
\addplot[cyan,mark=o,mark size=2.0pt,line width=0.5]		table[x=dofs, y=xq_one]{\one};
\addplot[orange,mark=o,mark size=2.0pt,line width=0.5]		table[x=dofs, y=truerr]{\one};

\addplot[black,solid,domain=10^(1.5):10^7] {3.5*x^(-0.55) };
\node at (axis cs:3e4,1.0e-2) [anchor=south west] {$\mathcal{O}(N_\ell^{-0.55})$};
\legend{
$\est_\ell$,
{$\est_{\X_\ell}$},
{$\est_{\PPP_\ell}$},
{$\bnorm{\uu_\mathrm{ref}-\uu_\ell}$},
}
\end{loglogaxis}
\end{tikzpicture}
%%%%
\caption{
Experiments in section~\ref{sec:cookie}:
Decay of the error estimates (total, spatial, parametric)
and the reference errors computed at each iteration of the adaptive multilevel algorithm.
}
\label{fig:pb9:multilevel_decay}
\end{figure}
%%%%%%%%%%%%%%%%%%%%

In Figure~\ref{fig:pb9-rates}, for all adaptive algorithms,
we plot the error estimates $\est_\ell$ against the number of degrees of freedom $N_\ell$.
The results are in agreement with those presented in section~\ref{sec:eigel}:
(i) For single-level approximations, the error estimates decay with suboptimal rate $\mathcal{O}(N_\ell^{-0.33})$;
(ii) The decay rates for the multilevel approximations generated by \texttt{ML-A} and \texttt{ML-B}
are faster than $\mathcal{O}(N_\ell^{-0.33})$ but not optimal;
(iii) For the multilevel approximations generated by \texttt{ML-C},
the error estimates decay with fully optimal rate~$\mathcal{O}(N_\ell^{-0.5})$.

In Figure~\ref{fig:pb9:multilevel_decay}, for algorithms \texttt{ML-A}, \texttt{ML-B} and \texttt{ML-C},
we plot the total error estimates $\est_\ell$ along with their spatial and parametric components
$\est_{\X_\ell}$ and $\est_{\PPP_\ell}$,
as well as the reference energy error $\bnorm{\uu_\mathrm{ref} - \uu_\ell}$,
where $\uu_\mathrm{ref}$ denotes a reference solution computed by running the algorithm \texttt{ML-C} to a lower tolerance
({\tt tol} = \num{2.0e-4}).

\begin{scriptsize}
%%%%%%%%%%%%%%%%%%%%
\begin{table}[ht]
\setlength\tabcolsep{4pt} 
\begin{center} 
\renewcommand{\arraystretch}{1.45}
\begin{tabular}{r !{\vrule width 1.0pt} c  l !{\vrule width 1.0pt} c l !{\vrule width 1.0pt} c l} 
\noalign{\hrule height 1.0pt}
&\multicolumn{2}{c!{\vrule width 1.0pt}}{\texttt{ML-A}} 
&\multicolumn{2}{c!{\vrule width 1.0pt}}{\texttt{ML-B}}
&\multicolumn{2}{c}{\texttt{ML-C}} \\	
\noalign{\hrule height 1.0pt}
$L$					&\multicolumn{2}{c!{\vrule width 1.0pt}}{37}		
					&\multicolumn{2}{c!{\vrule width 1.0pt}}{37}
					&\multicolumn{2}{c}{21}\\[-5pt]
$\est_L$				&\multicolumn{2}{c!{\vrule width 1.0pt}}{$7.60064 \cdot 10^{-4}$}	
					&\multicolumn{2}{c!{\vrule width 1.0pt}}{$7.55016 \cdot 10^{-4}$}
					&\multicolumn{2}{c}{$6.86986 \cdot 10^{-4}$}\\[-5pt]
$N_L$				&\multicolumn{2}{c!{\vrule width 1.0pt}}{\num{1188953}}
					&\multicolumn{2}{c!{\vrule width 1.0pt}}{\num{1223401}}
					&\multicolumn{2}{c}{\num{897023}}\\[-5pt]
$\#\PPP_L$ 			&\multicolumn{2}{c!{\vrule width 1.0pt}}{73}
					&\multicolumn{2}{c!{\vrule width 1.0pt}}{73}
					&\multicolumn{2}{c}{629}\\[-5pt]
$\deg\PPP_L$	 		&\multicolumn{2}{c!{\vrule width 1.0pt}}{8}
					&\multicolumn{2}{c!{\vrule width 1.0pt}}{8}
					&\multicolumn{2}{c}{17}\\[-5pt]
$M_{\PPP_L}$			&\multicolumn{2}{c!{\vrule width 1.0pt}}{9}
					&\multicolumn{2}{c!{\vrule width 1.0pt}}{9}
					&\multicolumn{2}{c}{9}\\
\noalign{\hrule height 1.0pt}
\end{tabular}
\vspace{10pt}
\caption{
Experiments in section~\ref{sec:cookie}:
Final outputs for adaptive multilevel algorithms.
}
\label{tab:pb9}
\end{center}                                                                   
\end{table}
%%%%%%%%%%%%%%%%%%%%
\end{scriptsize}

%%%%%%%%%%%%%%%%%%%%
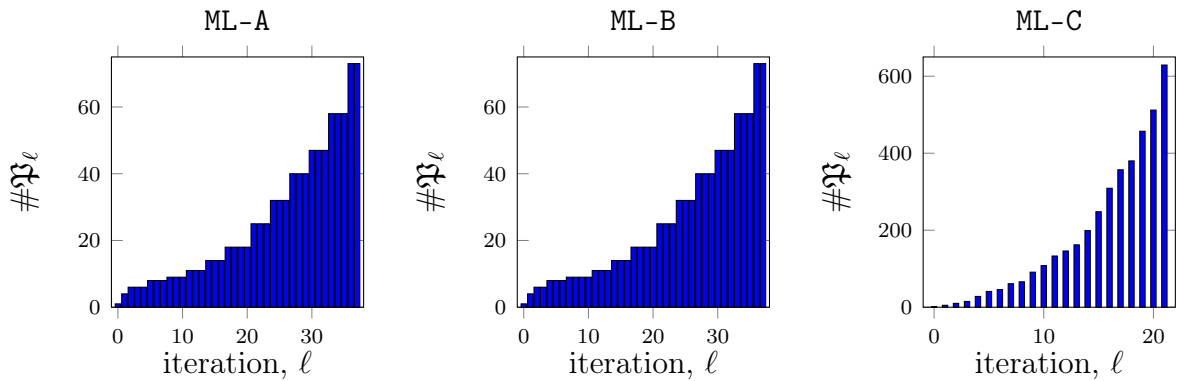
\begin{figure}[ht]
%%%%
\begin{tikzpicture}
\pgfplotstableread{data/pb9-ml-a.dat}{\one}
\begin{axis}[
width = 4.9cm, height=4.9cm,
ybar,
bar width=2pt,
xmin = -1,
xmax = 38,
ymin=0,
ymax=75,
title=\texttt{ML-A},
xlabel = {iteration, $\ell$},
ylabel = {$\#\PPP_\ell$},
]
\addplot[fill=blue] table[x=iter,y=cardP]{\one};
\end{axis}
\end{tikzpicture}
%%%%
\quad
%%%%
\begin{tikzpicture}
\pgfplotstableread{data/pb9-ml-b.dat}{\one}
\begin{axis}[
width = 4.9cm, height=4.9cm,
ybar,
bar width=2pt,
xmin = -1,
xmax = 38,
ymin=0,
ymax=75,
title=\texttt{ML-B},
xlabel = {iteration, $\ell$},
ylabel = {$\#\PPP_\ell$},
]
\addplot[fill=blue] table[x=iter,y=cardP]{\one};
\end{axis}
\end{tikzpicture}
%%%%
\quad
%%%%
\begin{tikzpicture}
\pgfplotstableread{data/pb9-ml-c.dat}{\one}
\begin{axis}[
width = 4.9cm, height=4.9cm,
ybar,
bar width=2pt,
xmin = -1,
xmax = 22,
ymin=0,
ymax=650,
title=\texttt{ML-C},
xlabel = {iteration, $\ell$},
ylabel = {$\#\PPP_\ell$},
]
\addplot[fill=blue]	table[x=iter,y=cardP]{\one};
\end{axis}
\end{tikzpicture}
%%%%
\caption{
Experiments in section~\ref{sec:cookie}:
Evolution of the cardinality of the index set $\PPP_\ell$.
}
\label{fig:pb9:index_set}
\end{figure}
%%%%%%%%%%%%%%%%%%%%

In Table~\ref{tab:pb9}, we show the outputs for the multilevel algorithms.
Each algorithm activates all nine relevant parameters $y_1, \dots, y_9$.
While we do not observe significant differences between \texttt{ML-A} and \texttt{ML-B},
we see that \texttt{ML-C} reaches the prescribed tolerance with
less iterations, a smaller number of degrees of freedom, a richer index set, and a higher polynomial degree
than the two other algorithms
(see also Figure~\ref{fig:pb9:index_set}, where we show the evolution of $\# \PPP_\ell$).
This is again in agreement with the results presented in section~\ref{sec:eigel}.

%%%%%%%%%%%%%%%%%%%%
\begin{figure}[ht]
\centering
%%%
\begin{subfigure}{0.19\textwidth}
\centering
\includegraphics*[width=\textwidth,trim = 105 25 100 25]{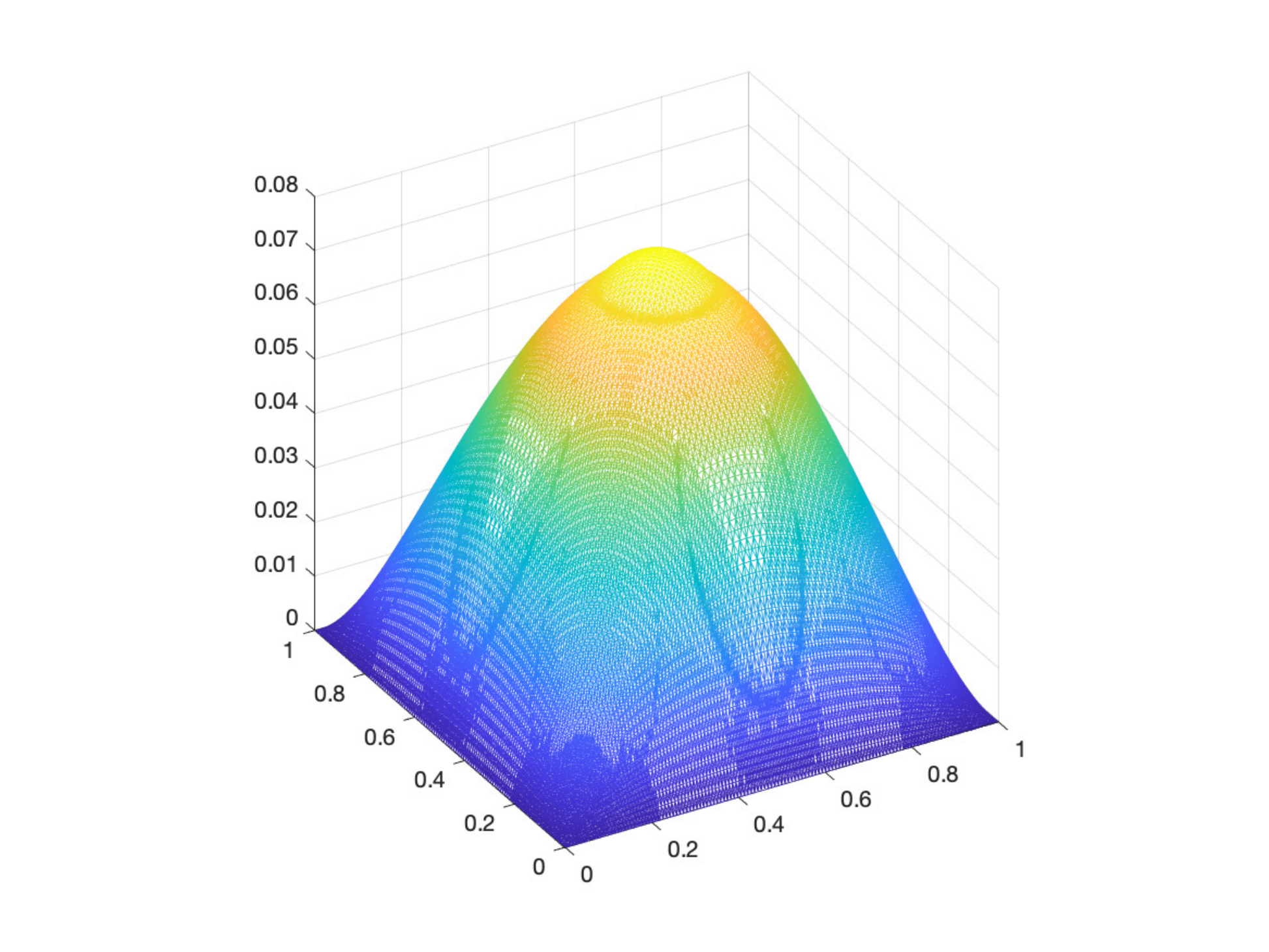}\\
\vspace{1mm}
\includegraphics*[width=\textwidth,trim = 118 45 98 30]{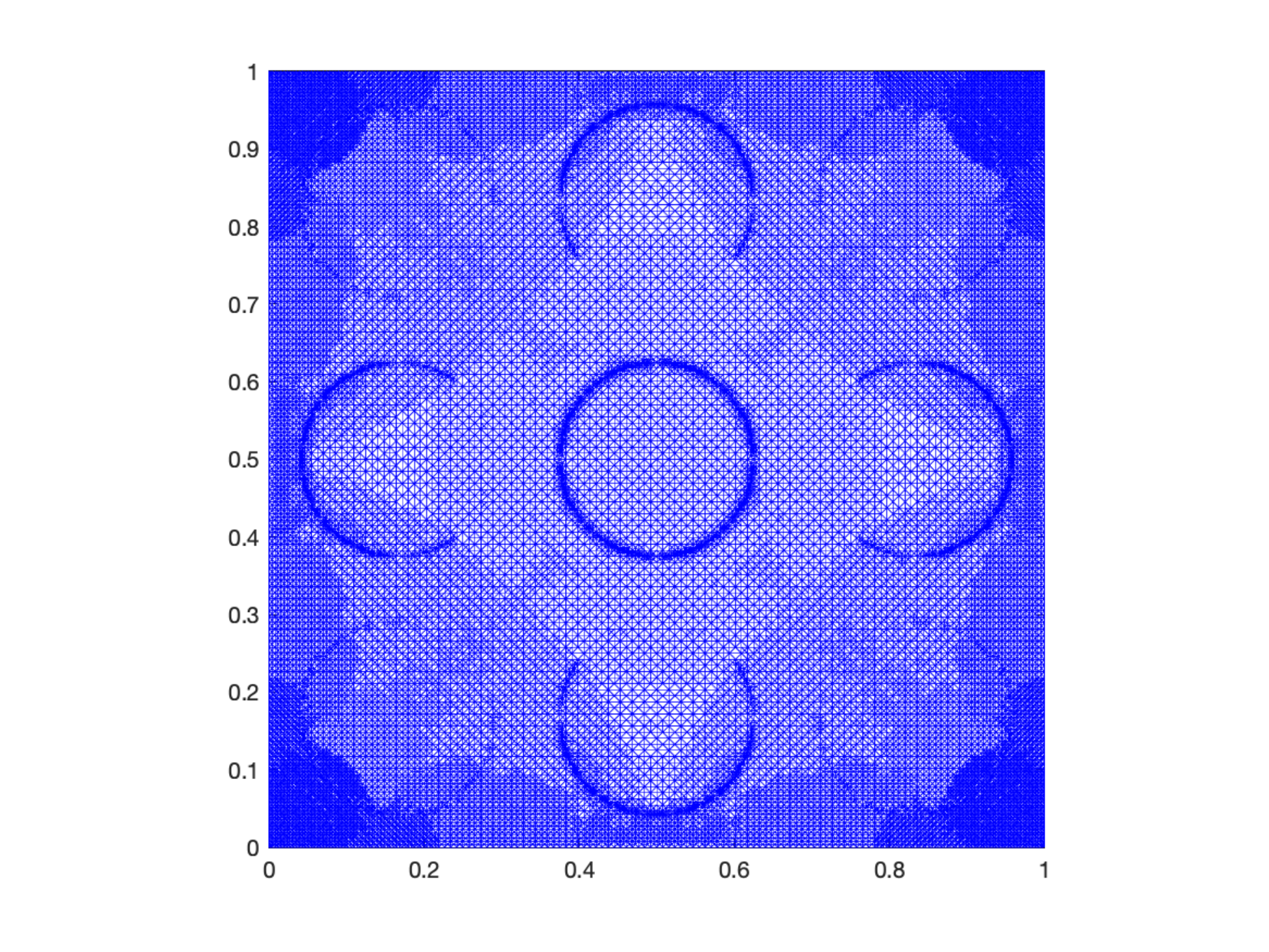}
\caption{$\nu = \0$\\ $\#\TT_{\ell\nu} = \num{84050}$}
\end{subfigure}
%%%
\hfill
%%%
\begin{subfigure}{0.19\textwidth}
\centering
\includegraphics*[width=\textwidth,trim = 105 25 100 25]{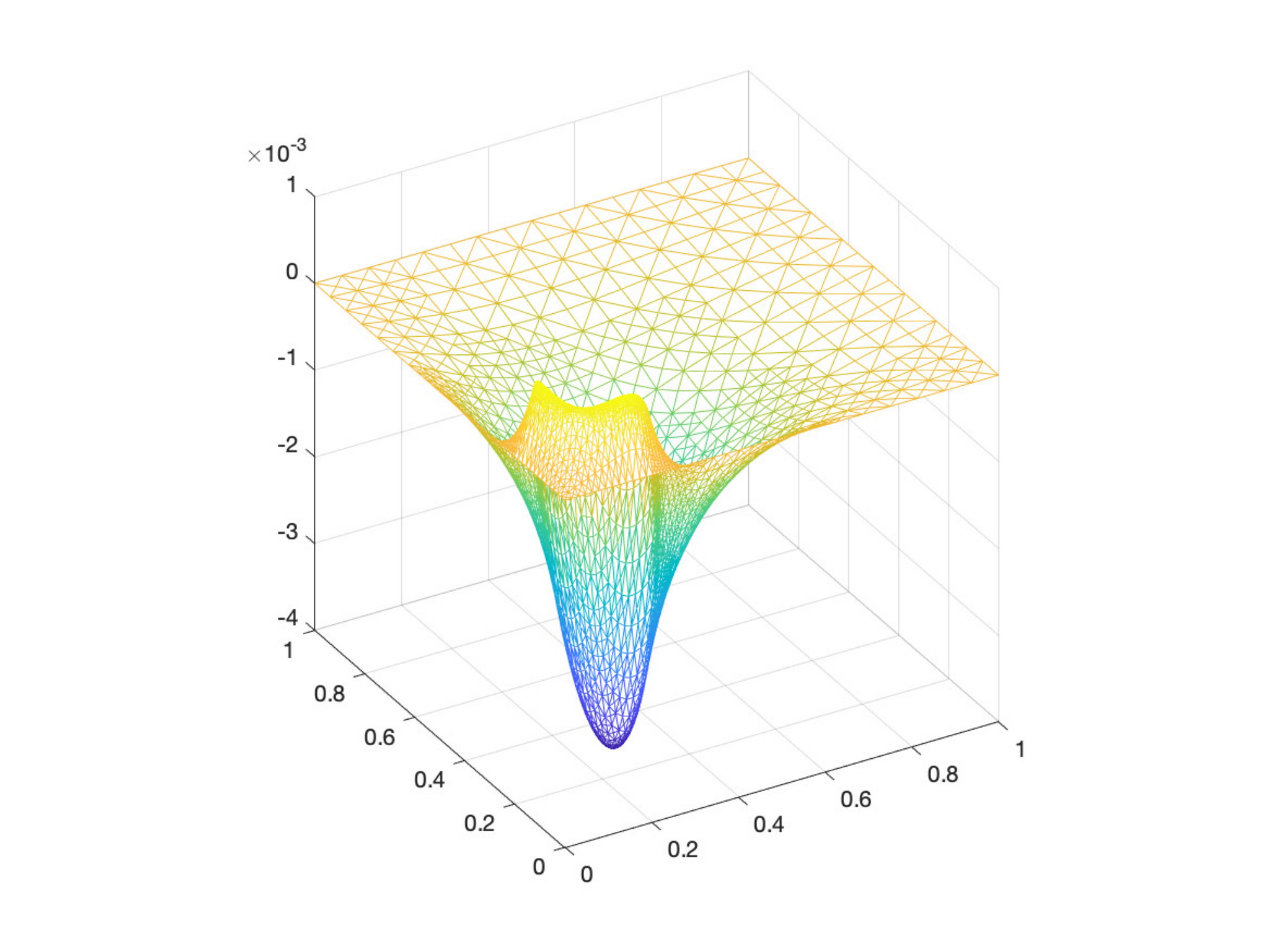}\\
\vspace{1mm}
\includegraphics*[width=\textwidth,trim = 118 45 98 30]{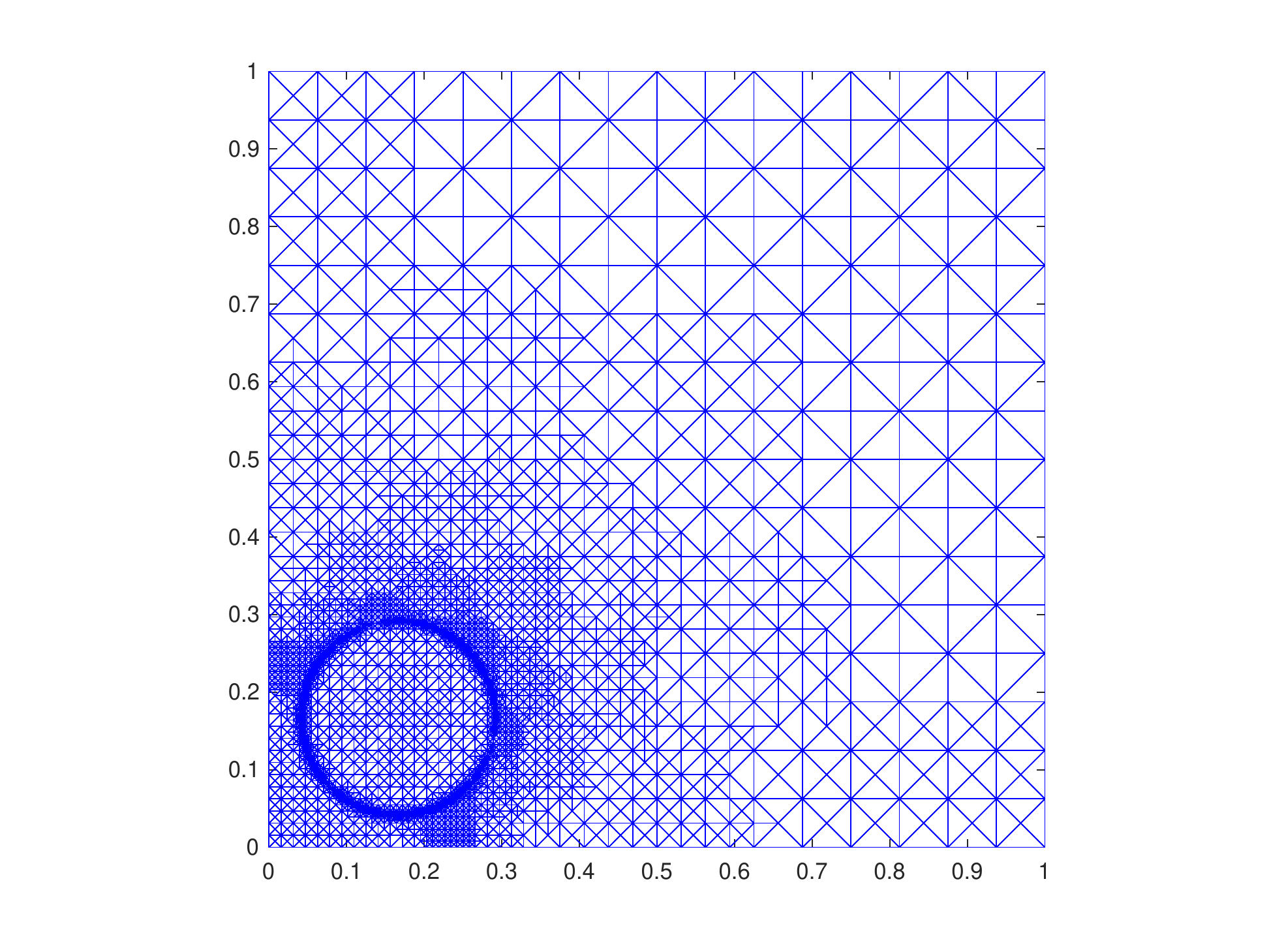}
\caption{$\nu = \eps_1$\\ $\#\TT_{\ell\nu} = \num{10994}$}
\end{subfigure}
%%%
\hfill
%%%
\begin{subfigure}{0.19\textwidth}
\centering
\includegraphics*[width=\textwidth,trim = 105 25 100 25]{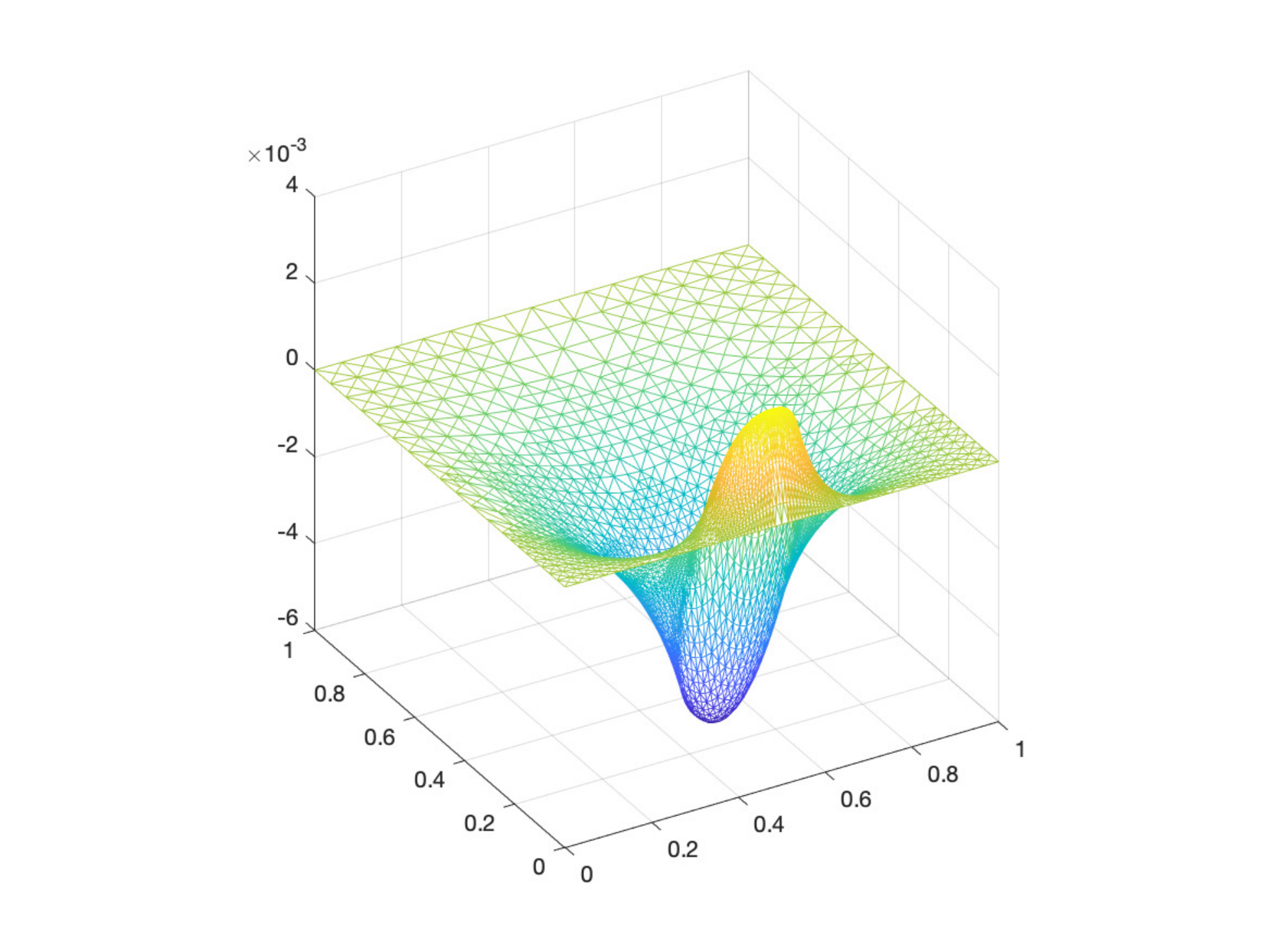}\\
\vspace{1mm}
\includegraphics*[width=\textwidth,trim = 118 45 98 30]{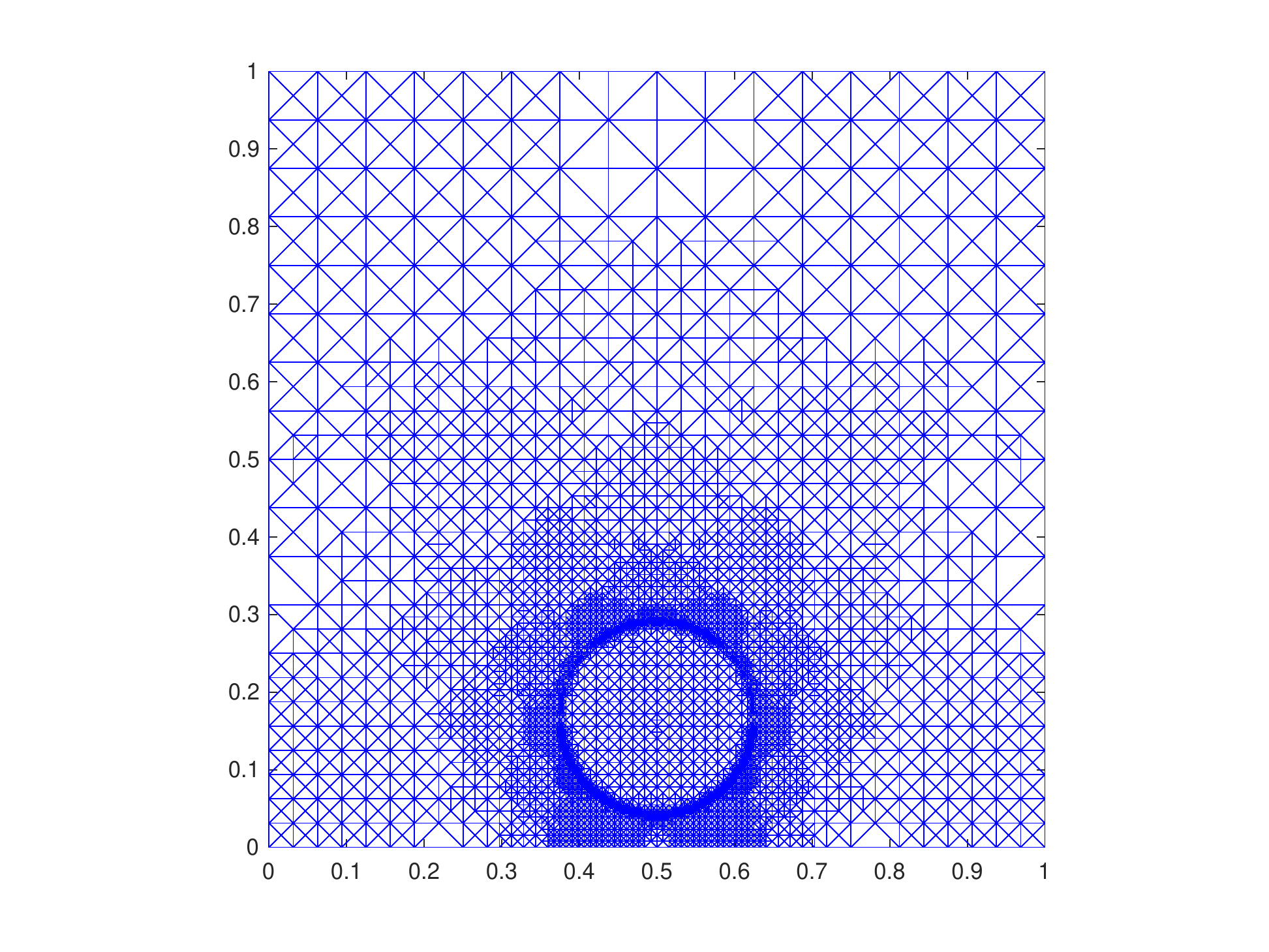}
\caption{$\nu = \eps_2$\\ $\#\TT_{\ell\nu} = \num{16420}$}
\end{subfigure}
%%%
\hfill
%%%
\begin{subfigure}{0.19\textwidth}
\centering
\includegraphics*[width=\textwidth,trim = 105 25 100 25]{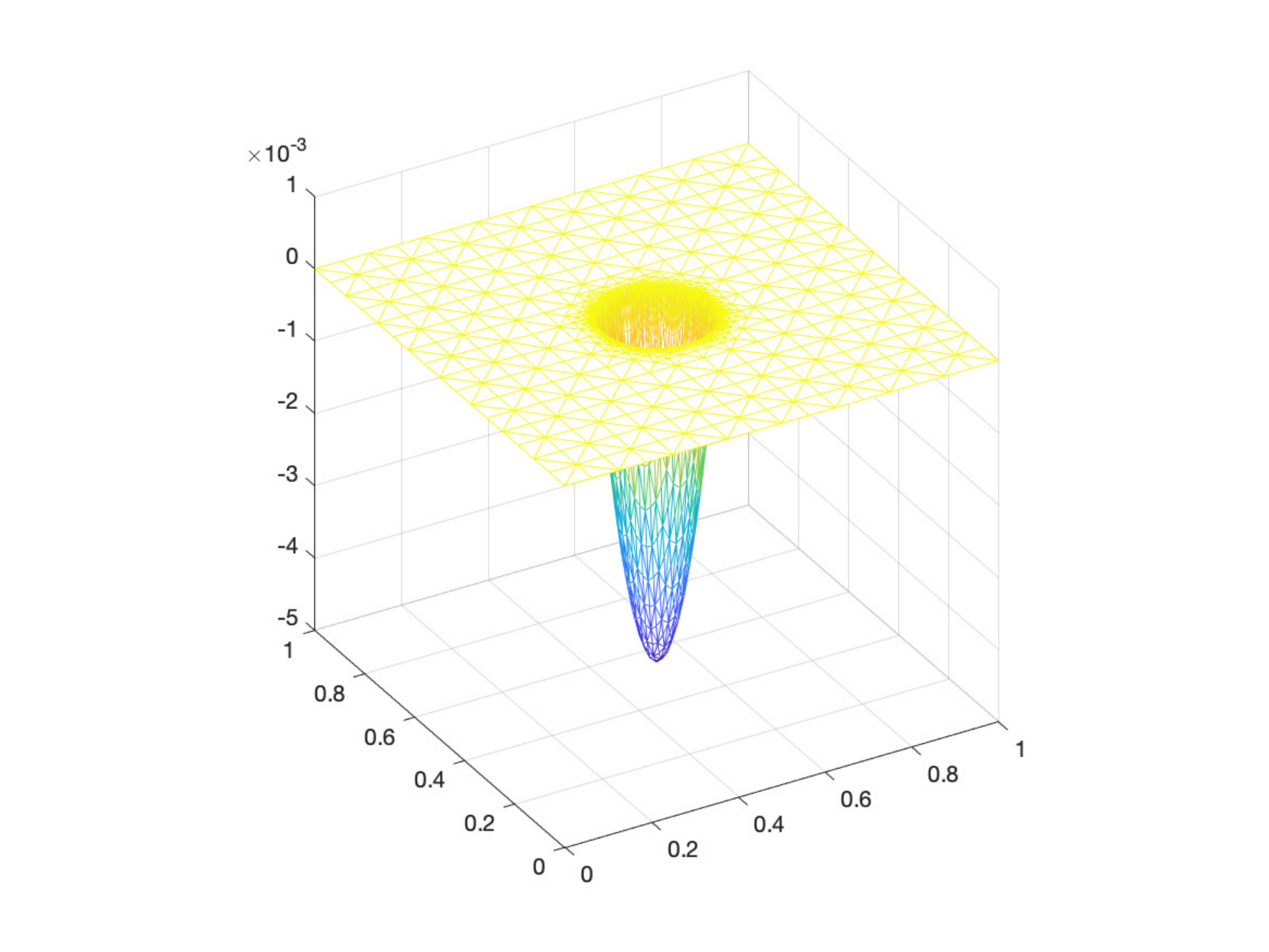}\\
\vspace{1mm}
\includegraphics*[width=\textwidth,trim = 118 45 98 30]{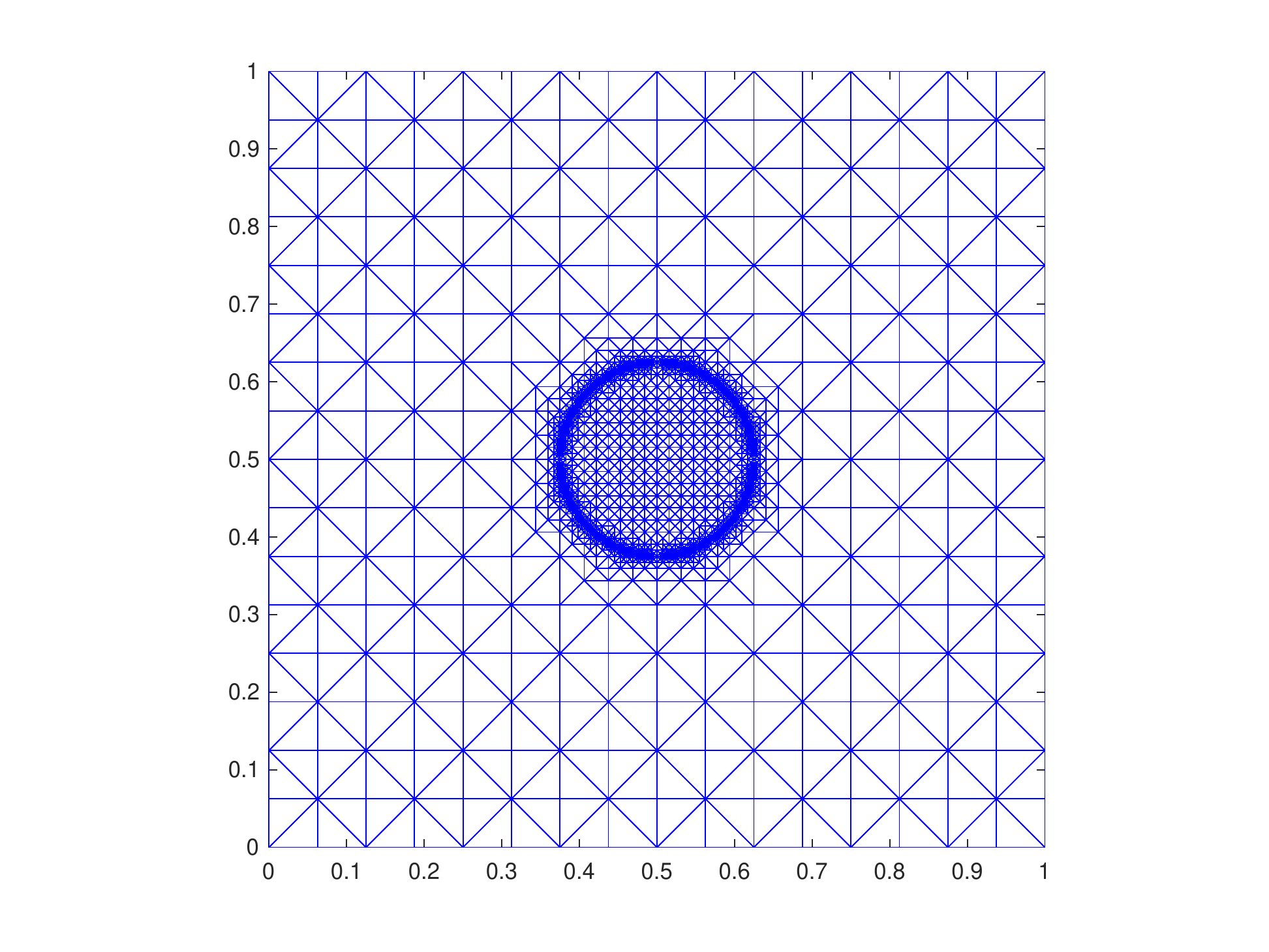}
\caption{$\nu = \eps_5$\\ $\#\TT_{\ell\nu} = \num{9528}$}
\end{subfigure}
%%%
\hfill
%%%
\begin{subfigure}{0.19\textwidth}
\centering
\includegraphics*[width=\textwidth,trim = 105 25 100 25]{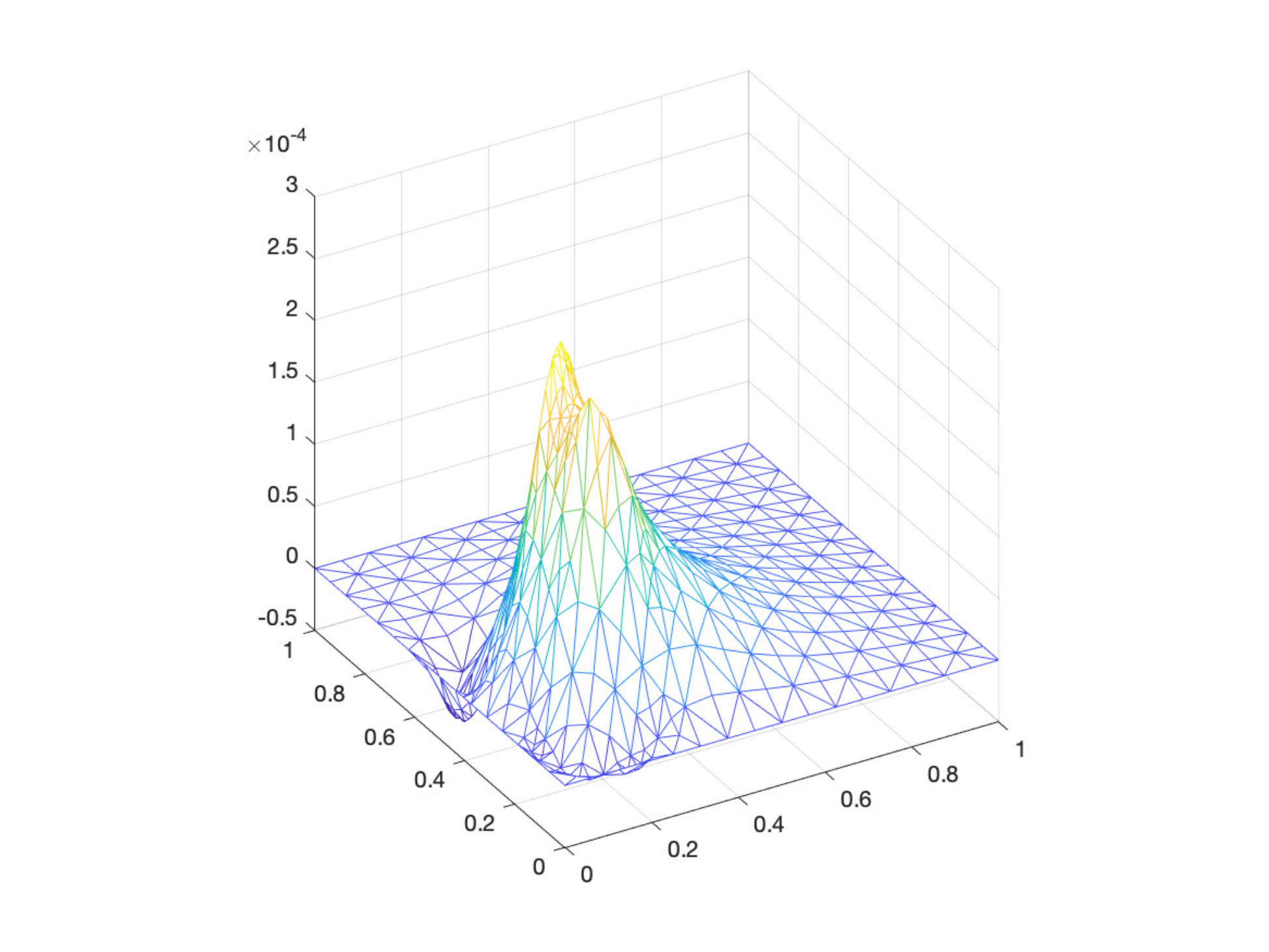}\\
\vspace{1mm}
\includegraphics*[width=\textwidth,trim = 118 45 98 30]{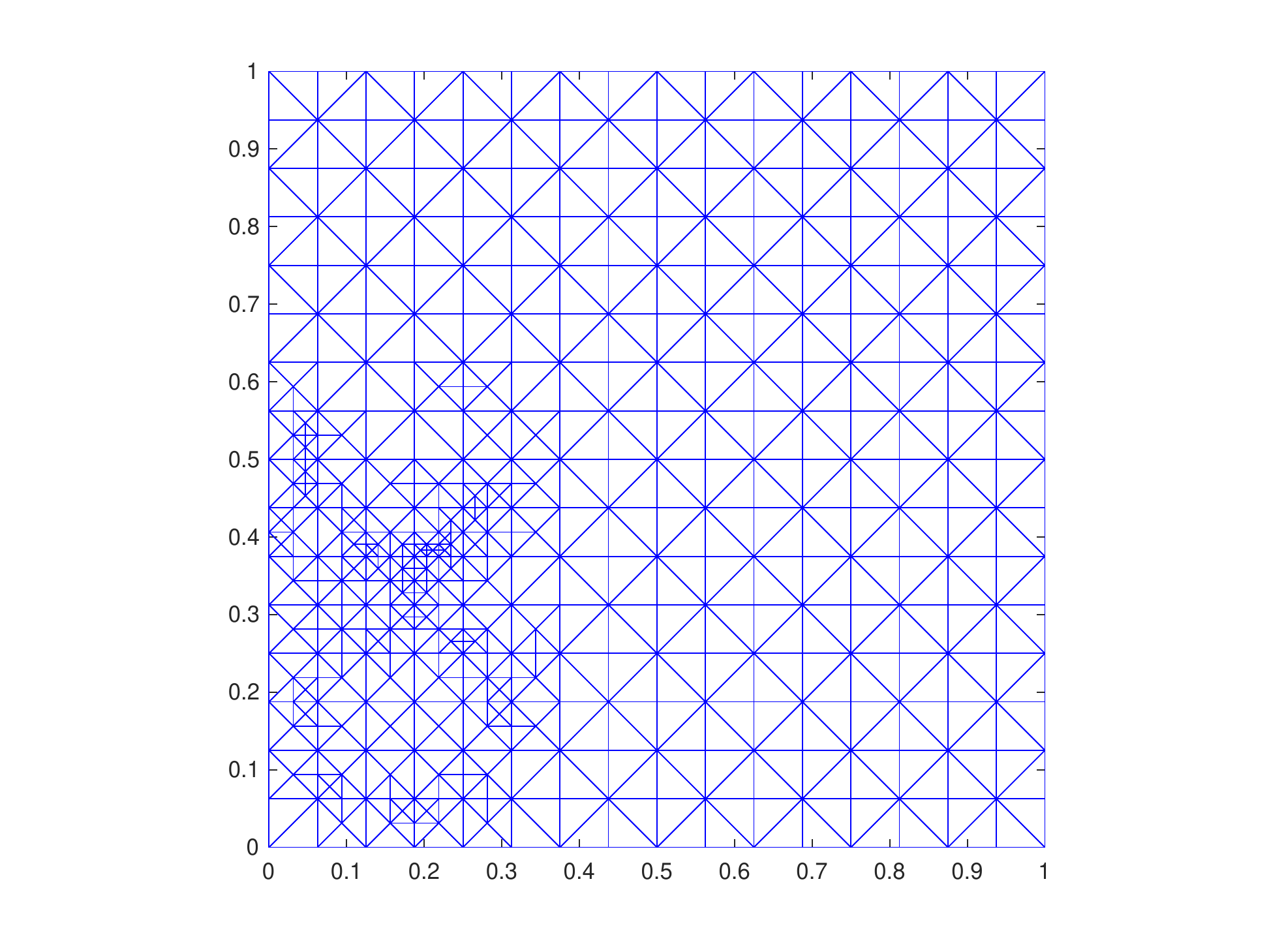}
\caption{$\nu = (1\ 0\ 0\ 1)$\\ $\#\TT_{\ell\nu} = \num{839}$}
\end{subfigure}
%%%
\caption{
Experiments in section~\ref{sec:cookie}:
Coefficients $u_{\ell\nu} \in \X_{\ell\nu} = \SS^1(\TT_{\ell\nu})$
of an intermediate SGFEM approximation ($\ell=16$) generated by \texttt{ML-C} (top plots)
and the associated adaptively refined meshes $\TT_{\ell\nu}$ (bottom plots)
for five indices $\nu \in \PPP_\ell$.
}
\label{fig:pb9:pictures}
\end{figure}
%%%%%%%%%%%%%%%%%%%%

In Figure~\ref{fig:pb9:pictures},
we consider an intermediate SGFEM approximation $\uu_\ell  \in \V_\ell$ generated by~\texttt{ML-C}
($\ell= 16$).
For five indices in $\PPP_\ell$,
namely
$\nu = \0$,
three unit indices $\nu = \eps_1, \eps_2, \eps_5$,
and $\nu = (1\; 0\; 0\; 1)$,
we plot the coefficients $u_{\ell\nu} \in \X_{\ell\nu}$
and the associated adaptively refined meshes $\TT_{\ell\nu}$.
Note that
the coefficient associated with $\nu = \0$ rep\-res\-ents the expectation of the SGFEM approximation.
Looking at the mesh associated with $\nu = \0$,
we observe that the intensity of local mesh refinement at the boundary of each subdomain
reflects the `importance' of the corresponding parameter (cf.\ \eqref{eq:coeff_cookie}).
Moreover, we observe that for each $m=1,2,5$, the subdomain $D_m$ is
identified by the mesh associated with the index $\eps_m$.
In the same way, the mesh associated with $\nu = (1\; 0\; 0\; 1)$
identifies the subdomains $D_1$~and~$D_4$.

Next,
we consider the final index set $\PPP_L$ generated by \texttt{ML-C} ($L=21$)
and assess
the maximum polynomial degree
activated for each parameter $y_m$ ($m=1,\dots,9$):
\begin{equation*}
\max_{\nu \in \PPP_L} \nu_m
=
\begin{cases}
6 & \text{for } m=1,3,7,9,\\
9 & \text{for } m=2,4,6,8,\\
17 & \text{for } m=5.\\
\end{cases}
\end{equation*}
We see that the maximum polynomial degrees assigned to the parameters
mirror the hierarchy of the parameters induced by the coefficients (cf.\ \eqref{eq:coeff_cookie}).
This result, together with those reported in Figure~\ref{fig:pb9:pictures},
illustrate
the capability of our multilevel fully adaptive algorithm
to capture the anisotropy of the inclusions 
and allocate degrees of freedom according to the `importance' of
both the individual parameters and the gPC expansion modes.

%%%%%%%%%%%%%%%%%%%%
% convergence rates
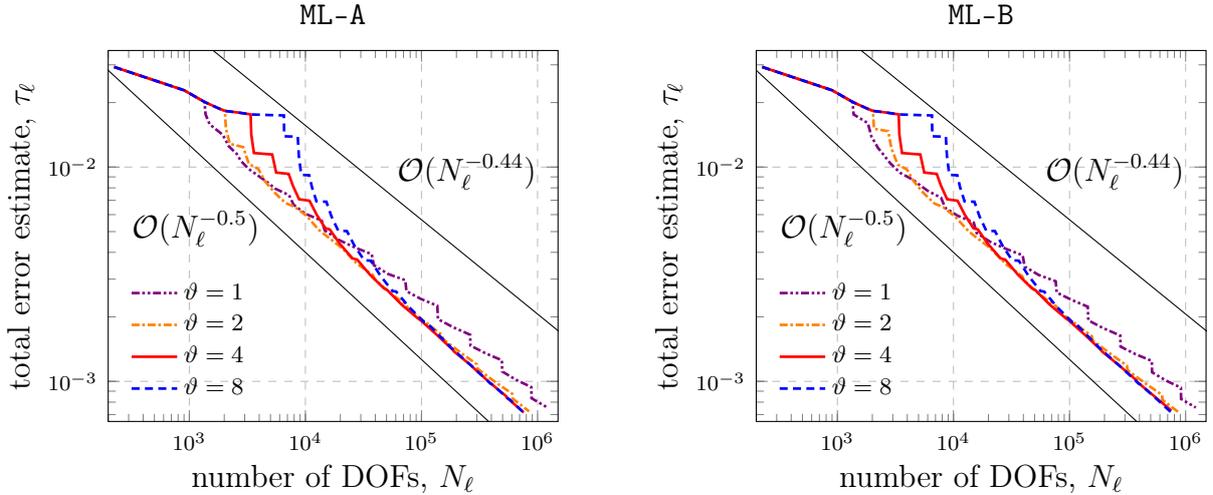
\begin{figure}[ht]
\begin{tikzpicture}
\pgfplotstableread{data/pb9-ml-a.dat}{\one}
\pgfplotstableread{data/pb9-ml-a-vartheta2.dat}{\two}
\pgfplotstableread{data/pb9-ml-a-vartheta4.dat}{\four}
\pgfplotstableread{data/pb9-ml-a-vartheta8.dat}{\eight}
\begin{loglogaxis}
[
title=\texttt{ML-A},
width = 7.5cm, height=6.5cm,						% figure's size
xlabel={number of DOFs, $N_\ell$}, 					% x label
ylabel={total error estimate, $\est_\ell$},				% y label
ymajorgrids=true, xmajorgrids=true, grid style=dashed,	% grid 
xmin = (2.0)*10^(2),
xmax = (1.5)*10^(6),
ymin = (6.5)*10^(-4),
ymax = (3.5)*10^(-2),
legend style={legend pos=south west, legend cell align=left, fill=none, draw=none}
]
\addplot[violet,line width=1.0pt, densely dash dot dot]	table[x=dofs, y=error]{\one};
\addplot[orange,line width=1.0pt, densely dash dot]		table[x=dofs, y=error]{\two};
\addplot[red,line width=1.0pt]						table[x=dofs, y=error]{\four};
\addplot[blue,line width=1.0pt, densely dashed]			table[x=dofs, y=error]{\eight};
\addplot[black,solid,domain=10^(2.2):10^(6.8)] { 0.9*x^(-0.44) };
\node at (axis cs:5e4,7.0e-3) [anchor=south west] {$\mathcal{O}(N_\ell^{-0.44})$};
\addplot[black,solid,domain=10^(2.2):10^(6.0)] { 0.4*x^(-0.5) };
\node at (axis cs:0.5e4,7.0e-3) [anchor=north east] {$\mathcal{O}(N_\ell^{-0.5})$};
\legend{
$\vartheta= 1$,
$\vartheta= 2$,
$\vartheta= 4$,
$\vartheta= 8$,
}
\end{loglogaxis}
\end{tikzpicture}
%%%
\hfill
%%%
\begin{tikzpicture}
\pgfplotstableread{data/pb9-ml-b.dat}{\one}
\pgfplotstableread{data/pb9-ml-b-vartheta2.dat}{\two}
\pgfplotstableread{data/pb9-ml-b-vartheta4.dat}{\four}
\pgfplotstableread{data/pb9-ml-b-vartheta8.dat}{\eight}
\begin{loglogaxis}
[
title=\texttt{ML-B},
width = 7.5cm, height=6.5cm,						% figure's size
xlabel={number of DOFs, $N_\ell$}, 					% x label
ylabel={total error estimate, $\est_\ell$},				% y label
ymajorgrids=true, xmajorgrids=true, grid style=dashed,	% grid 
xmin = (2.0)*10^(2),
xmax = (1.5)*10^(6),
ymin = (6.5)*10^(-4),
ymax = (3.5)*10^(-2),
legend style={legend pos=south west, legend cell align=left, fill=none, draw=none}
]
\addplot[violet,line width=1.0pt, densely dash dot dot]	table[x=dofs, y=error]{\one};
\addplot[orange,line width=1.0pt, densely dash dot]		table[x=dofs, y=error]{\two};
\addplot[red,line width=1.0pt]						table[x=dofs, y=error]{\four};
\addplot[blue,line width=1.0pt, densely dashed]			table[x=dofs, y=error]{\eight};
\addplot[black,solid,domain=10^(2.2):10^(6.8)] { 0.9*x^(-0.44) };
\node at (axis cs:5e4,7.0e-3) [anchor=south west] {$\mathcal{O}(N_\ell^{-0.44})$};
\addplot[black,solid,domain=10^(2.2):10^(6.0)] { 0.4*x^(-0.5) };
\node at (axis cs:0.5e4,7.0e-3) [anchor=north east] {$\mathcal{O}(N_\ell^{-0.5})$};
\legend{
$\vartheta= 1$,
$\vartheta= 2$,
$\vartheta= 4$,
$\vartheta= 8$,
}
\end{loglogaxis}
\end{tikzpicture}
%%%%
\caption{
Experiments in section~\ref{sec:cookie}:
Total error estimates $\est_\ell$ versus the number of degrees of freedom $N_\ell$
for all \texttt{ML-A} and \texttt{ML-B} and different values of $\vartheta$.
}
\label{fig:pb9-rates-vartheta}
\end{figure}
%%%%%%%%%%%%%%%%%%%%

In our final experiment,
we investigate whether
appropriately selecting 
the parameter $\vartheta >0$,
which modulates the choice between mesh refinement and parametric enrichment,
can lead to a decay of the error estimate
with fully optimal rate~$\mathcal{O}(N_\ell^{-0.5})$
also for \texttt{ML-A} and \texttt{ML-B}.
In Figure~\ref{fig:pb9-rates-vartheta},
we compare the decay of the error estimates $\est_\ell$
obtained for $\vartheta=1,2,4,8$.
We observe that each choice $\vartheta >1$
leads to a significant improvement of the convergence rate,
which is optimal for $\vartheta=4,8$.
This behavior is in agreement with the results obtained for \texttt{ML-C}
presented in Figure~\ref{pb5:multilevel_decay} and Figure~\ref{fig:pb9:multilevel_decay},
where we see that the combined marking strategy
automatically favors parametric enrichments over spatial refinements.

%%%%%%%%%%%%%%%%%%%%
\subsection{Conclusions on numerical experiments} \label{sec:conclusions}
%%%%%%%%%%%%%%%%%%%%
Overall, the reported results of numerical experiments indicate that:

$\bullet$
the proposed error estimation strategy in the context of the multilevel  SGFEM is as effective as the error estimators
for single-level and multilevel SGFEMs investigated in~\cite{bprr18++} and~\cite{cpb18+}, respectively;

$\bullet$
for the considered test problems, adaptive multilevel SGFEM outperforms its single-level counterpart in terms of
convergence rates and in terms of the number of degrees of freedom required to reach the prescribed tolerance;
this is a consequence of a greater flexibility of the multilevel SGFEM in allocating degrees of freedom compared to the single-level SGFEM;

$\bullet$
the error estimates for multilevel SFGEM approximations generated by the algorithm with combined marking/enrichment
(Algorithm~\ref{algorithm}.\ref{marking:C}) decay with the optimal rate;
on the other hand, the optimal decay rate for approximations generated by the algorithms with separate marking/enrichment
(Algorithms~\ref{algorithm}.\ref{marking:A} and~\ref{algorithm}.\ref{marking:B}) can be ensured by prioritizing
parametric enrichments over spatial refinements
(by setting $\vartheta > 1$ in the associated marking criterion);

$\bullet$
all adaptive algorithms proposed in this paper are effective in identifying the most `important' modes in the gPC expansion
of the solution to the parametric problem,
including the case of infinitely many parameters (as in the test problem in~\S\ref{sec:eigel})
and the case when `importance' of parameters cannot be directly inferred from
the ordering of terms in the coefficient expansion (as in the test problem in~\S\ref{sec:cookie}).

The application of our algorithms to other classes of parametric PDE problems
(e.g., the problems with non-affine coefficient expansions
in terms of a finite number of bounded parameters) is possible
(see, e.g.,~\cite{bx2020} for adaptive single-level SGFEM).
However, for more challenging  problems (e.g., the problems with lognormal parametric coefficients),
the efficiency of the algorithms will significantly benefit
from combining adaptivity with compression techniques (e.g., low-rank tensor methods~\cite{dklm2015}),
as developed recently in~\cite{emps2020} in the context of the single-level SGFEM.
The extension of this methodology to adaptive multilevel SGFEM approximations will be considered in future~research.

%%%%%%%%%%%%%%%%%%%%
\bibliographystyle{alpha}
\bibliography{literature}
%%%%%%%%%%%%%%%%%%%%

\end{document}